\newcommand*\circled[1]{\tikz[baseline=(char.base)]  {\node[shape=circle,draw, inner sep=0.5pt] (char) {#1};}}
\newcommand*\circledD[1]{\tikz[baseline=(char.base)]{\node[shape=circle,draw,dashed, inner sep=0.5pt] (char) {#1};}}
\newtheorem{theorem}{Theorem}%[section]
\newtheorem{corollary}[theorem]{Corollary}
\newtheorem{lemma}[theorem]{Lemma}
\theoremstyle{definition}
\newtheorem{definition}[theorem]{Definition}
\newtheorem{conjecture}[theorem]{Conjecture}
\newcommand\tikznode[3][]%
\tikzset{>=stealth}
\title{A simplification of the C-realizability criterion for the  Nonnegative Inverse Eigenvalue Problem for integers}
\author{Alberto Borobia\thanks{Departamento de Matem\'{a}ticas, Universidad Nacional de Educaci\'on a Distancia (UNED), email: {\tt aborobia@mat.uned.es}}\quad and\quad Roberto Canogar\thanks{Departamento de Matem\'{a}ticas, Universidad Nacional de Educaci\'on a Distancia (UNED), email: {\tt rcanogar@mat.uned.es}}}
\date{\today}
\begin{document}

\maketitle

\begin{abstract}
    A multiset $\Lambda=\{\lambda_1,\ldots,\lambda_n\}$ of complex  numbers is said to be realizable whenever there exists a nonnegative matrix of order $n$ with spectrum $\Lambda$.  One of the broadest criterion that guarantees realizability is the $C-$realizability. It says that $\Lambda$, with real numbers, is  $C-$realizable if it can be obtained starting from $n$ basic multisets $\{0\},\ldots,\{0\}$ by  successively applying any finite number of times any of the following  rules:  (a) join two of the multisets; (b) increase by $\epsilon>0$ the Perron root of one of the multisets;  (c) increase  by $\epsilon>0$ the Perron root of one of the multisets and simultaneously  increase or decrease by $\epsilon$ any other value of the same multiset.

If in the above rules we restrict $\epsilon$ to be an integer number, then we will always obtain  multisets of integers. And for integers, this work proves that the collection of original rules (a)--(c) is equivalent to a simplified collection: (a) join two of the multisets; (b') increase by 1 the Perron root of one of the multisets ; and (c') increase by 1 the Perron root of one of the multisets and simultaneously decrease by 1 a non-positive value of the same multiset. 
This simplification  is useful if we want to decide if a given  multiset of integers is $C-$realizable or not.

\end{abstract}

\noindent{\bf Keywords:} Nonnegative matrices; Nonnegative inverse eigenvalue problem; realizable spectra; realizability criteria; $C-$realizability.

\noindent{\bf AMS Subject Classification:} 15A18, 15A29.

\section{Introduction}
A matrix is nonnegative if all its entries are nonnegative numbers. A multiset  $\Lambda=\{\lambda_1,\ldots,\lambda_n\}$ of complex numbers is said to be realizable if there exists a nonnegative matrix with   spectrum $\Lambda$. In 1949 Suleimanova posed the nonnegative inverse eigenvalue problem (NIEP), which is  the problem of determining which multisets of complex numbers are realizable.  A thorough survey for the NIEP was provided by Johnson et al.~\cite{jmpp}. 

The  version for reals of the NIEP, the RNIEP,  asks for realizable multisets of real numbers. The RNIEP has a broad literature, where special attention has been focused on obtaining criteria of realizability. Here we are interested in one of such criteria, namely the $C-$realizability (see Borobia, Moro and Soto~\cite{bms2004,bms2008}). First, we give three classical results (the first and second are well known, and the third is  due to Guo).
\begin{theorem}\label{join}
   Let $\Lambda$ and $\Gamma$ be two realizable multisets of $\mathbb{C}$, then $\Lambda\cup \Gamma$ is realizable.  
\end{theorem}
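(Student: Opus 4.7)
The plan is to use the standard block-diagonal construction. Suppose $A$ is a nonnegative matrix of order $|\Lambda|$ with spectrum $\Lambda$ and $B$ is a nonnegative matrix of order $|\Gamma|$ with spectrum $\Gamma$; these exist by hypothesis. I would form the block-diagonal matrix
\[
   M \;=\; \begin{pmatrix} A & 0 \\ 0 & B \end{pmatrix}.
\]
All entries of $M$ are entries of $A$ or $B$ (or zeros), so $M$ is nonnegative.

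Next I would verify that the spectrum of $M$ is exactly $\Lambda \cup \Gamma$ as a multiset. This follows from the block structure of the characteristic polynomial: $\det(xI - M) = \det(xI - A)\det(xI - B)$, so the eigenvalues of $M$ (with algebraic multiplicity) are the union of the eigenvalues of $A$ and of $B$. Therefore $M$ realizes $\Lambda \cup \Gamma$.

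There is essentially no obstacle here; the whole argument rests on the fact that eigenvalues (counted with multiplicity) behave additively under direct sums and that direct sums preserve nonnegativity. The only minor point worth noting explicitly is that the union is taken as a multiset, which matches precisely the multiplicative factorization of characteristic polynomials.
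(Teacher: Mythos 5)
Your proof is correct and is the standard argument; the paper states this result as a well-known classical fact without including a proof, and the block-diagonal construction with the factorization $\det(xI-M)=\det(xI-A)\det(xI-B)$ is exactly the intended justification.
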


\begin{theorem}\label{1+}
   Let $\{\lambda_1, \ldots,\lambda_n\}$  be a realizable multiset of $\mathbb{C}$ whose   Perron root is $\lambda_1$. Then for any $\epsilon>0$ the multiset $\{\lambda_1+\epsilon,\lambda_2,\ldots,\lambda_n\}$ is realizable .  
\end{theorem}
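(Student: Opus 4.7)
The plan is to deduce this from Brauer's rank-one perturbation lemma. Let $A$ be a nonnegative matrix realizing $\{\lambda_1,\ldots,\lambda_n\}$. By the Perron--Frobenius theorem, $A$ admits a nonnegative right eigenvector $v\geq 0$, $v\neq 0$, satisfying $Av=\lambda_1 v$. I would pick an index $i$ with $v_i>0$ and set $q=(\epsilon/v_i)\,e_i$, where $e_i$ denotes the $i$-th canonical basis vector. Then $q\geq 0$ componentwise and $q^{T}v=\epsilon$.

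Next I would consider $B=A+vq^{T}$. As a sum of two nonnegative matrices, $B$ is itself nonnegative. What remains is to show that the spectrum of $B$ equals $\{\lambda_1+\epsilon,\lambda_2,\ldots,\lambda_n\}$. This is a direct computation of the characteristic polynomial via the matrix determinant lemma: for $x$ not an eigenvalue of $A$,
$$\det(xI-B)=\det(xI-A)\bigl(1-q^{T}(xI-A)^{-1}v\bigr).$$
Since $v$ is an eigenvector of $A$ for $\lambda_1$, we have $(xI-A)^{-1}v=v/(x-\lambda_1)$, so the right-hand side equals $\det(xI-A)\cdot(x-\lambda_1-\epsilon)/(x-\lambda_1)$. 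Cancelling the factor $(x-\lambda_1)$ of $\det(xI-A)$ yields $(x-\lambda_1-\epsilon)\prod_{k=2}^{n}(x-\lambda_k)$, and by continuity in $x$ this identity extends to all $x$.

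There is no real obstacle here. The two small technicalities to address are that Perron--Frobenius produces a nonnegative eigenvector for the spectral radius in full generality (no irreducibility hypothesis is needed), and that the polynomial identity above is valid in $\mathbb{C}[x]$, which it is since both sides agree on an infinite set. Alternatively, one could bypass the determinant lemma entirely and work inside a Jordan basis of $A$: the perturbation $vq^{T}$ alters only the eigendirection generated by $v$, shifting $\lambda_1$ to $\lambda_1+\epsilon$ while leaving the remaining Jordan structure intact, since $u^{T}v=0$ for every left generalized eigenvector $u$ attached to an eigenvalue other than $\lambda_1$.
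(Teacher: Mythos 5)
Your proof is correct. The paper itself gives no argument for this statement: it is presented as one of the ``well known'' classical results underpinning $C$-realizability, so there is nothing in the text to compare against line by line. What you have written is the standard proof, namely Brauer's rank-one perturbation theorem specialized to a nonnegative perturbation: the Perron--Frobenius theorem supplies a nonnegative eigenvector $v$ for the spectral radius without any irreducibility hypothesis, the choice $q=(\epsilon/v_i)e_i$ keeps $B=A+vq^{T}$ entrywise nonnegative, and the matrix determinant lemma computation correctly yields $\det(xI-B)=(x-\lambda_1-\epsilon)\prod_{k=2}^{n}(x-\lambda_k)$, with the extension from the cofinite set of non-eigenvalues to all of $\mathbb{C}$ justified since both sides are polynomials. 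One minor remark: your closing Jordan-basis alternative is looser than the main argument --- the perturbation can in principle alter the Jordan structure within the $\lambda_1$-eigenspace, and the orthogonality claim only controls the spectrum, not the full canonical form --- but since the theorem concerns only the spectrum and your determinant computation is already complete, nothing is at stake.
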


\begin{theorem}\label{guo_th} \cite{guo1997}
Let $\{\lambda_1, \ldots,\lambda_n\}$  be a realizable multiset of $\mathbb{C}$ whose   Perron root is $\lambda_1$ and let $\lambda_2$ be real. Then for any   $\epsilon >0$, the multiset $\{\lambda_1+ \epsilon,\lambda_2\pm \epsilon, \lambda_3 ,\ldots,\lambda_n\}$ is realizable.
\end{theorem}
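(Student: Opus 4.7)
The plan is to start from a nonnegative matrix $A$ realizing $\{\lambda_1,\ldots,\lambda_n\}$ and construct a rank-two perturbation $A'=A+P$ that adjusts the spectrum exactly in the prescribed way. Since $\lambda_1$ is the Perron root, Perron--Frobenius provides a nonnegative right eigenvector $x\ge 0$ with $Ax=\lambda_1 x$; since $\lambda_2$ is assumed real, one may choose a real right eigenvector $y$ with $Ay=\lambda_2 y$. These two vectors are the building blocks of the perturbation.

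The natural candidate is a rank-two correction
\[
P\;=\;\epsilon\, x v^T \;\pm\; \epsilon\, y w^T,
\]
where the vectors $v,w$ are chosen to satisfy the biorthogonality relations $v^Tx=1$, $v^Ty=0$, $w^Tx=0$, $w^Ty=1$, and additionally to annihilate an $A$-invariant complement of $\mathrm{span}\{x,y\}$. A direct computation then yields $(A+P)x=(\lambda_1+\epsilon)x$ and $(A+P)y=(\lambda_2\pm\epsilon)y$, while $A+P$ coincides with $A$ on the chosen complement, so that the remaining eigenvalues $\lambda_3,\ldots,\lambda_n$ are preserved. The spectrum of $A+P$ is therefore $\{\lambda_1+\epsilon,\lambda_2\pm\epsilon,\lambda_3,\ldots,\lambda_n\}$, as required. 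A rank-one version of this same idea (with $w=0$) already suffices to reprove Theorem~\ref{1+} and can be used as a warm-up.

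The hard part is to ensure that $A+P$ is \emph{nonnegative}. The summand $\epsilon\, x v^T$ is easy to control since $x\ge 0$; the problematic piece is $\pm\epsilon\, y w^T$ because the eigenvector $y$ associated with $\lambda_2$ generally has negative entries. The key observation to exploit is the considerable freedom one still has in choosing $v$ (an $(n-2)$-parameter family, once the conditions $v^Tx=1$ and $v^Ty=0$ are imposed) and, in the irreducible case, the fact that $x$ is entrywise strictly positive. By inflating $v$ with large nonnegative entries, the positive contribution $\epsilon\, x v^T$ can be made to dominate the negative entries of $\pm\epsilon\, y w^T$ together with the nonnegative matrix $A$, componentwise. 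Reconciling these positivity inequalities with the biorthogonality constraints is, in my view, the main technical obstacle; the reducible case would be handled by first restricting to the irreducible block carrying the Perron root, or by embedding $A$ into a slightly enlarged irreducible matrix and transferring the perturbation back.
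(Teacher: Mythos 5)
First, a point of orientation: the paper does not prove Theorem~\ref{guo_th} at all --- it is Guo's theorem, quoted from \cite{guo1997} --- so there is no in-paper proof to compare against; your proposal must stand on its own. It does not, and the failure is exactly at the point you flag as ``the main technical obstacle,'' except that the mechanism you propose for overcoming it is internally inconsistent rather than merely incomplete. Once you require $P$ to annihilate an $A$-invariant complement $W$ of $\mathrm{span}\{x,y\}$ (which is what preserves $\lambda_3,\dots,\lambda_n$), the vector $v$ must satisfy $v^Tx=1$, $v^Ty=0$ and $v^TW=0$: that is $2+(n-2)=n$ linear conditions on $n$ unknowns, so generically $v$ (and likewise $w$) is \emph{unique}. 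The ``$(n-2)$-parameter family'' of choices for $v$ does not exist --- you counted only two of the constraints. Moreover, ``inflating $v$ with large nonnegative entries'' is incompatible with the normalization $v^Tx=1$ when $x>0$ entrywise, which is precisely the irreducible case you appeal to. A concrete failure: take $A=J_4-I_4$ (off-diagonal entries $1$, zero diagonal), with spectrum $\{3,-1,-1,-1\}$, $x=(1,1,1,1)^T$, $\lambda_2=-1$, $y=(1,-1,0,0)^T$, and $W=\mathrm{span}\{(1,0,-1,0)^T,(1,0,0,-1)^T\}$. The constraints force $v=\tfrac14(1,1,1,1)^T$ and $w=\tfrac14(1,-3,1,1)^T$, and then
\begin{equation*}
\bigl(A+\epsilon xv^T-\epsilon yw^T\bigr)_{22}=0+\tfrac{\epsilon}{4}-\tfrac{3\epsilon}{4}=-\tfrac{\epsilon}{2}<0
\end{equation*}
for every $\epsilon>0$. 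Some residual freedom remains in choosing $y$ and $W$ inside the $(-1)$-eigenspace, but you give no argument that a good choice exists, and none of the freedom your argument actually relies on is available. The reducible case is also not dispatched as easily as you suggest: for $\{2,-1\}\cup\{1\}$ the eigenvalue $\lambda_2=1$ lives in a block not containing the Perron root, so restricting to the Perron block loses $\lambda_2$, while embedding $A$ in an irreducible matrix changes the spectrum.

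For contrast, Guo's actual proof is not a direct perturbation of a realizing matrix. He fixes $\lambda_3,\dots,\lambda_n$, considers $g(t)=\min\{\lambda_1 : \{\lambda_1,t,\lambda_3,\dots,\lambda_n\}\ \text{is realizable}\}$ (the minimum exists by a compactness argument), and shows that $g$ is $1$-Lipschitz in $t$; together with Theorem~\ref{1+} this gives the statement. The rank-one case you mention (Brauer's theorem, which does prove Theorem~\ref{1+}) is genuinely easy precisely because $x\ge 0$ allows a nonnegative $v$ with $v^Tx=\epsilon$; the rank-two analogue does not inherit this, because the eigenvector $y$ attached to $\lambda_2$ necessarily has entries of both signs. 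If you want a constructive proof, expect to need something of the strength of Guo's Lipschitz argument rather than a fixed two-dimensional spectral surgery.
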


Based on these theorems, a new criterion for the RNIEP was introduced in \cite{bms2008}. Theorems~\ref{join},~\ref{1+}, and~\ref{guo_th} can be applied to realizable multisets $\Lambda$ and $\Gamma$  of $\mathbb{C}$. But note that whenever $\Lambda$ and $\Gamma$ are multisets of $\mathbb{R}$, then the result of the rules is a multiset of $\mathbb{R}$ as well. In the next definition we start with the collection of realizable real multisets $\{0\}, \ldots, \{0\}$, so we will always obtain real multisets.

\begin{definition} \label{C_realizability_2}
A multiset $\{\lambda_1,\ldots,\lambda_n\}$ of $\mathbb{R}$ is  {\bf $C-$realizable}  if it can be reached starting from the $n$ realizable sets $\{0\}, \ldots, \{0\}$ and successively applying any finite number of times any of the  Theorems~\ref{join},~\ref{1+}, and~\ref{guo_th}.
\end{definition}

And the main result in \cite{bms2008} reads as follows.

\begin{theorem} \label{th_original} 
    If  a multiset   of  $\mathbb{R}$ is $C-$realizable, then it  is realizable.
\end{theorem}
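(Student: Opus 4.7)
The plan is to argue by induction on the number of rule applications used in a $C$-realization. Let $\Lambda$ be $C$-realizable; by Definition~\ref{C_realizability_2}, there is a finite sequence of steps that starts from the $n$ multisets $\{0\},\ldots,\{0\}$ and, at each step, replaces one or two of the current multisets by a new multiset produced by one of Theorems~\ref{join},~\ref{1+}, or~\ref{guo_th}, eventually yielding $\Lambda$ as the only remaining multiset. I would prove the stronger invariant that after every step all the multisets currently in play are realizable; the conclusion for $\Lambda$ is then immediate.

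The base case is trivial: the singleton $\{0\}$ is the spectrum of the $1\times 1$ zero matrix $[0]$, which is nonnegative, so each of the initial $n$ multisets is realizable. For the inductive step, assume that after $k$ steps every multiset in the collection is realizable, and consider the $(k+1)$st step. If it is an application of Theorem~\ref{join}, then two realizable multisets are replaced by their union, which is realizable by that same theorem. If it is an application of Theorem~\ref{1+}, then a realizable multiset is replaced by the one obtained by increasing its Perron root by some $\epsilon>0$, and that is realizable by Theorem~\ref{1+}. If it is an application of Theorem~\ref{guo_th}, then a realizable multiset is replaced by one obtained by Guo's perturbation, which is realizable by Theorem~\ref{guo_th}. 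In all three cases, the multisets not touched remain realizable by the inductive hypothesis, so the invariant is preserved.

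After the final step only the multiset $\Lambda$ remains, hence $\Lambda$ is realizable. Since essentially all the mathematical content is encapsulated in the three theorems already cited, there is no real obstacle in this argument; the only thing to be careful about is the formal bookkeeping, namely that each rule in Definition~\ref{C_realizability_2} is literally one of the three theorems applied to the current collection, and that ``realizable'' is preserved multiset-by-multiset. This makes the result a direct consequence of Theorems~\ref{join},~\ref{1+}, and~\ref{guo_th}, packaged as an induction on the length of the derivation.
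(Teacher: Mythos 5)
Your argument is correct. The paper itself states Theorem~\ref{th_original} without proof, citing it as the main result of~\cite{bms2008}; your induction on the length of the derivation, with the invariant that every multiset in the current collection is realizable (base case $\{0\}$ realized by the $1\times 1$ zero matrix, inductive step by Theorems~\ref{join},~\ref{1+} and~\ref{guo_th}), is exactly the standard argument underlying that result, and there is no gap in it.
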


Mariju\'an, Pisonero and Soto~\cite{mps2} (see also \cite{mps1})  showed that the $C-$realizability was one of the broadest criteria for the  RNIEP.  Ellard and \v{S}migoc~\cite{es2015} proved the equivalence of the $C-$realizability criterion to other three criteria due to Soto~\cite{s2013}, Soules~\cite{s1983} (refined by Elsner, Nabben and Neumann~\cite{enn1998}), and their own Ellard-\v{S}migoc method. As a consequence of this equivalence  they concluded that  the $C-$realizability is also a criterion of nonnegative symmetric realizability.
The advantage of $C-$realizability with respect to other criteria is the simplicity of its approach. The main objective of this work is to go further, that is, simplify the basic rules (given by Theorems~\ref{join},~\ref{1+}, and~\ref{guo_th}) that guarantee $C-$realizability.

 Recently, Mariju\'an and Moro~\cite{mm2021}  obtained a combinatorial characterization of $C-$realizable multisets with zero sum  (see~\cite{mm2023} for the general case), together with explicit formulas for $C-$realizable multisets having at most four positive entries.  It should be noted that  the definition of $C-$realizability  that  they consider  follows from a incomplete version of Theorem~\ref{guo_th}, more precisely, they omit the realizability of $\{\lambda_1+ \epsilon,\lambda_2+ \epsilon, \lambda_3 ,\ldots,\lambda_n\}$. Intuitively this makes sense because it was thought that its inclusion would not augment the set of realizable lists.  For integers that intuition was correct,  although it is quite hard to prove as we will see. On the other hand,  for rationals and for reals  we have not been able to prove it. These difficulties seem to tells us that something deeper than expected is at work here.

In Section~\ref{Another-approach-to-C} we will dedicate special attention to the rule given by Theorem~\ref{guo_th}, indeed we will partition this rule into four parts. So the three original rules  (corresponding to Theorems~\ref{join} to~\ref{guo_th}) become six rules after the partition. Consequently, we will introduce the $C_6$ and the $C_3-$realizability for a multiset, depending on whether we consider the six rules  or if we consider  only three of them: the one corresponding to Theorem~\ref{join}, the one corresponding to Theorem~\ref{1+}, and one of the four corresponding to Theorem~\ref{guo_th}. If the   rules only involve integers then we will  speak of $C_{\mathbb{Z},6}$ and $C_{\mathbb{Z},3}-$realizability. These types of realizability can also be extended to rationals or to reals. In Section~\ref{types_moves} we introduce states of integer numbers as tuples of tuples of integer numbers: for example, $((5,-5),(2,-1,-1))$ is a state. We provide six transformations or moves that  can be applied to states. The six moves for states will correspond to the six rules for multisets. We finish this section providing a collection of laws that any sequence of moves should verify. 

It is important to note that when we work with multisets the order of its elements is irrelevant, but when we work with states the order is relevant.
In Section~\ref{C*6-realizability and C*3-realizability} we introduce the $C^*_{\mathbb{Z},6}$ and the $C^*_{\mathbb{Z},3}-$realizability for tuples of integers. This concepts  are analogous to $C_{\mathbb{Z},6}$ and $C_{\mathbb{Z},3}-$realizability, considering the 6 moves instead of the 6 rules, and starting from $((0),\ldots,(0))$ instead of $\{0\},\ldots,\{0\}$. And we will finish this section by establishing the relation between $C_{\mathbb{Z},k}$ and $C^*_{\mathbb{Z},k}-$realizability for $k=6$ and for $k=3$.  

 In Section~\ref{Technical_Lemmas} we study the swap of two consecutive moves: let $M_1$ and $M_2$ be two consecutive moves applied to a state $\Lambda$, then these moves can be swapped whenever $M_2(\Lambda)$ and $M_1(M_2(\Lambda))$ make sense, and $M_1(M_2(\Lambda))=M_2(M_1(\Lambda))$. An adequate understanding of swaps in a sequence of moves is a necessary tool for the proof of the main result.

Section~\ref{Technical_Lemmas_2} contains technical results.  In Section~\ref{MainTheorem} we present the proof of the main result, Theorem~\ref{avoidingtype456}, that says that a multiset of integer numbers is $C_{\mathbb{Z},6}-$realizable if and only if it is  $C_{\mathbb{Z},3}-$realizable. Finally, in Section~\ref{Final_remarks} we discuss the extension of the main theorem to rationals and to reals. We also discuss the decision problem of deciding when a given a multiset is $C-$realizable. In this sense, Borobia and Canogar~\cite{bc2017} proved that the decision problem of determining if $\Lambda$ is $C-$realizable is NP-hard. This result should not discourage the search for more efficient algorithms.

\section{An alternative approach to $C-$realizability} \label{Another-approach-to-C}

 As we said in the introduction, it will be convenient to restate Theorem~\ref{guo_th} by partitioning it into four items.
\begin{theorem}\label{guo_th_2}
Let $\Lambda=\{\lambda_1,\ldots,\lambda_n\}$ be a realizable multiset of $\mathbb{C}$ whose  Perron root is $\lambda_1$ and let  $\lambda_2$ be real. For any $\epsilon>0$ we have that:
\begin{enumerate}    
    \item \label{move-c-1} If $\lambda_2\leq 0$, then the multiset $\{\lambda_1+ \epsilon,\lambda_2- \epsilon,\ldots,\lambda_n\}$ is realizable.
    \item \label{move-c-2} If $\lambda_2\geq \epsilon$, then the multiset $\{\lambda_1+ \epsilon,\lambda_2- \epsilon,\ldots,\lambda_n\}$ is realizable.
    \item \label{move-c-3} If $\lambda_2< 0$ with  $|\lambda_2|\geq \epsilon$, then the multiset $\{\lambda_1+ \epsilon,\lambda_2+ \epsilon,\ldots,\lambda_n\}$ is realizable.
    \item \label{move-c-4} If $\lambda_2\geq 0$, then the multiset $\{\lambda_1+ \epsilon,\lambda_2+ \epsilon,\ldots,\lambda_n\}$ is realizable.
\end{enumerate}
\end{theorem}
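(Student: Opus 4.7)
The plan is elementary: each of the four items is a direct specialization of Theorem~\ref{guo_th}, so the proof reduces to a single invocation. For items~\ref{move-c-1} and~\ref{move-c-2} I would apply Theorem~\ref{guo_th} to the realizable multiset $\Lambda$ with the stated value of $\epsilon$, and pick the ``$-\epsilon$'' branch of the conclusion to produce $\{\lambda_1+\epsilon,\lambda_2-\epsilon,\lambda_3,\ldots,\lambda_n\}$; for items~\ref{move-c-3} and~\ref{move-c-4} I would do the same and pick the ``$+\epsilon$'' branch. In each case the extra hypothesis imposed on $\lambda_2$ (non-positivity, $\lambda_2\geq\epsilon$, $|\lambda_2|\geq\epsilon$, or $\lambda_2\geq 0$) plays no role in obtaining realizability; it is recorded in the statement only to mark the sign class that $\lambda_2$ and $\lambda_2\pm\epsilon$ will occupy after the transformation.

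The genuine \emph{content} of the statement is therefore the choice of partition, not its proof. The four items are organized so that item~\ref{move-c-1} captures the case in which an already non-positive entry is pushed strictly below zero; item~\ref{move-c-2} captures the case in which a sufficiently positive entry is decreased while remaining non-negative; item~\ref{move-c-3} captures the case in which a sufficiently negative entry is increased while remaining non-positive; and item~\ref{move-c-4} captures the case in which a non-negative entry is pushed strictly above zero. The two ``crossing'' situations not explicitly covered (namely $0<\lambda_2<\epsilon$ paired with a decrement, or $-\epsilon<\lambda_2<0$ paired with an increment) can always be produced as two successive non-crossing moves, first bringing $\lambda_2$ to $0$ and then moving it past $0$, which is presumably why the partition will be sufficient for the subsequent manipulations.

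Because every item is an immediate consequence of Theorem~\ref{guo_th}, there is no mathematical obstacle here; the only difficulty, if any, is conceptual. One must appreciate why it is worth disassembling a single rule into four pieces at this stage. The payoff will become visible later in the paper, where each piece is paired with a specific ``move of type (c)'' on a state, and where the sign restrictions recorded in items~\ref{move-c-1}--\ref{move-c-4} will be precisely what is needed to analyse when two consecutive moves can be swapped or replaced.
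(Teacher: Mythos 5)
Your proposal matches the paper exactly: the paper likewise observes that Theorem~\ref{guo_th} immediately implies all four items of Theorem~\ref{guo_th_2} (the sign hypotheses serving only to classify cases), and it handles the two crossing situations by the same two-step composition you describe when establishing the converse equivalence. Correct, same approach.
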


Let us see the equivalence of Theorem~\ref{guo_th} and Theorem~\ref{guo_th_2}. That Theorem~\ref{guo_th} implies Theorem~\ref{guo_th_2} is obvious. On the other hand, the cases of Theorem~\ref{guo_th} that are not covered by Theorem~\ref{guo_th_2} are:
\begin{itemize}
    \item $\{\lambda_1+ \epsilon,\lambda_2- \epsilon,\ldots,\lambda_n\}$ when $\epsilon>\lambda_2>0$. It can be obtained in two steps:
$$
\{\lambda_1,\lambda_2,\ldots,\lambda_n\} \longrightarrow 
\{\lambda_1+\epsilon_1, \lambda_2-\epsilon_1,\ldots,\lambda_n\}
\longrightarrow 
\{\lambda_1+ \epsilon_1+\epsilon_2,\lambda_2-\epsilon_1-\epsilon_2,\ldots,\lambda_n\}
$$
with $\epsilon_1=\lambda_2$ as in item~\ref{move-c-2} of Theorem~\ref{guo_th_2}, and  $\epsilon_2=\epsilon-\lambda_2$ as in item~\ref{move-c-1} of Theorem~\ref{guo_th_2}.

    \item $\{\lambda_1+ \epsilon,\lambda_2+ \epsilon,\ldots,\lambda_n\}$ when $\lambda_2<0$ and $\epsilon>|\lambda_2|$. It can be obtained in two steps:
$$
\{\lambda_1,\lambda_2,\ldots,\lambda_n\} \longrightarrow 
\{\lambda_1+\epsilon_1, \lambda_2+\epsilon_1,\ldots,\lambda_n\}
\longrightarrow 
\{\lambda_1+ \epsilon_1+\epsilon_2,\lambda_2+\epsilon_1+\epsilon_2,\ldots,\lambda_n\}
$$
with $\epsilon_1=|\lambda_2|$ as in item~\ref{move-c-3} of Theorem~\ref{guo_th_2}, and  $\epsilon_2=\epsilon-|\lambda_2|$ as in item~\ref{move-c-4} of Theorem~\ref{guo_th_2}.

\end{itemize}

Now we group together Theorem~\ref{join}, Theorem~\ref{1+} and Theorem~\ref{guo_th_2} particularizing them in the reals, the rationals or the integers.

\begin{theorem} \label{guo-etc}
 Let $\Lambda=\{\lambda_1,\ldots,\lambda_n\}$  and $\Gamma$ be two realizable multisets of $\mathbb{S}$ where $\mathbb{S}$ is $\mathbb{R}$, $\mathbb{Q}$ or $\mathbb{Z}$. Let  $\lambda_1$ be  the  Perron root of $\Lambda$ and let $\epsilon>0$ with $\epsilon\in \mathbb{S}$. Then
\begin{enumerate}[(i)]
    \item \label{move-i-guo} The multiset $\Lambda\cup \Gamma$ is realizable. 
    \item \label{move-ii-guo} If $\epsilon >0$, then the set $\{\lambda_1 +\epsilon,\lambda_2,\ldots,\lambda_n\}$ is realizable.
    \item \label{move-iii-guo} If $\lambda_2\leq 0$, then the multiset $\{\lambda_1+ \epsilon,\lambda_2- \epsilon,\lambda_3,\ldots,\lambda_n\}$ is realizable.
    \item \label{move-iv-guo} If $\lambda_2\geq \epsilon$, then the multiset $\{\lambda_1+ \epsilon,\lambda_2- \epsilon,\lambda_3,\ldots,\lambda_n\}$ is realizable.
    \item \label{move-v-guo} If $\lambda_2< 0$ with  $|\lambda_2|\geq \epsilon$, then the multiset $\{\lambda_1+ \epsilon,\lambda_2+ \epsilon,\lambda_3,\ldots,\lambda_n\}$ is realizable.
    \item \label{move-vi-guo} If $\lambda_2\geq 0$, then the multiset $\{\lambda_1+ \epsilon,\lambda_2+ \epsilon,\lambda_3,\ldots,\lambda_n\}$ is realizable.
\end{enumerate}
\end{theorem}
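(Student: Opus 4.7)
The plan is to observe that Theorem~\ref{guo-etc} is just a bookkeeping consolidation of Theorem~\ref{join}, Theorem~\ref{1+} and Theorem~\ref{guo_th_2}, under the single additional restriction that the data $\Lambda$, $\Gamma$ and the shift $\epsilon$ all lie in one common set $\mathbb{S}\in\{\mathbb{R},\mathbb{Q},\mathbb{Z}\}$. The only genuinely new content to verify is that the resulting multiset again has every entry in $\mathbb{S}$; realizability itself is already furnished by the cited classical results, which give a nonnegative realizing matrix (whose entries need not belong to $\mathbb{S}$).

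The first step is to recall that $\mathbb{Z}$, $\mathbb{Q}$ and $\mathbb{R}$ are each closed under addition and subtraction. Hence for every one of the six items the entries of the output multiset are either unchanged, of the form $\lambda_i\pm\epsilon$, or taken from $\Gamma\subset\mathbb{S}$, and so they automatically lie in $\mathbb{S}$ whenever $\lambda_1,\ldots,\lambda_n,\epsilon\in\mathbb{S}$ and $\Gamma\subset\mathbb{S}$. In particular, no new element outside of $\mathbb{S}$ can be introduced by any single application of a rule.

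The second step is to match the items one by one: (i) is a direct restatement of Theorem~\ref{join}; (ii) is Theorem~\ref{1+}; and items (iii), (iv), (v), (vi) coincide, respectively, with items \ref{move-c-1}, \ref{move-c-2}, \ref{move-c-3}, \ref{move-c-4} of Theorem~\ref{guo_th_2}. In each of these last four cases the hypothesis ``$\lambda_2\in\mathbb{R}$'' of Theorem~\ref{guo_th_2} is automatic, since $\mathbb{S}\subseteq\mathbb{R}$, and the sign/size conditions on $\lambda_2$ and $\epsilon$ are copied verbatim.

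There is no substantive obstacle here: the statement is purely a packaging exercise, asserting that the three classical rules (and the four-way partition of Guo's rule) respect the ambient arithmetic closure of $\mathbb{S}$. The real work of the paper begins only afterward, where one asks whether the six-rule system can be simplified to three rules when $\mathbb{S}=\mathbb{Z}$.
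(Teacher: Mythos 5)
Your proposal is correct and matches the paper's treatment: the paper offers no separate proof of Theorem~\ref{guo-etc}, presenting it simply as the regrouping of Theorems~\ref{join},~\ref{1+} and~\ref{guo_th_2} specialized to a common ambient set $\mathbb{S}$, exactly as you describe. Your added remark that closure of $\mathbb{S}$ under addition and subtraction is the only point needing verification is a fair (if implicit in the paper) observation and introduces no discrepancy.
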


Now we return to the concept of $C-$realizability by employing Theorem~\ref{guo-etc}, and expanding the definition to reals, rationals and integers. 

\begin{definition} \label{CZ_realizability_6_rules}
Let $\{\lambda_1,\ldots,\lambda_n\}$ be  a multiset  of $\mathbb{S}$ where $\mathbb{S}$ is $\mathbb{R}$, $\mathbb{Q}$ or $\mathbb{Z}$. We say that $\Lambda$ is  {\bf $C_{\mathbb{S},6}-$realizable}
if it can be reached starting from the $n$ realizable basic  multisets $\{0\},  \ldots, \{0\}$ and successively applying any finite number of times any of the six rules~\eqref{move-i-guo}--\eqref{move-vi-guo} of  Theorem \ref{guo-etc}. 

\end{definition} 

Of course,  the  $C-$realizable multisets of  Definition~\ref{C_realizability_2} are the $C_{\mathbb{R},6}-$realizable multisets of Definition~\ref{CZ_realizability_6_rules}.

The partition of Theorem~\ref{guo_th} into the four items of Theorem~\ref{guo_th_2}  is closely related with the simplification that we pursue. Indeed we will see that when  $\mathbb{S}=\mathbb{Z}$, rules~\eqref{move-iv-guo}--\eqref{move-vi-guo} of Theorem~\ref{guo-etc} are redundant and can be omitted. 
%{\colm and so  $C_{\mathbb{R},6}$  and   $C_{\mathbb{R},3}-$realizability are equivalent. } 
For the cases when $\mathbb{S}$ is $\mathbb{Q}$ or $\mathbb{R}$ we were unable to prove the same. In Section~\ref{extension-rationals-reals} we will be more specific about the obstacle that we encounter with $\mathbb{S}=\mathbb{Q}$, while the case $\mathbb{S}=\mathbb{R}$ is much more elusive. That said, it is convenient to also have the following definition where the 6 rules are replaced by the 3 rules~\eqref{move-i-guo}--\eqref{move-iii-guo}.

\begin{definition} \label{CZ_realizability_2_rules}
Let  $\mathbb{S}$ be $\mathbb{R}$, $\mathbb{Q}$ or $\mathbb{Z}$. A multiset $\{\lambda_1,\ldots,\lambda_n\}$ of $\mathbb{S}$ is  {\bf $C_{\mathbb{S},3}-$realizable}
if it can be reached starting from the $n$ realizable multisets $\{0\}, \{0\}, \ldots, \{0\}$ and successively applying any finite number of times any of the three rules~\eqref{move-i-guo}--\eqref{move-iii-guo} of  Theorem \ref{guo-etc}.
\end{definition}

\subsection{$C-$realizability for integers}

In this work we will work mainly with  integer numbers. Note that  rules~\eqref{move-ii-guo}--\eqref{move-vi-guo} of Theorem~\ref{guo-etc} with an integer $\epsilon>1$, can be split into $\epsilon$ repetitions of the same rule  in which we replace $\epsilon$ by 1. So, when $\mathbb{S}=\mathbb{Z}$, Theorem~\ref{guo-etc} can be restated in a simplified way as follows. 

\begin{theorem} \label{guo-etc-2}
Let $\Lambda=\{\lambda_1,\ldots,\lambda_n\}$ and $\Gamma$ be two realizable multisets of $\mathbb{Z}$, and let $\lambda_1$ be the Perron root of $\Lambda$. Then
\begin{enumerate}[(i)]
    \item \label{move-i} The set $\Lambda\cup \Gamma$ is realizable.    \item \label{move-ii} The set $\{\lambda_1+1,\lambda_2,\ldots,\lambda_n\}$ is realizable.
    \item \label{move-iii} If $\lambda_2\leq 0$, then the set $\{\lambda_1+ 1,\lambda_2- 1,\lambda_3,\ldots,\lambda_n\}$ is realizable.
    \item \label{move-iv} If $\lambda_2>0$, then the set $\{\lambda_1+ 1,\lambda_2- 1,\lambda_3,\ldots,\lambda_n\}$ is realizable.
    \item \label{move-v} If $\lambda_2< 0$, then the set $\{\lambda_1+ 1,\lambda_2+ 1,\lambda_3,\ldots,\lambda_n\}$ is realizable.
    \item \label{move-vi} If $\lambda_2\geq 0$, then the set $\{\lambda_1+ 1,\lambda_2+ 1,\lambda_3,\ldots,\lambda_n\}$ is realizable.
\end{enumerate}
\end{theorem}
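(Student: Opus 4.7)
The plan is to derive Theorem~\ref{guo-etc-2} directly from Theorem~\ref{guo-etc} by specializing $\mathbb{S}=\mathbb{Z}$ and $\epsilon=1$. Since $1$ is the smallest positive integer, each of the six items of Theorem~\ref{guo-etc-2} is simply the corresponding item of Theorem~\ref{guo-etc} after substituting this particular value of $\epsilon$, together with the observation that for an integer $\lambda_2$ the various metric conditions involving $\epsilon$ collapse into pure sign conditions.

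Concretely, I would go item by item. Items \eqref{move-i} and \eqref{move-ii} are immediate: \eqref{move-i} is \eqref{move-i-guo} of Theorem~\ref{guo-etc} and \eqref{move-ii} is \eqref{move-ii-guo} with $\epsilon=1$. For \eqref{move-iii}, the hypothesis $\lambda_2\leq 0$ is exactly the hypothesis of \eqref{move-iii-guo}, so the conclusion follows from \eqref{move-iii-guo} applied with $\epsilon=1\in\mathbb{Z}$. For \eqref{move-iv}, I use that $\lambda_2\in\mathbb{Z}$ with $\lambda_2>0$ forces $\lambda_2\geq 1=\epsilon$, which is precisely the hypothesis of \eqref{move-iv-guo}. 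For \eqref{move-v}, I use that $\lambda_2\in\mathbb{Z}$ with $\lambda_2<0$ forces $|\lambda_2|\geq 1=\epsilon$, matching the hypothesis of \eqref{move-v-guo}. Finally, \eqref{move-vi} is \eqref{move-vi-guo} with $\epsilon=1$.

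A small bookkeeping remark I would include is that in every item the element $\lambda_1+1$ is still a (hence \emph{the}) Perron root of the resulting multiset, because $\lambda_1\geq |\lambda_j|$ for all $j$ and the only other value that changes is $\lambda_2\pm 1$, whose modulus is at most $|\lambda_2|+1\leq \lambda_1+1$. This is not strictly needed for the statement as written, but it is what justifies the motivating remark preceding the theorem, namely that rules \eqref{move-ii-guo}--\eqref{move-vi-guo} of Theorem~\ref{guo-etc} with integer $\epsilon>1$ may be realized as $\epsilon$ successive applications of the corresponding unit-step rule: the intermediate multisets all satisfy the hypothesis again (e.g.\ after $k<\epsilon$ iterations, $\lambda_2-k$ is still $\leq 0$ in case \eqref{move-iii-guo}, still $>0$ in case \eqref{move-iv-guo}, still $<0$ in case \eqref{move-v-guo}, still $\geq 0$ in case \eqref{move-vi-guo}), so the splitting is legitimate.

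There is no real obstacle here: the result is a reformulation rather than a new theorem. The only point that deserves attention is the translation of the inequalities $\lambda_2\geq\epsilon$ and $|\lambda_2|\geq\epsilon$ into the sign conditions $\lambda_2>0$ and $\lambda_2<0$, which is valid precisely because we are working in $\mathbb{Z}$ with $\epsilon=1$ and would fail in $\mathbb{Q}$ or $\mathbb{R}$; this is also the reason why the analogous simplification is not available in those wider settings, as foreshadowed in Section~\ref{extension-rationals-reals}.
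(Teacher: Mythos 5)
Your proposal is correct and matches the paper's own (implicit) justification: the paper presents Theorem~\ref{guo-etc-2} as the specialization of Theorem~\ref{guo-etc} to $\mathbb{S}=\mathbb{Z}$ with $\epsilon=1$, noting exactly as you do that integrality collapses the conditions $\lambda_2\geq\epsilon$ and $|\lambda_2|\geq\epsilon$ into the sign conditions $\lambda_2>0$ and $\lambda_2<0$, and that a rule with integer $\epsilon>1$ splits into $\epsilon$ unit-step applications. Your added check that the intermediate multisets keep satisfying the hypotheses is a sensible bookkeeping point that the paper leaves tacit.
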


Now, using Theorem~\ref{guo-etc-2} instead of Theorem~\ref{guo-etc} we can redefine $C_{\mathbb{Z},6}$ and $C_{\mathbb{Z},3}-$realizable multisets given in Definition~\ref{CZ_realizability_6_rules} and~\ref{CZ_realizability_2_rules} respectively.

\begin{definition} \label{CZ_realizability_2}
A multiset $\{\lambda_1,\ldots,\lambda_n\}$ of integer numbers is said to be {\bf $C_{\mathbb{Z},6}-$realizable}
if it can be reached starting from the $n$ realizable sets $\{0\},  \ldots, \{0\}$ and successively applying any finite number of times any of the six rules~\eqref{move-i}--\eqref{move-vi} of  Theorem \ref{guo-etc-2}. And is {\bf $C_{\mathbb{Z},3}-$realizable} if we only apply the three rules~\eqref{move-i}--\eqref{move-iii}
\end{definition}

A consequence of Theorem~\ref{th_original} is the following.

\begin{corollary}  \label{For_Z}
    If a multiset $\{\lambda_1,\ldots,\lambda_n\}$ of $\mathbb{Z}$ is $C_{\mathbb{Z},6}$ or   $C_{\mathbb{Z},3}-$realizable, then it is realizable.
\end{corollary}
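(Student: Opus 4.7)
The plan is to show the chain of implications
$$C_{\mathbb{Z},3}\text{-realizable} \ \Longrightarrow\  C_{\mathbb{Z},6}\text{-realizable} \ \Longrightarrow\  C\text{-realizable},$$
and then invoke Theorem~\ref{th_original}. Here ``$C$-realizable'' is understood in the sense of Definition~\ref{C_realizability_2}, which by the remark following Definition~\ref{CZ_realizability_6_rules} coincides with $C_{\mathbb{R},6}$-realizability.

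The first implication is immediate from the definitions, because the three rules \eqref{move-i}--\eqref{move-iii} used in the definition of $C_{\mathbb{Z},3}$-realizability form a subset of the six rules \eqref{move-i}--\eqref{move-vi} used for $C_{\mathbb{Z},6}$-realizability, so any derivation of the first kind is automatically a derivation of the second kind.

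For the second implication the argument is a direct book-keeping verification: each of the six rules of Theorem~\ref{guo-etc-2} is the specialization $\epsilon=1$ of the correspondingly numbered rule of Theorem~\ref{guo-etc} in the real setting (which, in turn, is one of Theorems~\ref{join}, \ref{1+}, or \ref{guo_th_2}, and hence ultimately of Theorem~\ref{guo_th}). The only points that require a moment of attention are rules~\eqref{move-iv} and~\eqref{move-v} of Theorem~\ref{guo-etc-2}, whose hypotheses $\lambda_2>0$ and $\lambda_2<0$ must be matched against the stronger real hypotheses $\lambda_2\geq\epsilon$ and $|\lambda_2|\geq\epsilon$ appearing in items~\eqref{move-iv-guo} and~\eqref{move-v-guo} of Theorem~\ref{guo-etc}; but since $\lambda_2\in\mathbb{Z}$ and $\epsilon=1$, strict inequality with $0$ forces $|\lambda_2|\geq 1=\epsilon$, so the real hypothesis is satisfied. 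Consequently, every sequence of integer moves starting from $\{0\},\ldots,\{0\}$ and ending at $\Lambda$ is also a valid sequence of $C$-realizability moves in the real setting, and $\Lambda$ is $C$-realizable.

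Finally, Theorem~\ref{th_original} says that every $C$-realizable multiset is realizable, completing the proof. There is no genuine obstacle in this argument: the corollary is essentially an observation that the integer-restricted rules are particular instances of the real rules from which the original criterion was built.
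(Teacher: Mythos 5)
Your proof is correct and follows exactly the route the paper intends: the paper states this corollary without proof as an immediate consequence of Theorem~\ref{th_original}, relying on the fact that the integer rules of Theorem~\ref{guo-etc-2} are specializations (with $\epsilon=1$) of the real rules underlying $C$-realizability. Your explicit check that the strict integer hypotheses $\lambda_2>0$ and $\lambda_2<0$ imply the real hypotheses $\lambda_2\geq\epsilon$ and $|\lambda_2|\geq\epsilon$ when $\epsilon=1$ is precisely the book-keeping the paper leaves implicit.
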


\section{States and types of moves for states} \label{types_moves}

In the previous section we used multisets. The elements of multisets do not have an order and, consequently, analyzing a long sequence of rules with multisets would be challenging. On the other hand, having an order on the elements and preserving that order after any rule, would permit us to easily track the changes that the rules perform. In this sense we will replace the use of multisets by tuples.  Actually, we will need to manage a collection of tuples of different sizes, or a tuple of tuples and we will call them states. We introduce all the terminology referring to states in the following definition.

\begin{definition} We consider the following sets:

\begin{itemize}

\item For any $n\in \mathbb{N}$ and any $p\in\{1,\ldots,n\}$ we will consider the set  given by
\[
\mathbb{Z}^{n,p}:=\{(\Lambda_{1}, \ldots, \Lambda_{p})  : \Lambda_i \in \mathbb{Z}^{n_i}  ; \,   n=n_1+\cdots+n_p\}.
\]
  Each element of $\mathbb{Z}^{n,p}$ will be  called a {\bf state}. So, a state of $\mathbb{Z}^{n,p}$ is a tuple of $p$ tuples which might be of different sizes and with $n$  integers overall. 
For  $\big((\lambda_{1}, \ldots, \lambda_{n})\big)\in \mathbb{Z}^{n,1}$ we will also use the notation $(\lambda_{1}, \ldots, \lambda_{n})\in \mathbb{Z}^{n}$.

\item   To the state
\[
\Phi=\Big((\lambda_{1}, \ldots, \lambda_{n_1}),(\lambda_{n_1+1}, \ldots, \lambda_{n_1+n_2}),\ldots, (\lambda_{n_1+\cdots+n_{p-1}+1},\ldots, \lambda_{n})\Big)\in \mathbb{Z}^{n,p}
\]
we  associate the $n-$tuple
\[\Phi^*=(\lambda_{1}, \ldots,\lambda_{n})\in \mathbb{Z}^n.\]
The {\bf $k^{th}$ position} of state $\Phi$ denotes the specific coordinate and tuple where $\lambda_k$ is located inside $\Phi$. So it makes sense to talk about the tuple to which the $k^{th}$ position belongs. And the value at the {\bf $k^{th}$ position} of  $\Phi$ will be    $\Phi[k]=\Phi^*[k]=\lambda_k$. 

 \item Let $\Lambda=(\lambda_{1}, \ldots,\lambda_{n} )\in \mathbb{Z}^n$. We will say that $\lambda_i$ is \textbf{dominant} in $\Lambda$ if  $\lambda_{i}\geq |\lambda_{j}|$ for all $j=1,\ldots, n$. 
 
 \item Let $\Phi\in \mathbb{Z}^{n,p}$. We will say that \textbf{the $i^{th}$ position is dominant} in $\Phi$ if  the value at the $i^{th}$ position of $\Phi$ is dominant in the tuple  where it is located. For instance, in the state  
$$
\Phi= \big((8,-2,8,1,-4),(-1,5,-6),(2,-3,5,-5)\big) \in \mathbb{Z}^{12,3}
$$
the first tuple of $\Phi$ has two dominant positions in the  $1^{st}$ and $3^{rd}$ positions;  the second tuple of $\Phi$ has no dominant position;  and the third tuple of $\Phi$ has one  dominant position in the $11^{th}$ position.

\end{itemize}
\end{definition}

The following definition translates the six rules of Theorem~\ref{guo-etc-2} applied to multisets into six  moves  applied to states. And these will be the terms we will use throughout this work, that is, rules for multisets and moves  for states. 

\begin{definition}
A \textbf{move} is a function that transforms a state into another state.  The first move that we consider transforms a state of $\mathbb{Z}^{n,p}$ into another state of $\mathbb{Z}^{n,p-1}$:
\begin{itemize}
\item For $1\leq d < p$, the {\bf type 1 move} $T_1^{(d)^\frown (d+1)}$ acts on $\Phi$ so that $T_1^{(d)^\frown (d+1)}(\Phi)$ is obtained from $\Phi$ by joining or concatenating the $d^{th}$  and  $(d+1)^{th}$ tuples of $\Phi$. We can only join two consecutive tuples, and there is no reordering of the values in the new bigger tuple, in other words $T_1^{(d)^\frown (d+1)}(\Phi)[j]=\Phi[j]$ for all $j=1,\ldots, n$. 

We will say that $T_1^{(d)^\frown (d+1)}$ is a \textbf{valid move for $\Phi$}.
\end{itemize}

And the rest of the moves  transform a state  $\Phi$ of $\mathbb{Z}^{n,p}$ into another state of $\mathbb{Z}^{n,p}$:

\begin{itemize}
\item For $1\leq i\leq n$, the {\bf type 2 move} $T^{i}_2$ acts on $\Phi$ by only modifying the $i^{th}$ position, namely,  $T^{i}_2(\Phi)[i]=\Phi[i]+1$. 

We will say that $T^{i}_2$ is a \textbf{valid move for $\Phi$} if its $i^{th}$ position is dominant, otherwise we will say that $T^i_2$ is an \textbf{invalid move for $\Phi$}.

\item For $1\leq i,j \leq n$ with $i\neq j$, the {\bf type 3 move} $T^{ij}_3$ acts by only modifying the $i^{th}$ and $j^{th}$ positions, namely,   $T^{i,j}_3(\Phi)[i]=\Phi[i]+1$ and $T^{i,j}_3(\Phi)[j]=\Phi[j]-1$.  

We will say that $T^{ij}_3$ is a \textbf{valid move for $\Phi$} if the $i^{th}$ and $j^{th}$ position in $\Phi$ are in the same tuple, the $i^{th}$ position is dominant in $\Phi$, and $\Phi[j]\leq 0$, otherwise we will say that $T^{ij}_3(\Phi)$ is an \textbf{invalid move for $\Phi$}.

\item For $1\leq i,j \leq n$ with $i\neq j$, the {\bf type 4 move} $T^{ij}_4$ acts by only modifying the $i^{th}$ and $j^{th}$ positions, namely,   $T^{i,j}_4(\Phi)[i]=\Phi[i]+1$ and $T^{i,j}_4(\Phi)[j]=\Phi[j]-1$.  

Note that $T^{ij}_4$ is equal to $T^{ij}_3$ as transformation, the difference lies on the validity or invalidity. We will say that $T^{ij}_4$ is a \textbf{valid move for $\Phi$} if  the $i^{th}$ and $j^{th}$ position in $\Phi$ are in the same tuple, the $i^{th}$ position is dominant in $\Phi$, and $\Phi[j]> 0$, otherwise we will say that $T^{ij}_4$ is an \textbf{invalid move for $\Phi$}.

\item For $1\leq i,j \leq n$ with $i\neq j$, the {\bf type 5 move} $T^{ij}_5$ acts by only modifying the $i^{th}$ and $j^{th}$ positions, namely,   $T^{ij}_5(\Phi)[i]=\Phi[i]+1$ and $T^{ij}_5(\Phi)[j]=\Phi[j]+1$. 

We will say that $T^{ij}_5$ is a \textbf{valid move for $\Phi$} if  the $i^{th}$ and $j^{th}$ position in $\Phi$ are in the same tuple, the $i^{th}$ position is dominant in $\Phi$, and $\Phi[j]< 0$, otherwise we will say that $T^{ij}_5$ is an \textbf{invalid move for $\Phi$}.

\item For $1\leq i,j \leq n$ with $i\neq j$, the {\bf type 6 move} $T^{ij}_6$ acts by only modifying the $i^{th}$ and $j^{th}$ positions, namely,   $T^{ij}_6(\Phi)[i]=\Phi[i]+1$ and $T^{ij}_6(\Phi)[j]=\Phi[j]+1$.  

Note, again,  that $T^{ij}_6$ is equal to $T^{ij}_5$ as transformation, the difference lies on the validity or invalidity. We will say that $T^{ij}_6$ is a \textbf{valid move for $\Phi$} if  the $i^{th}$ and $j^{th}$ position in $\Phi$ are in the same tuple, the $i^{th}$ position is dominant in $\Phi$, and $\Phi[j]\geq 0$, otherwise we will sat that $T^{ij}_6$ is an \textbf{invalid move for $\Phi$}.

\end{itemize}
\end{definition}

All these moves, valid or invalid, will be said to be {\bf admissible moves}. With this we want to emphasize that  they can be performed, independently of its validity. Moves that are not admissible are moves that can not be performed, we will give examples of them bellow.

To clarify the idea of valid and invalid moves, we provide examples for each of the six types of moves when they are applied to  state
\begin{equation*}\label{example1}
\Phi= \big((8,-2,-3,1,-4),(5,-1,-2)\big) \in \mathbb{Z}^{8,2}.
\end{equation*} 
We consider the following admissible moves:
\begin{itemize}
\item $T_1^{(1)\frown(2)}(\Phi)= \big((8,-2,-3,1,-4,5,-1,-2)\big)$; 
\item $T_2^6(\Phi)= \big((8,-2,-3,1,-4),(6,-1,-2)\big)$;
\item $T_2^8(\Phi)= \big((8,-2,-3,1,-4),(5,-1,-1)\big)$;
\item $T_3^{6,8}(\Phi)=T_4^{6,8}(\Phi)= \big((8,-2,-3,1,-4),(6,-1,-3)\big)$; 
\item $T_3^{1,4}(\Phi)=T_4^{1,4}(\Phi)= \big((9,-2,-3,0,-4),(5,-1,-2)\big)$;
\item $T_3^{1,8}(\Phi)=T_4^{1,8}(\Phi)= \big((9,-2,-3,1,-4),(5,-1,-3)\big)$;
\item  $T_5^{6,8}(\Phi)=T_6^{6,8}(\Phi)= \big((8,-2,-3,1,-4),(6,-1,-1)\big)$;
\item $T_5^{1,4}(\Phi)=T_6^{1,4}(\Phi)= \big((9,-2,-3,2,-4),(5,-1,-2)\big)$;
\item $T_5^{1,8}(\Phi)=T_6^{1,8}(\Phi)= \big((9,-2,-3,1,-4),(5,-1,-1)\big)$.

\end{itemize}

Note  that for state $\Phi$ the move $T_1^{(1)\frown(2)}$ is valid, $T_2^6$ is valid,  $T_2^8$ is invalid,  $T_3^{6,8}$ is  valid   while $T_4^{6,8}$ is  invalid,   $T_3^{1,4}$ is invalid while  $T_4^{1,4}$ is  valid,   $T_3^{1,8}$ and $T_4^{1,8}$ are invalid, $T_5^{6,8}$ is  valid   while $T_6^{6,8}$ is  invalid,   $T_5^{1,4}$ is  invalid  while $T_6^{1,4}$ is valid, and finally $T_5^{1,8}$ and $T_6^{1,8}$ are invalid.

Finally we provide some moves that are not admissible for $\Phi$: $T_1^{(2)\frown(3)}$, $T_2^{10}$, $T_3^{1,9}$, $T_4^{11,12}$, $T_5^{12,1}$, and $T_6^{1,13}$. In all of them, at least one of the super-indexes is out of bounds and nothing can be done with $\Phi$.

\subsection{Laws for moves} \label{Laws_for_moves}

 %The laws can be applied to any state $\Phi$ allocated in a sequence of admissible moves, 
Consider the sequence
\begin{align*}
    &\Phi_1 \xrightarrow{M_1} \Phi_2 \xrightarrow{M_2}  \cdots \rightarrow \Phi_{k-1} \xrightarrow{M_{k-1}}\Phi_{k}  \xrightarrow{M_{k}} \Phi_{k+1}
\end{align*}
where $\Phi_1, \ldots, \Phi_{k+1}$ are states and $M_1, \ldots, M_{k}$ are valid moves that verify $M_i(\Phi_i)=\Phi_{i+1}$ for $i=1, \ldots, k$.
Now we provide, with respect to this sequence and any state $\Phi_t$ in that sequence, the following  list of laws that will be assumed throughout the whole work.
The proof of each law is  easy to deduce:
\begin{enumerate}

\item  \textbf{Permanence law}: If the $i^{th}$ and $j^{th}$ positions are in the same tuple of $\Phi_t$, then both positions remain on the same tuple in any subsequent states.  This law also works when the moves are admissible, valid or invalid.

\item \textbf{Dominance law}: When only valid moves  are applied, if a position is  dominant in $\Phi_t$ then it was  dominant in all previous states and,
equivalently, if a position is not dominant in a state then it will never be dominant in subsequent states.

\item \textbf{Increasing-decreasing law}:  When only valid moves of types 1, 2, 3 and 6 are applied, the positive values can only increase, and the negative values can only decrease. 
\end{enumerate}

%\begin{lemma} \label{T2_T3_T6_incompatibles}
%If a sequence of valid  moves of types 1 to 6 contains a move $T_3^{ij}$, then no subsequent move can be of type $T_2^j$, $T_3^{jk}$, $T_4^{jk}$, $T_5^{jk}$ nor $T_6^{jk}$.
%\end{lemma}

\section{$C^*_{\mathbb{Z},6}$ and $C^*_{\mathbb{Z},3}-$realizability for states} \label{C*6-realizability and C*3-realizability}

Note that the order in which the elements are allocated on the multiset $\{\lambda_1,\ldots,\lambda_n\}$ is not important when we determine its $C_{\mathbb{Z},6}$ or $C_{\mathbb{Z},3}-$realizability. On the other hand, on the next definition  applied to the tuple $(\lambda_1,\ldots,\lambda_n)$,  the order   is essential. 

\begin{definition}
The state $(\lambda_1,\ldots,\lambda_n)\in\mathbb{Z}^{n}$  is {\bf $C_{\mathbb{Z},6}^*-$realizable} if can be obtained from state $\big((0),\ldots, (0)\big)\in\mathbb{Z}^{n,n}$ by applying a sequence of  valid moves of types 1 to 6. 
And is {\bf $C_{\mathbb{Z},3}^*-$realizable} if we only apply valid moves of types 1 to 3. 
\end{definition}

Now we will see how $C_{\mathbb{Z},6}-$realizability is closely related to $C^*_{\mathbb{Z},6}-$realizability.

\begin{lemma} \label{Crealiz_C6realiz}
    A multiset $\{\lambda_1,\ldots,\lambda_n\}$ is  $C_{\mathbb{Z},6}-$realizable if and only if there is at least one permutation $\sigma$ of $n$ elements such that the state $(\lambda_{\sigma(1)},\ldots,\lambda_{\sigma(n)})\in\mathbb{Z}^{n}$  is $C^*_{\mathbb{Z},6}-$realizable. 
\end{lemma}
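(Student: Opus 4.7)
The proof splits into two implications.

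For $(\Leftarrow)$, I would argue by direct translation. Given a sequence of valid moves $M_1,\ldots,M_N$ taking $((0),\ldots,(0))$ to $(\lambda_{\sigma(1)},\ldots,\lambda_{\sigma(n)})$, at each intermediate step the underlying multisets of the tuples of the state form a collection that can be obtained from $n$ copies of $\{0\}$ by applying the rules of Theorem~\ref{guo-etc-2}. A type 1 move matches rule~\eqref{move-i}; a type 2 move at a dominant position matches rule~\eqref{move-ii}, since the dominant position of a tuple holds its Perron root; and for type 3, 4, 5, 6 moves the validity conditions on the sign of $\Phi[j]$ exactly match the hypotheses of rules~\eqref{move-iii}, \eqref{move-iv}, \eqref{move-v}, \eqref{move-vi}, respectively. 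Hence $\{\lambda_1,\ldots,\lambda_n\}$ is $C_{\mathbb{Z},6}$-realizable.

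For $(\Rightarrow)$ my plan is to choose $\sigma$ from the tree structure of the given derivation. Starting from a derivation of $\{\lambda_1,\ldots,\lambda_n\}$ by the rules of Theorem~\ref{guo-etc-2}, I would form its \emph{join tree}: the $n$ leaves correspond to the initial copies of $\{0\}$, and each internal node corresponds to one application of rule~\eqref{move-i}, with its two children being the subtrees whose multisets are joined. Rules~\eqref{move-ii}--\eqref{move-vi} attach to existing subtrees without altering the topology. I would let $\sigma$ be the permutation obtained by an in-order left-to-right traversal of the leaves; the key property is that, at every stage of the derivation, each current multiset occupies a contiguous block of positions in $(\lambda_{\sigma(1)},\ldots,\lambda_{\sigma(n)})$.

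With $\sigma$ fixed, I would simulate the multiset derivation by a sequence of valid moves on states, preserving the invariant that the current state $(T_1,\ldots,T_p)$ lists the current multisets in the tree order and that in each tuple the Perron root sits at a tracked dominant position. Rule~\eqref{move-i} joining two multisets becomes $T_1^{(s)\frown(s+1)}$, with $T_s$ and $T_{s+1}$ consecutive by construction; rule~\eqref{move-ii} becomes $T_2^i$ at the tracked Perron position; and rules~\eqref{move-iii}--\eqref{move-vi} become the corresponding type 3--6 moves $T_k^{i,j}$, where $i$ is the tracked Perron position and $j$ is any position of the same tuple currently holding the value to be modified, validity being precisely the sign hypothesis of the rule. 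A short check, using that after any unary rule the Perron value increases by $1$ while no other absolute value grows by more than $1$, shows that the tracked position remains dominant; after a join, the new Perron position is re-tracked. I expect the main obstacle to be exactly this alignment of joins with consecutive tuples, which the tree-based ordering is designed to resolve; once the ordering is fixed, all remaining verifications are direct from the definitions.
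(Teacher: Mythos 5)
Your proposal is correct and follows essentially the same route as the paper: the forward direction is the same direct translation of moves into rules, and for the converse the paper likewise extracts $\sigma$ from the tree of joins, ``untangling'' it so that every application of rule (i) merges two contiguous, adjacent blocks of positions. The only difference is that the paper carries this out on a worked example rather than stating the in-order-traversal argument in general, so your write-up is, if anything, the more complete one.
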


\begin{proof}

The sufficiency is quite straightforward. Suppose that we start with  $\big((0),\ldots, (0)\big)$ and, for some permutation $\sigma$,  we arrive to $(\lambda_{\sigma(1)},\ldots,\lambda_{\sigma(n)})$ by applying a sequence of  valid moves of types 1 to 6. If we change  tuples by multisets and  moves by rules, then we start with  $\{0\},\ldots, \{0\}$ and  apply the corresponding sequence of rules~\eqref{move-i}--\eqref{move-vi} of  Theorem \ref{guo-etc-2} to obtain the multiset $\{\lambda_{\sigma(1)},\ldots,\lambda_{\sigma(n)}\}$. And, of course, this multiset is equal to the multiset $\{\lambda_1,\ldots,\lambda_n\}$.

It will be easier to prove the necessity by showing the behaviour of one example, which we will analyze to understand how to find the permutation  of the statement of this lemma.  In the example we only consider rules of types~\eqref{move-i-guo} and~\eqref{move-iii-guo}, and at the end of the example we will explain why.

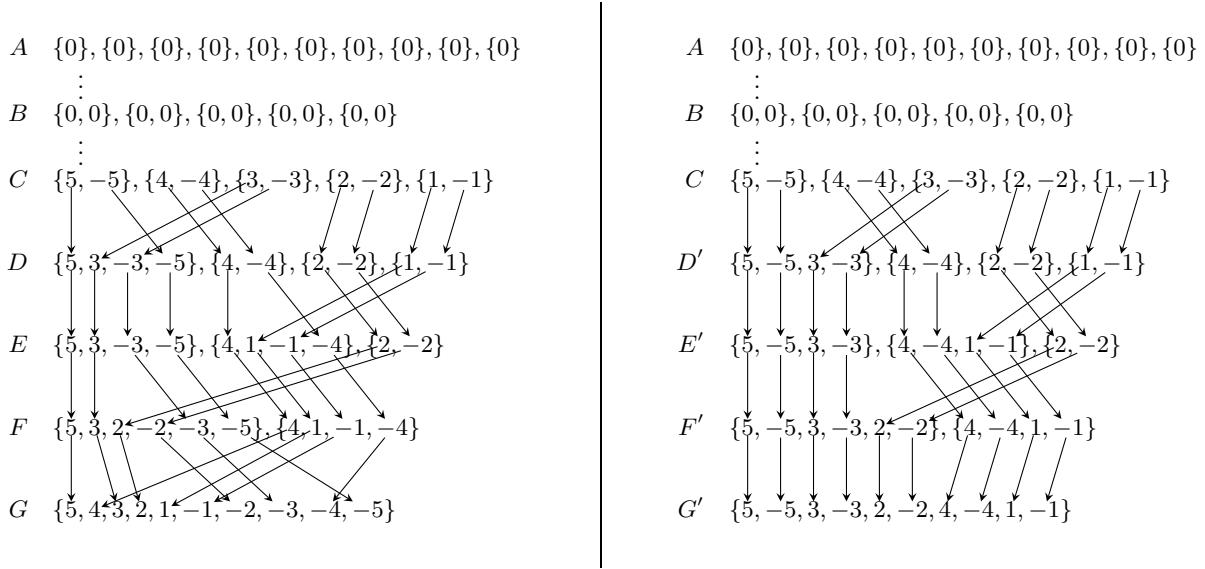
\begin{figure}[h!]
\begin{parcolumns}[rulebetween=true]{2}
    \colchunk[1]{
{\small
\begin{align*}
A&\quad \{0\},\{0\},\{0\},\{0\},\{0\},\{0\},\{0\},\{0\},\{0\},\{0\} \\[-2mm]
 & \qquad \vdots  \\[-2mm]
B&\quad \{0,0\},\{0,0\},\{0,0\},\{0,0\},\{0,0\} \\[-2mm]
 & \qquad \vdots  \\[-2mm]
C&\quad \{\tikznode{a0}{$5$},\tikznode{a1}{$-5$}\},\{\tikznode{a2}{$4$},\tikznode{a3}{$-4$}\},\{\tikznode{a4}{$3$},\tikznode{a5}{$-3$}\},\{\tikznode{a6}{$2$},\tikznode{a7}{$-2$}\},\{\tikznode{a8}{$1$},\tikznode{a9}{$-1$}\}\\[6mm]
D&\quad \{\tikznode{b'0}{$5$},\tikznode{b'4}{$3$},\tikznode{b'5}{$-3$}, \tikznode{b'1}{$-5$}\} ,\{\tikznode{b'2}{$4$},\tikznode{b'3}{$-4$}\}, \{\tikznode{b'6}{$2$},\tikznode{b'7}{$-2$}\},\{\tikznode{b'8}{$1$},\tikznode{b'9}{$-1$}\}\\[6mm]
E&\quad \{\tikznode{c'0}{$5$},\tikznode{c'4}{$3$},\tikznode{c'5}{$-3$},\tikznode{c'1}{$-5$}\} , \{\tikznode{c'2}{$4$},\tikznode{c'8}{$1$},\tikznode{c'9}{$-1$} ,\tikznode{c'3}{$-4$}\},\{\tikznode{c'6}{$2$},\tikznode{c'7}{$-2$}\}\\[6mm]
F&\quad \{\tikznode{d'0}{$5$},\tikznode{d'4}{$3$}, \tikznode{d'6}{$2$},\tikznode{d'7}{$-2$},\tikznode{d'5}{$-3$}, \tikznode{d'1}{$-5$}\}, \{\tikznode{d'2}{$4$},\tikznode{d'8}{$1$},\tikznode{d'9}{$-1$}  ,\tikznode{d'3}{$-4$}\}\\[6mm]
G&\quad \{\tikznode{e'0}{$5$}, \tikznode{e'2}{$4$}, \tikznode{e'4}{$3$}, \tikznode{e'6}{$2$},\tikznode{e'8}{$1$}, \tikznode{e'9}{$-1$}, \tikznode{e'7}{$-2$}, \tikznode{e'5}{$-3$}, \tikznode{e'3}{$-4$},\tikznode{e'1}{$-5$}\}   
\end{align*}}
\begin{tikzpicture}[remember picture,overlay
,rounded corners]
\path[->] (a0) edge (b'0); 
\path[->] (a1) edge (b'1); 
\path[->] (a2) edge (b'2); 
\path[->] (a3) edge (b'3); 
\path[->] (a4) edge (b'4); 
\path[->] (a5) edge (b'5); 
\path[->] (a6) edge (b'6); 
\path[->] (a7) edge (b'7); 
\path[->] (a8) edge (b'8); 
\path[->] (a9) edge (b'9);
%%%%%%%%%%%%%%
\path[->] (b'0) edge (c'0); 
\path[->] (b'1) edge (c'1); 
\path[->] (b'2) edge (c'2); 
\path[->] (b'3) edge (c'3); 
\path[->] (b'4) edge (c'4); 
\path[->] (b'5) edge (c'5); 
\path[->] (b'6) edge (c'6); 
\path[->] (b'7) edge (c'7); 
\path[->] (b'8) edge (c'8); 
\path[->] (b'9) edge (c'9);
%%%%%%%%%%%%
\path[->] (c'0) edge (d'0); 
\path[->] (c'1) edge (d'1); 
\path[->] (c'2) edge (d'2); 
\path[->] (c'3) edge (d'3); 
\path[->] (c'4) edge (d'4); 
\path[->] (c'5) edge (d'5); 
\path[->] (c'6) edge (d'6); 
\path[->] (c'7) edge (d'7); 
\path[->] (c'8) edge (d'8); 
\path[->] (c'9) edge (d'9);
%%%%%%%%%%%%%%%%%
\path[->] (d'0) edge (e'0); 
\path[->] (d'1) edge (e'1); 
\path[->] (d'2) edge (e'2); 
\path[->] (d'3) edge (e'3); 
\path[->] (d'4) edge (e'4); 
\path[->] (d'5) edge (e'5); 
\path[->] (d'6) edge (e'6); 
\path[->] (d'7) edge (e'7); 
\path[->] (d'8) edge (e'8); 
\path[->] (d'9) edge (e'9);
%%%%%%%%%%%%%%%%%
\end{tikzpicture} }
\colchunk[2]{
{\small
\begin{align*}
A&\quad \{0\},\{0\},\{0\},\{0\},\{0\},\{0\},\{0\},\{0\},\{0\},\{0\} \\[-2mm]
 & \qquad \vdots  \\[-2mm]
B&\quad \{0,0\},\{0,0\},\{0,0\},\{0,0\},\{0,0\} \\[-2mm]
 & \qquad \vdots  \\[-2mm]
C&\quad \{\tikznode{A0}{$5$},\tikznode{A1}{$-5$}\},\{\tikznode{A2}{$4$},\tikznode{A3}{$-4$}\},\{\tikznode{A4}{$3$},\tikznode{A5}{$-3$}\},\{\tikznode{A6}{$2$},\tikznode{A7}{$-2$}\},\{\tikznode{A8}{$1$},\tikznode{A9}{$-1$}\}\\[6mm]
D'&\quad \{\tikznode{B0}{$5$},\tikznode{B1}{$-5$},\tikznode{B4}{$3$},\tikznode{B5}{$-3$}\}, \{\tikznode{B2}{$4$},\tikznode{B3}{$-4$}\}, \{\tikznode{B6}{$2$},\tikznode{B7}{$-2$}\},\{\tikznode{B8}{$1$},\tikznode{B9}{$-1$}\}\\[6mm]
E'&\quad \{\tikznode{C0}{$5$},\tikznode{C1}{$-5$} ,\tikznode{C4}{$3$},\tikznode{C5}{$-3$}\} , \{\tikznode{C2}{$4$},\tikznode{C3}{$-4$}, \tikznode{C8}{$1$},\tikznode{C9}{$-1$}\},\{\tikznode{C6}{$2$},\tikznode{C7}{$-2$}\}\\[6mm]
F'&\quad \{\tikznode{D0}{$5$}, \tikznode{D1}{$-5$},\tikznode{D4}{$3$},\tikznode{D5}{$-3$}, \tikznode{D6}{$2$},\tikznode{D7}{$-2$}\}, \{\tikznode{D2}{$4$},\tikznode{D3}{$-4$},\tikznode{D8}{$1$},\tikznode{D9}{$-1$} \}\\[6mm]
G'&\quad \{\tikznode{E0}{$5$}, \tikznode{E1}{$-5$},\tikznode{E4}{$3$},\tikznode{E5}{$-3$}, \tikznode{E6}{$2$},\tikznode{E7}{$-2$},\tikznode{E2}{$4$},\tikznode{E3}{$-4$},\tikznode{E8}{$1$},\tikznode{E9}{$-1$} \}       
\end{align*}}
\begin{tikzpicture}[remember picture,overlay,
rounded corners]
\path[->] (A0) edge (B0); 
\path[->] (A1) edge (B1); 
\path[->] (A2) edge (B2); 
\path[->] (A3) edge (B3); 
\path[->] (A4) edge (B4); 
\path[->] (A5) edge (B5); 
\path[->] (A6) edge (B6); 
\path[->] (A7) edge (B7); 
\path[->] (A8) edge (B8); 
\path[->] (A9) edge (B9);
%%%%%%%%%%%%%%%%%%%%%%%%%
\path[->] (B0) edge (C0); 
\path[->] (B1) edge (C1); 
\path[->] (B2) edge (C2); 
\path[->] (B3) edge (C3); 
\path[->] (B4) edge (C4); 
\path[->] (B5) edge (C5); 
\path[->] (B6) edge (C6); 
\path[->] (B7) edge (C7); 
\path[->] (B8) edge (C8); 
\path[->] (B9) edge (C9);
%%%%%%%%%%%%%%%%%
\path[->] (C0) edge (D0); 
\path[->] (C1) edge (D1); 
\path[->] (C2) edge (D2); 
\path[->] (C3) edge (D3); 
\path[->] (C4) edge (D4); 
\path[->] (C5) edge (D5); 
\path[->] (C6) edge (D6); 
\path[->] (C7) edge (D7); 
\path[->] (C8) edge (D8); 
\path[->] (C9) edge (D9);
%%%%%%%%%%%%%%%%%
\path[->] (D0) edge (E0); 
\path[->] (D1) edge (E1); 
\path[->] (D2) edge (E2); 
\path[->] (D3) edge (E3); 
\path[->] (D4) edge (E4); 
\path[->] (D5) edge (E5); 
\path[->] (D6) edge (E6); 
\path[->] (D7) edge (E7); 
\path[->] (D8) edge (E8); 
\path[->] (D9) edge (E9);
\end{tikzpicture} }
\end{parcolumns}
\caption{(left) ordering the entries on each multiset, (right) without ordering the entries on each multiset}
\label{ordering-entries-multisets}
\end{figure} 

Let us see that $\{5,4,3,2,1,-1,-2,-3,-4,-5\}$ is $C_{\mathbb{Z},6}-$realizable. Consider the sequence of moves given on the left of Figure~\ref{ordering-entries-multisets}
where the step from $A$ to $B$ is by applying five times rule~\eqref{move-i} of Theorem~\ref{guo-etc-2}, and the second step from $B$ to $C$ is by applying fifteen times rule~\eqref{move-iii} of Theorem~\ref{guo-etc-2}. These two steps can be performed without the interchange of the position of the values. From $C$ to $G$ we have  interchanges of the position of some values as indicated by the arrows. 
In each multiset the values are ordered in non-decreasing order. The first insight after following all the threads, is that sorting at every step adds complexity. So on the right of Figure~\ref{ordering-entries-multisets} we leave the multisets unsorted which minimizes the number of crossings.

Once we have unsorted multisets, the structure of applying rule~\eqref{move-i} repeatedly 
is that of a tangled tree, in which the root of the tree, $G'$, is the only multisets at the bottom (see left of Figure~\ref{tangled-untangled-tree}). As we have a tree, it is  possible to untangle the threads so that there are no crossings (see right of Figure~\ref{tangled-untangled-tree}).

\begin{figure}[h!] 
\begin{parcolumns}[rulebetween=true]{2}
    \colchunk[1]{
{\small\begin{align*}
C&\quad \tikznode{P5}{$\{5,-5\}$},\tikznode{P4}{$\{4,-4\}$}, \tikznode{P3}{$\{3,-3\}$},\tikznode{P2}{$\{2,-2\}$}, \tikznode{P1}{$\{1,-1\}$}\\[6mm]
D'&\quad \tikznode{Q53}{$\{5,-5,3,-3\}$}, \tikznode{Q4}{$\{4,-4\}$},\tikznode{Q2}{$\{2,-2\}$}, \tikznode{Q1}{$\{1,-1\}$} \\[6mm]
E'&\quad \tikznode{R53}{$\{5,-5,3,-3\}$} \tikznode{R41}{$\{4,-4,1,-1\}$}, \tikznode{R2}{$\{2,-2\}$} \\[6mm]
F'&\quad \tikznode{S532}{$\{5,-5,3,-3,2,-2\}$}, \tikznode{S41}{$\{4,-4,1,-1\}$}\\[4mm]
G'&\quad \tikznode{T53241}{$\{5,-5,3,-3,2,-2,4,-4,1,-1\}$}       
\end{align*}}
\begin{tikzpicture}[remember picture,overlay,
rounded corners]
\path[->] (P1) edge (Q1); 
\path[->] (P2) edge (Q2); 
\path[->] (P3) edge (Q53); 
\path[->] (P4) edge (Q4); 
\path[->] (P5) edge (Q53); 
\path[->] (Q53) edge (R53);
\path[->] (Q2) edge (R2);
\path[->] (Q4) edge (R41);
\path[->] (Q1) edge (R41);
\path[->] (R53) edge (S532);
\path[->] (R2) edge (S532);
\path[->] (R41) edge (S41);
\path[->] (S532) edge (T53241);
\path[->] (S41) edge (T53241);
%%%%%%%%%%%%%%%%%%%%%%%%%
\end{tikzpicture} }
\colchunk[2]{
{\small\begin{align*}
C''&\quad \tikznode{p5}{$\{5,-5\}$}, \tikznode{p3}{$\{3,-3\}$},\tikznode{p2}{$\{2,-2\}$},\tikznode{p4}{$\{4,-4\}$}, \tikznode{p1}{$\{1,-1\}$}\\[6mm]
D''&\quad \tikznode{q53}{$\{5,-5,3,-3\}$}, \tikznode{q2}{$\{2,-2\}$}, \tikznode{q4}{$\{4,-4\}$}, \tikznode{q1}{$\{1,-1\}$} \\[6mm]
E''&\quad \tikznode{r53}{$\{5,-5,3,-3\}$},\tikznode{r2}{$\{2,-2\}$}, \tikznode{r41}{$\{4,-4,1,-1\}$} \\[6mm]
F'&\quad \tikznode{s532}{$\{5,-5,3,-3,2,-2\}$}, \tikznode{s41}{$\{4,-4,1,-1\}$}\\[4mm]
G'&\quad \tikznode{t53241}{$\{5,-5,3,-3,2,-2,4,-4,1,-1\}$}       
\end{align*}}
\begin{tikzpicture}[remember picture,overlay,
rounded corners]
\path[->] (p1) edge (q1); 
\path[->] (p2) edge (q2); 
\path[->] (p3) edge (q53); 
\path[->] (p4) edge (q4); 
\path[->] (p5) edge (q53); 
\path[->] (q53) edge (r53);
\path[->] (q2) edge (r2);
\path[->] (q4) edge (r41);
\path[->] (q1) edge (r41);
\path[->] (r53) edge (s532);
\path[->] (r2) edge (s532);
\path[->] (r41) edge (s41);
\path[->] (s532) edge (t53241);
\path[->] (s41) edge (t53241);
%%%%%%%%%%%%%%%%%%%%%%%%%
\end{tikzpicture}
 }
\end{parcolumns}
\caption{(left) tangled tree, (right) untangled tree.}
\label{tangled-untangled-tree}
\end{figure}
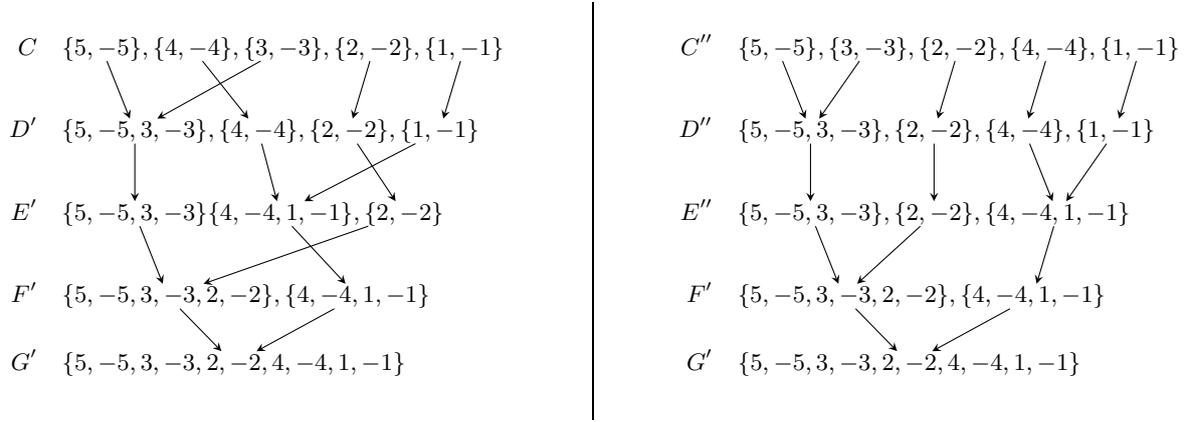    

%%%%%%%%%%%%%%%%%%%%

On the right of Figure~\ref{tangled-untangled-tree} we observe that the position of the entries does not change across the different steps. This allow us to change the multisets to ordered list directly as follows:
\begin{align} 
\nonumber \Phi_{A}=& ((0),(0),(0),(0),(0),(0),(0),(0),(0),(0) )\\
\nonumber \Phi_{B}=& ((0,0),(0,0),(0,0),(0,0),(0,0) )\\
\nonumber \Phi_{C''}=& ((5,-5), (3,-3), (2,-2), (4,-4), (1,-1))\\
\label{Ex_tuples} \Phi_{D''}=& ((5,-5,3,-3), (2,-2), (4,-4), (1,-1))\\
\nonumber \Phi_{E''}=& ((5,-5,3,-3), (2,-2), (4,-4,1,-1)\\
\nonumber \Phi_{F'}=& ((5,-5,3,-3,2,-2),(4,-4,1,-1))\\
\nonumber \Phi_{G'}=& (5,-5,3,-3,2,-2,4,-4,1,-1)         
\end{align}

%In this example we  apply mainly rule~\eqref{move-i} of Theorem~\ref{guo-etc-2}. The reason for this is that rule~\eqref{move-i} is the only rule for which it can be necessary a reordering in which the multisets appear. For all the other rules, from rule~\eqref{move-ii} to rule~\eqref{move-vi} of Theorem~\ref{guo-etc-2}, {\colm it is not necessary to interchange the order in which the multisets appear.}

In summary, the aim of this process is to go from something without order (multisets) to something ordered (states). While everything regarding multisets (the collection of multisets and the values on each multiset) has no imposed order, everything regarding states (the collection of tuples and the values on each tuple) have an imposed order. 

In this proof, the transition between multisets and states is done in several stages. The first stage is to realize that presenting the values in non-decreasing order adds complexity, so we stop doing it (this has been done in Figure~\ref{ordering-entries-multisets}). The second stage is to find a correct way to order the multisets so that whenever a rule~\eqref{move-i} is applied then it joins two contiguous tuples (this reordering is possible and has been done in Figure~\ref{tangled-untangled-tree}). On the third stage we pass from multisets  (as on the right of Figure~\ref{tangled-untangled-tree}) to  states (as in~\eqref{Ex_tuples}). Observe that in the sequence of states each entry remains in the same position from the initial state to the final state.

A final remark, rules~\eqref{move-ii}--\eqref{move-vi} of Theorem~\ref{guo-etc-2} give no problems in the overall  process because they only modify some specific entries which do not require a change of positions. 
\end{proof}

Note that the state $\Lambda=(1,-1,1,-1)$ is  $C^*_{\mathbb{Z},6}-$realizable since $(1,-1)$ and $(1,-1)$ are both $C^*_{\mathbb{Z},6}-$realizable and can be joined by a type 1 move. But let us show that changing  the order of the entries we can obtain the state $\Lambda'=(1,1,-1,-1)$ which is not $C^*_{\mathbb{Z},6}-$realizable. Suppose that we could achieve $\Lambda'$ starting from $((0),(0),(0),(0))$ by a sequence of moves of type 1 to 6. First note that all the elements of $\Lambda'$ add to 0 so there are no moves of type 2, 5 or 6. And since $\Lambda'$ has two dominant entries equal to 1 then the last move is not of type 3 or 4. So the last move is necessarily of type 1: (a) joining  $(1)$ with $(1,-1,-1)$; (b) joining  $(1,1)$ with $(-1,-1)$; or (c) joining $(1,1,-1)$ with $(-1)$. In all cases the second tuple is not $C^*_{\mathbb{Z},6}-$realizable because the sum of its elements is negative.

So this is a case of two states with the same numbers, where one can be reached by a sequence of moves of types 1 to 6, while the other one can not be reached. This happens because none of the moves of type 1 to 6 perform a reordering of the elements of the tuples.

\medskip

 As in Lemma~\ref{Crealiz_C6realiz}, it is possible to  see how  $C_{\mathbb{Z},3}-$realizability  is closely related to  $C^*_{\mathbb{Z},3}-$realizability.
\begin{lemma} \label{Crealiz_C3realiz}
    A multiset $\{\lambda_1,\ldots,\lambda_n\}$ is $C_{\mathbb{Z},3}-$realizable if and only if there is at least one permutation $\sigma$ of $n$ elements such that the state $(\lambda_{\sigma(1)},\ldots,\lambda_{\sigma(n)})\in\mathbb{Z}^{n}$  is $C^*_{\mathbb{Z},3}-$realizable.  
\end{lemma}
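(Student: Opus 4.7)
The plan is to recycle the argument used for Lemma~\ref{Crealiz_C6realiz}: the tree-untangling procedure described there never invokes rules or moves of types 4--6, so it applies verbatim after restricting to types 1, 2, 3. The two directions then collapse to an essentially bookkeeping exercise.

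For sufficiency, assume that for some permutation $\sigma$ the state $(\lambda_{\sigma(1)},\ldots,\lambda_{\sigma(n)})$ is reached from $\bigl((0),\ldots,(0)\bigr)$ by a sequence of valid moves of types 1, 2, 3. I would map each state to the multiset of its entries and each move to the corresponding rule of Theorem~\ref{guo-etc-2}, namely type 1 to rule~\eqref{move-i}, type 2 to rule~\eqref{move-ii}, and type 3 to rule~\eqref{move-iii}. The validity conditions of types 2 and 3 (dominance of the $i^{th}$ position, and $\Phi[j]\le 0$ for type 3) translate exactly into the Perron-root hypothesis and the sign hypothesis $\lambda_2\le 0$ of rules~\eqref{move-ii} and~\eqref{move-iii}. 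This produces a $C_{\mathbb{Z},3}-$realization of $\{\lambda_1,\ldots,\lambda_n\}$.

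For necessity, start from any $C_{\mathbb{Z},3}-$realization of $\{\lambda_1,\ldots,\lambda_n\}$ built from the basic sets $\{0\},\ldots,\{0\}$ via rules~\eqref{move-i}--\eqref{move-iii}. Isolating the applications of rule~\eqref{move-i} gives a merge tree $T$ whose leaves are the initial $n$ singletons and whose root is the final multiset. Applications of rules~\eqref{move-ii} and~\eqref{move-iii} only modify entries inside a single multiset and therefore leave $T$ unchanged. Exactly as on the right of Figure~\ref{tangled-untangled-tree}, I would untangle $T$ by reordering the leaves so that every join in $T$ combines two \emph{consecutive} multisets in the ambient list; this reordering determines a permutation $\sigma$ of the final entries. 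Translating multisets into tuples while preserving the new ordering, each application of rule~\eqref{move-i} becomes a type 1 move on two adjacent tuples, and each application of rule~\eqref{move-ii} or~\eqref{move-iii} becomes the corresponding type 2 or type 3 move at the same position. Hence $(\lambda_{\sigma(1)},\ldots,\lambda_{\sigma(n)})$ is $C^*_{\mathbb{Z},3}-$realizable.

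The only point requiring attention is that every validity condition on the state side matches its counterpart on the multiset side. Dominance of a position in its tuple corresponds, under the translation, to being the Perron root of the associated multiset, and the inequality $\Phi[j]\le 0$ for type 3 matches $\lambda_2\le 0$ in rule~\eqref{move-iii}. Since no other move types are in play, there is no further obstacle beyond what was already handled in the proof of Lemma~\ref{Crealiz_C6realiz}.
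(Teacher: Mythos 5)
Your proposal is correct and follows exactly the route the paper intends: the paper states this lemma without a separate proof, remarking only that it goes ``as in Lemma~\ref{Crealiz_C6realiz}'', and your restriction of that tree-untangling argument to moves of types 1--3 (noting that rules of types 2 and 3 never disturb the merge tree, and that dominance and the sign condition $\Phi[j]\le 0$ translate precisely into the Perron-root and $\lambda_2\le 0$ hypotheses) is exactly the intended adaptation. No gaps.
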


Lemmas~\ref{Crealiz_C6realiz} and~\ref{Crealiz_C3realiz} permit us to translate the language of $C_{\mathbb{Z},6}$ and $C_{\mathbb{Z},3}-$realizable multisets into the language of $C^*_{\mathbb{Z},6}$ and $C^*_{\mathbb{Z},3}-$realizable states. They  will be useful in the proof of Theorem~\ref{avoidingtype456}.

%%%%%%%%%%%%%%%%%%%%%%%%

\section{Swapping moves} \label{Technical_Lemmas}

From now on, in all sequences of admissible moves we will employ the following notation:
\begin{itemize}
    \item If the move is inside a  circle 
        $$
    \Phi  \xrightarrow{\text{\circled{$M$}}} M(\Phi)
    $$
then $M$ is admissible and invalid for $\Phi$. For instance $(1,2,7)  \xrightarrow{\text{\circled{$T^{2}_2$}}} (1,3,7)$. 

    \item If the move  has no circle around it
        $$\Phi  \xrightarrow{M} M(\Phi)$$
then $M$ is admissible and valid for $\Phi$. For instance $(1,2,7)  \xrightarrow{T^{3}_2} (1,2,8)$. 
\item If the move is inside a dashed circle
    $$
    \Phi  \xrightarrow{\text{\circledD{$M$}}} M(\Phi)
    $$
    then $M$ is admissible for $\Phi$, although we do not known yet if it is valid or invalid. This notation will be useful for some proofs in which the validity or invalidity of $M$ has to be determined.
\end{itemize}

Let   $\Phi$ be a state and let $M_1$ and $M_2$ be two moves of types 1 to 6 such that $M_1$ is admissible (valid or invalid) for $\Phi$ and $M_2$ is admissible (valid or invalid) for $M_1(\Phi)$. In order to prove the main result of this work, Theorem~\ref{avoidingtype456}, we are interested in understanding the effect on admissibility and validity when we swap the order in which  $M_1$ and $M_2$ act on $\Phi$.

The first result that we will see is that if in a sequence  of two admissible moves, valid or invalid, we perform a swap then in  most cases we obtain  again a sequence  of two admissible moves. 

\begin{lemma}\label{well_defined_swap}
   Let $\Phi\in \mathbb{Z}^{n,p}$ and consider a sequence  of admissible (valid or invalid) moves 
   \begin{align}  \label{seq_adm}
    &\Phi \xrightarrow{\text{\circledD{$M_1$}}} M_1(\Phi) \xrightarrow{\text{\circledD{$M_2$}}} M_2(M_1(\Phi)).
\end{align}
If  we   swap  $M_1$ and $M_2$  then we will obtain, except when   $M_1=T_1^{(p-1)^\frown (p)}$ and $M_2$ is of type 1, a sequence   
   \begin{align}  \label{seq_adm_swap}
    &\Phi \xrightarrow{\text{\circledD{$M_2$}}} M_2(\Phi) \xrightarrow{\text{\circledD{$M_1$}}} M_1(M_2(\Phi))
\end{align}
of admissible (valid or invalid) moves. Moreover, if $M_1$ and $M_2$ are not both of type 1 then $$M_1(M_2(\Phi))=M_2(M_1(\Phi)).$$
\end{lemma}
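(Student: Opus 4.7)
The plan is to reduce admissibility of any single move to a combinatorial condition on the pair $(n,p)$: a type $1$ move with parameter $d$ is admissible for a state in $\mathbb{Z}^{n,p}$ iff $1\le d\le p-1$, while a move of types $2$--$6$ is admissible as long as its super-indices lie in $\{1,\ldots,n\}$. Moves of types $2$--$6$ preserve both $n$ and $p$, whereas a type $1$ move preserves $n$ and decreases $p$ by one. Consequently admissibility is stable under any reordering that does not shrink $p$ below the parameter of a subsequent type $1$ move.

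I would then split into three cases according to how many of $M_1,M_2$ are of type $1$. If neither is of type $1$, their admissibility conditions do not involve the state at all, so the swap trivially preserves admissibility; moreover each such move is an additive modification of one or two specified positional values while leaving the bracketing into tuples intact, and two such additive modifications at possibly overlapping positions commute, giving $M_1(M_2(\Phi))=M_2(M_1(\Phi))$. If exactly one of $M_1,M_2$, say $M_1=T_1^{(d)^\frown(d+1)}$, is of type $1$, then admissibility of the original sequence forces $d\le p-1$, and the same bound still holds after swapping because the other move (of types $2$--$6$) preserves $p$; commutativity is immediate because the type $1$ move affects only the bracketing into tuples without altering any value, while the move of types $2$--$6$ affects only values without altering the bracketing.

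The delicate case is when both $M_1$ and $M_2$ are of type $1$. Writing $M_1=T_1^{(d_1)^\frown(d_1+1)}$ and $M_2=T_1^{(d_2)^\frown(d_2+1)}$, admissibility of the original sequence yields $d_1\le p-1$ and, since $M_1(\Phi)\in\mathbb{Z}^{n,p-1}$, $d_2\le p-2$. For the swapped sequence we need $d_2\le p-1$ (automatic from $d_2\le p-2$) and $d_1\le p-2$; the latter condition fails precisely when $d_1=p-1$, that is, when $M_1=T_1^{(p-1)^\frown(p)}$, which is exactly the excluded case. In every other configuration the swapped sequence is admissible. The main obstacle is simply keeping clear that in this last subcase no commutativity assertion is made: two type $1$ moves generally fail to commute because a swap can alter which tuples a fixed index refers to (for example when $d_2<d_1$, applying $M_2$ first shifts the meaning of ``the $d_1$-th tuple''), which is exactly why the identity $M_1(M_2(\Phi))=M_2(M_1(\Phi))$ is restricted to sequences in which at least one of the two moves is of types $2$--$6$.
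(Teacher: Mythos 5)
Your proposal is correct and follows essentially the same route as the paper: admissibility of moves of types 2--6 depends only on $n$ (which every move preserves), admissibility of a type 1 move with parameter $d$ depends only on $d\le p-1$ for the current number of tuples, and the only obstruction is the both-type-1 case with $d_1=p-1$; the commutativity argument (values versus bracketing, and commuting additive updates) is also the paper's. No gaps.
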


\begin{proof}  Note that a move $M$ of type 2 to 6 applied to a state of $n$ positions is admissible if and only if $M=T_2^i$ for some $1\leq i \leq n$ or $M=T_h^{jk}$ for some $3\leq h \leq 6$ and some $j\neq k$ with $1\leq j, k \leq n$. As the state $\Phi$ has $n$ positions, then all the states of  sequences (\ref{seq_adm}) and (\ref{seq_adm_swap}) will have also $n$ positions. Therefore, an admissible move of type 2 to 6 in sequence (\ref{seq_adm}) will remain admissible in sequence (\ref{seq_adm_swap}) after a swap.

Let us see what happens to $M_1$ and to $M_2$ when at least one of them is of type 1:

\begin{itemize}
    \item $M_1$ is a move of type 2 to 6 and $M_2=T_1^{(d)^\frown (d+1)}$ with $1\leq d< p$. After the swap $M_2$ is admissible for $\Phi$ since $\Phi$  has $p$ tuples.

    \item $M_1=T_1^{(d)^\frown (d+1)}$ with $1\leq d< p$ and $M_2$ is a move of type 2 to 6. After the swap $M_1$ is admissible for $M_2(\Phi)$  since $M_2(\Phi)$  has $p$ tuples.

    \item $M_1=T_1^{(d)^\frown (d+1)}$ with $1\leq d< p-1$ and $M_2=T_1^{(g)^\frown (g+1)}$ with $1\leq g< p-1$. After the swap $M_2$ is admissible for $\Phi$ since $\Phi$  has $p$ tuples, and  $M_1$ is  admissible for $M_2(\Phi)$ since $M_2(\Phi)$ has $p-1$ tuples.
    
    \item $M_1=T_1^{(p-1)^\frown (p)}$ and $M_2=T_1^{(d)^\frown (d+1)}$ with $1\leq d < p-1$. After the swap $M_2$ is admissible for $\Phi$ since $\Phi$  has $p$ tuples,  but $M_1$ becomes not admissible for $M_2(\Phi)$ since  $M_2(\Phi)$  has  $p-1$ tuples. 

\end{itemize}

Finally, let us see that if $M_1$ and $M_2$ are not both of type 1 then $M_1(M_2(\Phi))=M_2(M_1(\Phi))$. Consider two possibilities:

\begin{itemize}
    \item If one of the moves is of type 1, then both moves do not interfere with each other since one of them will modify one or two  positions and the other one will join two tuples. So the order in which they are applied does not matter. 
    \item If none of the moves is of type 1, then each move  will modify one or two positions of the state. And the the order in which the modifications are performed does not vary the final result. 
\end{itemize} 
\end{proof}

We provide one example of swapping two admissible moves  $M_1$ and $M_2$ of type 1:
$$ 
    ((0),(1),(2),(3)) \xrightarrow{T_1^{(1)^\frown (2)}} ((0,1),(2),(3)) \xrightarrow{T_1^{(2)^\frown (3)}} ((0,1),(2,3))
$$
$$((0),(1),(2),(3)) \xrightarrow{T_1^{(2)^\frown (3)}} ((0),(1,2),(3)) \xrightarrow{T_1^{(1)^\frown (2)}} ((0,1,2),(3)),
$$ 
and note that $M_1(M_2(\Phi))\neq M_2(M_1(\Phi))$. In the proof of the main theorem, we will never swap two admissible moves of Type 1, so we will never encounter this \emph{non-commutativity}.
\medskip

\subsection{Swapping a valid move of type 2}

Let us see how in a sequence of valid moves of types 1 to 3, we can swap  each type 2 move to bring it closer to the beginning of the sequence.

\begin{lemma} \label{swapT2}
Let $\Phi_1\in \mathbb{Z}^{n,p}$ and let us have a sequence of  valid moves
\begin{align}  \label{M_and_T2}
    &\Phi_1 \xrightarrow{M} \Phi_2 \xrightarrow{T^{i}_2} \Phi_{3},
\end{align}
where $M$ is of type 1, 2 or 3. If we swap $M$ and $T^{i}_2$ we obtain  the sequence of two valid moves 
\begin{align} \label{swap_two_moves}
   &\Phi_1 \xrightarrow{T^{i}_2} \Phi'_2 \xrightarrow{M} \Phi'_{3}
\end{align}
with $\Phi'_3=\Phi_3$.
\end{lemma}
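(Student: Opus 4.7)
The plan is to handle admissibility and the equality $\Phi'_3 = \Phi_3$ with a single invocation of Lemma~\ref{well_defined_swap}, and then focus all the effort on validity. Since $M$ and $T^i_2$ are never both of type 1 (as $T^i_2$ is of type 2), Lemma~\ref{well_defined_swap} immediately gives that the swapped sequence~\eqref{swap_two_moves} consists of admissible moves and that $T^i_2(M(\Phi_1)) = M(T^i_2(\Phi_1))$, so $\Phi'_3 = \Phi_3$ is automatic.

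The validity of $T^i_2$ on $\Phi_1$ is immediate from the Dominance law applied to the valid move $\Phi_1 \xrightarrow{M} \Phi_2$: position $i$ is dominant in $\Phi_2$ (because $T^i_2$ is valid there), hence it was already dominant in $\Phi_1$.

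The real work lies in checking that $M$ remains valid when applied to $\Phi'_2 = T^i_2(\Phi_1)$. I would split into cases according to the type of $M$. The type 1 case is trivial, since admissibility already grants validity. For $M = T^j_2$ or $M = T^{jk}_3$, the key observation is that $T^i_2$ only modifies the value at position $i$ and leaves the partition of positions into tuples untouched, so the only validity conditions of $M$ that could be disturbed concern the dominance of $j$ and, for type 3, the sign of $\Phi[k]$. A short case analysis on whether $i$ lies in the same tuple as $j$ and $k$ then settles things: if $i = j$, one checks directly that $j$ remains dominant in $\Phi'_2$ since raising position $i = j$ by $1$ only strengthens dominance; if $i$ lies in a tuple different from the tuple of $j$ (and hence of $k$), then $T^i_2$ does not alter any value relevant to $M$, so all of $M$'s validity conditions are inherited verbatim from $\Phi_1$. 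The remaining subcase (in which $i \neq j$ but $i$ shares a tuple with $j$, or $i = k$) is ruled out by contradiction: then $i$ and $j$ would both be dominant in $\Phi_1$ and hence satisfy $\Phi_1[i] = \Phi_1[j]$, so the action of $M$ would strictly raise position $j$ above position $i$ in $\Phi_2$, contradicting the hypothesis that $T^i_2$ is valid on $\Phi_2$.

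The main potential obstacle is the bookkeeping in the type 3 subcase, where one must simultaneously track the dominance of position $j$, the sign condition $\Phi[k] \leq 0$, and the tuple membership of $i$, $j$ and $k$; but because $T^i_2$ changes only a single coordinate and preserves the tuple structure, the entire case analysis collapses to the single ``loss of dominance at $i$'' argument above.
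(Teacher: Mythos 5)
Your proposal is correct and follows essentially the same route as the paper's proof: validity of $T^i_2$ on $\Phi_1$ via the dominance law, then a case split on the type of $M$ and on whether $i=j$, $i$ lies in a different tuple from $j$, or $i\neq j$ shares a tuple with $j$, with the last subcase ruled out by a dominance contradiction (the paper argues directly that position $j$ is strictly dominant in $\Phi_2$, while you pass through $\Phi_1[i]=\Phi_1[j]$ first — the same contradiction). The appeal to Lemma~\ref{well_defined_swap} for admissibility and for $\Phi'_3=\Phi_3$ matches the paper as well.
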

\begin{proof}
As $\Phi_{2} \xrightarrow{T^{i}_2} \Phi_{3}$ is valid, then the $i^{th}$ position is  dominant in $\Phi_2$. By the dominance law the $i^{th}$ position is also dominant in $\Phi_1$, and therefore $\Phi_{1} \xrightarrow{T^{i}_2} \Phi'_{2}$ is valid. It remains to prove that $\Phi'_{2} \xrightarrow{M} \Phi'_{3}$ is also valid:
%Now we  analyze the swapping of $M$ and $T_2^i$  depending of the type of move that is $M$: 
\begin{enumerate}
    \item  Let $M$ be a move of type 1. Then $\Phi'_{2} \xrightarrow{M} \Phi'_{3}$ is  admissible by Lemma~\ref{well_defined_swap}. So it is valid automatically.
    \item Let $M$  be a move $T_2^j$ of type 2.
    \begin{enumerate}[(a)]
        \item   If  $i=j$, then both moves are equal, so they can be swapped and remain valid.
        \item If $i\neq j$ and the $i^{th}$ and  $j^{th}$ positions  belong to   different tuples in $\Phi_1$, then  the two moves of sequence~\eqref{M_and_T2} are independent. So they can be swapped and remain valid.
        \item   Let us see that it is not possible for $i\neq j$ that the $i^{th}$ and the $j^{th}$ position  belong to the same tuple in $\Phi_1$. Since $\Phi_{1} \xrightarrow{T^{j}_2} \Phi_{2}$ is valid, then the $j^{th}$ position is strictly greater than any other value in its tuple of $\Phi_2$. This makes  $T^{i}_2$ invalid for $\Phi_2$, which contradicts~\eqref{M_and_T2}.
        
        %$\Phi_{2} \xrightarrow{T^{i}_2} \Phi_{3}$ invalid in~\eqref{M_and_T2}, a contradiction.

    \end{enumerate}
    
    \item  Let $M$ be a move $T_3^{jk}$ of type 3.
    \begin{enumerate}[(a)]
        \item Let $i=j$. As $\Phi_{1} \xrightarrow{T^{i}_2} \Phi'_{2}$ is valid, then the $i^{th}$ position is dominant in $\Phi'_2$ and so   $\Phi'_{2} \xrightarrow{T_3^{ik}} \Phi'_{3}$ is valid.
        \item If $i\neq j$ and the $i^{th}$ and  $j^{th}$ positions  belong to   different tuples in $\Phi_1$, then  the two moves of sequence~\eqref{M_and_T2} are independent. So they can be swapped and remain valid.
        \item   Let us see that it is not possible for $i\neq j$ that the $i^{th}$ and the $j^{th}$ position  belong to the same tuple in $\Phi_1$. Since $\Phi_{1} \xrightarrow{T^{jk}_3} \Phi_{2}$ is valid, then the $j^{th}$ position is strictly greater than any other value in its tuple of $\Phi_2$. This makes  $T^{i}_2$ invalid for $\Phi_2$, which contradicts~\eqref{M_and_T2}.

    \end{enumerate}
\end{enumerate}
\end{proof}

The counterpart of Lemma~\ref{swapT2} is false as we will see in the following example. We begin with the following sequence of valid moves:
$$\big((1,2),(7)\big) \ \xrightarrow{T^{2}_2} \ \big((1,3),(7)\big) \ \xrightarrow{T^{{(1)^\frown(2)}}_1}\ \big(1,3,7\big).$$
After the swap, the move of type 2 becomes invalid:
$$\quad \big((1,2),(7)\big) \ \xrightarrow{T^{{(1)^\frown(2)}}_1} \ \big(1,2,7\big) \ \xrightarrow{\text{\circled{$T^{2}_2$}}} \big(1,3, 7\big).$$

An interesting consequence of Lemma~\ref{swapT2} is that for a  $C^*_{\mathbb{Z},3}-$realizable state  all type 2 moves can be performed at the beginning of the sequence. 

\begin{corollary}\label{T2_begining}
    If $\Phi$ is a $C^*_{\mathbb{Z},3}-$realizable state, then $\Phi$ can be obtained by a sequence of moves of type 1 to 3, where all type 2 moves are performed at the beginning of the sequence.
\end{corollary}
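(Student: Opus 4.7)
The plan is to prove the corollary by a bubble-sort style argument whose single ``move'' at each step is exactly the swap provided by Lemma~\ref{swapT2}. Since $\Phi$ is $C^*_{\mathbb{Z},3}-$realizable, fix some sequence
\[
((0),\ldots,(0)) = \Phi_0 \xrightarrow{M_1} \Phi_1 \xrightarrow{M_2} \cdots \xrightarrow{M_k} \Phi_k = \Phi,
\]
of valid moves with each $M_i$ of type $1$, $2$, or $3$. The goal is to transform this sequence, while keeping it a valid realization of $\Phi$, into one whose first few moves are all of type $2$ and whose remaining moves are all of type $1$ or $3$.

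First I would define an \emph{inversion} of the sequence to be a pair of indices $s<t$ such that $M_s$ is of type $1$ or $3$ and $M_t$ is of type $2$, and let $\mathrm{inv}(M_1,\ldots,M_k)$ denote the total number of inversions. I would then show, by induction on $\mathrm{inv}(M_1,\ldots,M_k)$, that the sequence can be rewritten as required. If $\mathrm{inv}=0$, there is no type~$2$ move preceded by a non-type-$2$ move, so all type~$2$ moves appear before any move of type $1$ or $3$, which is exactly the conclusion.

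For the inductive step, pick the smallest index $t$ such that $M_t$ is of type $2$ and $M_{t-1}$ is of type $1$ or $3$; such a $t$ exists because $\mathrm{inv}>0$. Applying Lemma~\ref{swapT2} to the two-move segment
\[
\Phi_{t-2}\xrightarrow{M_{t-1}}\Phi_{t-1}\xrightarrow{M_t}\Phi_t,
\]
we may replace it by
\[
\Phi_{t-2}\xrightarrow{M_t}\Phi'_{t-1}\xrightarrow{M_{t-1}}\Phi_t,
\]
with both new moves valid and the endpoint $\Phi_t$ unchanged. Splicing this segment back into the original sequence yields a new sequence of valid moves of types~$1$--$3$ that still realizes $\Phi$. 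Since swapping two adjacent moves modifies only the inversions involving those two positions, and we have turned a (non-type-$2$, type-$2$) adjacency into a (type-$2$, non-type-$2$) adjacency, the inversion count strictly decreases by one. By the induction hypothesis the new sequence can be reorganized as claimed, completing the proof.

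The main obstacle is essentially absent: all the delicate work (ensuring that the swap preserves validity and yields the same state) is already packaged in Lemma~\ref{swapT2}. The only thing to watch is that we apply that lemma in the correct orientation, namely with the type-$2$ move occurring as the \emph{second} of the two swapped moves, which is precisely the setting treated in Lemma~\ref{swapT2}; the converse direction fails, as the counterexample immediately after the lemma shows, so one cannot, in general, symmetrically push type~$2$ moves to the \emph{end} of the sequence.
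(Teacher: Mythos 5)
Your proposal is correct and takes essentially the same approach as the paper: the paper's proof is simply ``apply Lemma~\ref{swapT2} repeatedly to move all type 2 moves to the beginning,'' and your inversion-count induction is just a careful formalization of that same repeated-swap argument, applied in the correct orientation (type 2 move second).
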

\begin{proof}
    If $\Phi$ is a $C^*_{\mathbb{Z},3}-$realizable state, then $\Phi$ can be obtained by a sequence of moves of types 1 to 3. Applying repeatedly  Lemma~\ref{swapT2} we can take all type 2 moves to the beginning of the sequence.
\end{proof}
We will talk more about the implications of Corollary~\ref{T2_begining} in the final remarks.

\subsection{Swapping an invalid move of type 2}

The most frequent  swaps that we will perform in the proof of the main result (Theorem~\ref{avoidingtype456}) involve first a valid move of type 1, 2 or 3 followed by an invalid move of type 2. In the next three lemmas we analyze the result of these swaps.

\begin{lemma}\label{T1validT2invalid}
Let $\Phi_1\in \mathbb{Z}^{n,p}$  and let  
\begin{align*} %\label{T1_T2i}
    &\Phi_1 \xrightarrow{T_1^{(d)^\frown (d+1)}} \Phi_2 \xrightarrow{\text{\circled{$T^{i}_2$}}} \Phi_3
\end{align*} 
be a sequence where $T_1^{(d)^\frown (d+1)}$ is valid and $T^{i}_2$ is invalid. If we  swap both moves then we obtain the sequence 
\begin{align*} %\label{T2i_T1}
    &\Phi_1 \xrightarrow{\text{\circledD{$T^{i}_2$}}} T_2^i(\Phi_1) \xrightarrow{T_1^{(d)^\frown (d+1)}} \Phi_3
\end{align*} 
where $T_1^{(d)^\frown (d+1)}$  remains valid for $T_2^i(\Phi_1)$ and $T_2^i$ can be valid or invalid.
    
\end{lemma}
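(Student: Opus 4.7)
The plan is to reduce most of the work to Lemma~\ref{well_defined_swap} together with the observation that Type 1 moves carry no validity condition beyond admissibility. First I would invoke Lemma~\ref{well_defined_swap}: the sequence has $M_1=T_1^{(d)^\frown(d+1)}$ and $M_2=T_2^i$, which are not both of type 1 and do not fall into the excluded case $M_1=T_1^{(p-1)^\frown(p)}$ with $M_2$ of type 1. Hence after the swap the sequence
\[
\Phi_1 \xrightarrow{\text{\circledD{$T_2^i$}}} T_2^i(\Phi_1) \xrightarrow{T_1^{(d)^\frown(d+1)}} T_1^{(d)^\frown(d+1)}(T_2^i(\Phi_1))
\]
consists of admissible moves, and moreover the final state equals $T_2^i(T_1^{(d)^\frown(d+1)}(\Phi_1))=\Phi_3$.

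Next I would observe that, by the very definition of a Type 1 move, the only requirement for $T_1^{(d)^\frown(d+1)}$ to be valid is that it be admissible, i.e.\ that the ambient state have at least $d+1$ tuples. Since $T_2^i$ does not change the number of tuples, $T_2^i(\Phi_1)$ has the same $p$ tuples as $\Phi_1$, so $T_1^{(d)^\frown(d+1)}$ remains valid after the swap.

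It only remains to show that $T_2^i$ may be either valid or invalid on $\Phi_1$. To exhibit both possibilities I would give two small examples. For the valid case, take $\Phi_1=\big((5),(7)\big)$ with $i=1$: here the $1^{st}$ position is trivially dominant in its singleton tuple, so $T_2^1$ is valid on $\Phi_1$, yet after joining we get $\big((5,7)\big)$ where $5$ is no longer dominant, so $T_2^1$ is invalid on $\Phi_2$, matching the hypothesis. For the invalid case, take $\Phi_1=\big((5,3),(7)\big)$ with $i=2$: the value $3$ is not dominant in $(5,3)$, so $T_2^2$ is invalid for $\Phi_1$, and it is still invalid on $\Phi_2=\big((5,3,7)\big)$.

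There is essentially no hard step here: the swap well-definedness and equality of final states are exactly what Lemma~\ref{well_defined_swap} was designed to deliver, Type 1 validity is automatic from the definition, and the examples for the last clause are immediate. The only subtle point — and the one worth emphasizing — is that the dominance law does not apply in reverse through an invalid move, which is precisely why the validity status of $T_2^i$ on $\Phi_1$ is genuinely indeterminate and must be illustrated with both examples rather than proved one way or the other.
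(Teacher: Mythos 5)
Your proposal is correct and follows essentially the same route as the paper: the paper likewise notes that $T_2^i$ does not alter the tuple structure (so the type 1 move stays valid) and that the $i^{th}$ position of $\Phi_1$ sits in an equal or smaller tuple than in $\Phi_2$, which is exactly why its dominance, and hence the validity of $T_2^i$, is undetermined. Your two examples correctly witness both outcomes; the only quibble is your closing remark attributing the indeterminacy to ``an invalid move'' --- the move crossed here ($T_1^{(d)^\frown(d+1)}$) is valid, and the indeterminacy comes simply from the one-directional nature of the dominance law.
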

\begin{proof}
    As $T_2^i$ does not change the tuples of $\Phi_1$ then $T_1^{(d)^\frown (d+1)}$ will remain valid for $T_2^i(\Phi_1)$. On the other hand, as $T_2^i$ is invalid then the $i^{th}$  position is not dominant in $\Phi_2$. Since the $i^{th}$  position  in $\Phi_1$ lies in a  equal or smaller tuple than in $\Phi_2$, then we have two possibilities: if the $i^{th}$  position  in $\Phi_1$ is dominant then $T_2^i$ will become valid for $\Phi_1$, and if the $i^{th}$  position  in $\Phi_1$ remains not dominant then $T_2^i$ will become invalid for $\Phi_1$.
\end{proof}

\begin{lemma} \label{T2validT2invalid}
Let $\Phi_1\in \mathbb{Z}^{n,p}$ and let 
\begin{align} \label{T2iT2j}
    &\Phi_1 \xrightarrow{T^{i}_2} \Phi_2 \xrightarrow{\text{\circled{$T^{k}_2$}}} \Phi_3
\end{align} 
be a sequence where $T_2^i$ is valid  and  $T^{k}_2$ is invalid. If we  swap both moves then  it is not possible to obtain a sequence where both moves become invalid.
\end{lemma}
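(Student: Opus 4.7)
I would argue by contradiction: assume that after the swap both $T_2^k$ applied to $\Phi_1$ and $T_2^i$ applied to $T_2^k(\Phi_1)$ turn out to be invalid. The validity of $T_2^i$ for $\Phi_1$ provides the dominance inequality $\Phi_1[i]\geq |\Phi_1[m]|$ for every position $m$ in the tuple containing the $i$th position; this is the only input I need from the original sequence \eqref{T2iT2j}. Note also that if $i=k$ the two moves are equal, so the hypothesis that one is valid and the other is invalid for the same state in the natural order already forces $i\neq k$.

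\textbf{First step (easy case).} If the $i$th and $k$th positions lie in different tuples of $\Phi_1$, then $T_2^k$ does not touch the tuple containing the $i$th position, so dominance at position $i$ is preserved and $T_2^i$ remains valid for $T_2^k(\Phi_1)$, contradicting the assumption. From now on both positions can be assumed to lie in the same tuple, and so the inequality $\Phi_1[i]\geq |\Phi_1[k]|$ is available.

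\textbf{Second step (main case).} In the swapped sequence only the $k$th position changes in absolute value, with $|\Phi_1[k]|$ becoming $|\Phi_1[k]+1|$; all other positions of the tuple are unchanged. A sign analysis gives $|\Phi_1[k]+1|\leq |\Phi_1[k]|$ when $\Phi_1[k]<0$, and $|\Phi_1[k]+1|=|\Phi_1[k]|+1$ when $\Phi_1[k]\geq 0$. In the first situation position $i$ still dominates every entry of its tuple in $T_2^k(\Phi_1)$, so $T_2^i$ is valid there and we are done. In the second situation the only way position $i$ could lose dominance is through $\Phi_1[i]<\Phi_1[k]+1$, which combined with $\Phi_1[i]\geq \Phi_1[k]$ forces $\Phi_1[i]=\Phi_1[k]\geq 0$. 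But then $\Phi_1[k]=\Phi_1[i]\geq |\Phi_1[m]|$ for every $m$ in the tuple, so the $k$th position is itself dominant in $\Phi_1$, which makes the first move $T_2^k$ of the swapped sequence valid. Either way at least one of the two swapped moves is valid, contradicting the assumption.

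\textbf{Expected obstacle.} The only delicate point is recognizing that the borderline tie $\Phi_1[i]=\Phi_1[k]\geq 0$ automatically promotes the $k$th position to a dominant one of $\Phi_1$; this is the escape route that saves the argument precisely in the case where $T_2^i$ could fail after $T_2^k$. Everything else is a short case split on whether $i$ and $k$ share a tuple and on the sign of $\Phi_1[k]$, so no deeper structural input from the previous lemmas is required.
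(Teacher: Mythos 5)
Your proposal is correct and follows essentially the same route as the paper's proof: a case split on whether the $i^{th}$ and $k^{th}$ positions share a tuple, followed by comparing $\Phi_1[i]$ with $\Phi_1[k]$, with the key observation that the borderline tie $\Phi_1[i]=\Phi_1[k]$ makes the $k^{th}$ position dominant and hence $T_2^k$ valid for $\Phi_1$. The only cosmetic differences are that you phrase it as a contradiction and split on the sign of $\Phi_1[k]$ rather than directly on $\Phi_1[i]=\Phi_1[k]$ versus $\Phi_1[i]>\Phi_1[k]$, which the paper does.
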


\begin{proof}
For convenience we will employ the notation $\Phi'_2$ for the state $T^{k}_2(\Phi_1)$. 

As $T^k_2$ is invalid in sequence~\eqref{T2iT2j}, then $i\neq k$. So we have  two possibilities:
\begin{enumerate}[(i)]
    \item The $i^{th}$ position and the $k^{th}$ position belongs to different tuples of $\Phi_1$. 
    
    Then the moves do not interfere with each other. So after the swap we have 
    $$\Phi_1 \xrightarrow{\text{\circled{$T^{k}_2$}}}  \Phi'_2 \xrightarrow{T^{i}_2}   \Phi_3.$$
    \item The $i^{th}$ position and the $k^{th}$ position belongs to the same tuple of $\Phi_1$. 

    As the $i^{th}$ position is dominant in $\Phi_1$, then we have two possibilities:
    \begin{enumerate}
        \item \label{iia} $\Phi_1[i]=\Phi_1[k]$. Then 
    $$\Phi_1 \xrightarrow{T^{k}_2} \Phi'_2 \xrightarrow{\text{\circled{$T^{i}_2$}}} \Phi_3$$
    as the $k^{th}$ position is dominant in $\Phi_1$ and the  $i^{th}$ position is not dominant in $\Phi'_2$.

            \item $\Phi_1[i]>\Phi_1[k]$. Then 
    $$\Phi_1 \xrightarrow{\text{\circled{$T^{k}_2$}}} \Phi'_2 \xrightarrow{T^{i}_2} \Phi_3$$
    as the $k^{th}$ position is not dominant in $\Phi_1$ and the  $i^{th}$ position is  dominant in $\Phi'_2$.
    \end{enumerate}

\end{enumerate}

\end{proof}

\begin{lemma} \label{T3validT2invalid}
Let $\Phi_1\in \mathbb{Z}^{n,p}$  and let 
\begin{align} \label{T2T3}
   &\Phi_1 \xrightarrow{\text{$T^{ij}_3$}} \Phi_2 \xrightarrow{\text{\circled{$T^{k}_2$}}} \Phi_3,
\end{align}
be a sequence where  $T^{ij}_3$ is valid  and $T^{k}_2$ is invalid. If $k\neq j$ and  we  swap both moves then  it is not possible to obtain a sequence where both moves become invalid.
\end{lemma}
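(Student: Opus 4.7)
The plan is to argue by contradiction, assuming that after the swap both $T_2^k$ and $T_3^{ij}$ become invalid, and derive a contradiction from the validity of $T_3^{ij}$ on $\Phi_1$ together with the hypothesis $k\neq j$. First, I would invoke Lemma~\ref{well_defined_swap} to guarantee that the swapped sequence $\Phi_1 \xrightarrow{T_2^k} T_2^k(\Phi_1) \xrightarrow{T_3^{ij}} \Phi_3$ is admissible (neither move is of type 1) and produces the same final state $\Phi_3$; the remaining question is purely about validity.

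Under the contradictory assumption, $T_2^k$ being invalid on $\Phi_1$ says that the $k^{th}$ position is not dominant in $\Phi_1$. I would then reduce the geometric situation in two quick sub-steps. If $k=i$, the $k^{th}$ position would be dominant in $\Phi_1$ (because $T_3^{ij}$ is valid), a direct contradiction; so $k\neq i$. If the $k^{th}$ position lay in a tuple different from that of $i,j$, the moves would be independent, hence $T_3^{ij}$ would still be valid on $T_2^k(\Phi_1)$, again contradicting the assumption. Thus $i,j,k$ must all lie in the same tuple with $k\neq i$ and (by hypothesis) $k\neq j$.

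The heart of the argument, and the step I expect to be the main obstacle, is then to show that $T_3^{ij}$ cannot actually become invalid on $T_2^k(\Phi_1)$. Because $k\neq j$, the $j^{th}$ entry is unchanged, so $T_2^k(\Phi_1)[j]=\Phi_1[j]\leq 0$ still holds. Hence the only way to lose validity is a failure of dominance at the $i^{th}$ position; since only the $k^{th}$ entry has changed, this forces $|\Phi_1[k]+1|>\Phi_1[i]$. A short sign analysis on $\Phi_1[k]$ closes the argument: if $\Phi_1[k]<0$ then $|\Phi_1[k]+1|\leq |\Phi_1[k]|\leq \Phi_1[i]$, a contradiction; so $\Phi_1[k]\geq 0$ and then $\Phi_1[k]+1>\Phi_1[i]$, i.e. $\Phi_1[k]\geq \Phi_1[i]$. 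Combined with dominance of the $i^{th}$ position in $\Phi_1$, which gives $\Phi_1[i]\geq |\Phi_1[k]|=\Phi_1[k]$, this forces $\Phi_1[k]=\Phi_1[i]$ — so the $k^{th}$ position is itself dominant in $\Phi_1$, contradicting the invalidity of $T_2^k$ on $\Phi_1$.

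The hypothesis $k\neq j$ is used crucially at exactly the step that preserves $T_2^k(\Phi_1)[j]\leq 0$; without it, the action of $T_2^k$ could push the $j^{th}$ entry above $0$, genuinely destroying the validity condition $\Phi[j]\leq 0$ of $T_3^{ij}$ and making the conclusion fail, which explains why the lemma must exclude that case and motivates why the present argument proceeds solely by controlling dominance at the $i^{th}$ position.
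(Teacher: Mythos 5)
Your proof is correct and rests on the same mechanism as the paper's: after reducing to the case where $i$, $j$, $k$ lie in one tuple with $k\neq i$, both arguments exploit dominance of the $i^{th}$ position in $\Phi_1$ to show that invalidity of $T_2^k$ on $\Phi_1$ forces $\Phi_1[k]<\Phi_1[i]$, so that incrementing the $k^{th}$ entry cannot destroy that dominance. The only difference is presentational: the paper runs a direct case analysis on $\Phi_1[i]=\Phi_1[k]$ versus $\Phi_1[i]>\Phi_1[k]$ and records exactly which move is valid in each case (a breakdown it cites later in the main theorem), whereas you reach the same dichotomy by contradiction; your explicit sign analysis of $|\Phi_1[k]+1|$ is, if anything, slightly more careful than the paper's terse justification.
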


\begin{proof}
For convenience we will employ the notation $\Phi'_2$ for the state $T^{k}_2(\Phi_1)$. 

Note that  $i\neq k$, otherwise the second move $T^k_2$
in sequence~\eqref{T2T3} would be valid.

As $T^{ij}_3$ is valid  for $\Phi_1$ then the $i^{th}$ and $j^{th}$ positions on $\Phi_1$ belongs to the same tuple. Now we analyze the cases  depending on  the $i^{th}$ and $k^{th}$ positions of $\Phi_1$:
\begin{enumerate}[(i)]
    \item On $\Phi_1$ the $i^{th}$ and  $k^{th}$ positions are on different tuples. Then $T_3^{ij}$ and $T_2^k$ do not interfere with each other, and after a swap we obtain the sequence
\begin{align*} 
   &\Phi_1 \xrightarrow{\text{\circled{$T^{k}_2$}}} \Phi_2' \xrightarrow{\text{$T^{ij}_3$}} \Phi_3.
\end{align*}
    
    \item On $\Phi_1$ the $i^{th}$ and  $k^{th}$ positions are on the same tuple. As $T^{ij}_3$ is valid  for $\Phi_1$  then  the $i^{th}$ position is  dominant in $\Phi_1$. Consider  two possibilities: 
\begin{enumerate}
\item \label{L8-valid-invalid-case} If $\Phi_1[i]=\Phi_1[k]$  we obtain 
\[\Phi_1 \xrightarrow{\text{$T^{k}_2$}} \Phi_2' \xrightarrow{\text{\circled{$T^{ij}_3$}}} \Phi_3
\]
with $T^{k}_2$ being valid for $\Phi_1$, and where $T^{ij}_3$ is invalid for $\Phi_2'$ since  $\Phi_2'[i]<\Phi_2'[k]$.
\item If $\Phi_1[i]>\Phi_1[k]$  we obtain 
\[\Phi_1 \xrightarrow{\text{\circled{$T^{k}_2$}}} \Phi_2' \xrightarrow{\text{$T^{ij}_3$}} \Phi_3
\]
with $T^{k}_2$ being invalid for $\Phi_1$, and where  $T^{ij}_3$  is  valid for $\Phi_2'$ since $\Phi'_2[i]\geq \Phi'_2[k]$.
\end{enumerate}
\end{enumerate}
\end{proof}

In Lemma~\ref{T3validT2invalid} the condition $k\neq j$  is necessary. For instance, in 
\begin{align*} %\label{T2T3}
   &(0,1) \xrightarrow{\text{$T^{21}_3$}} (-1,2) \xrightarrow{\text{\circled{$T^{1}_2$}}} (0,2),
\end{align*}
both moves become invalid after a swap 
\begin{align*} %\label{T2T3}
   &(0,1) \xrightarrow{\text{\circled{$T^{1}_2$}}} (1,1) \xrightarrow{\text{\circled{$T^{21}_{3}$}}} (0,2).
\end{align*}

From Lemmas~\ref{T1validT2invalid},~\ref{T2validT2invalid} and~\ref{T3validT2invalid} it follows the following result that will play an important role in the proof of the main result.

\begin{corollary} \label{MvalidT2invalid}
Let $\Phi_1\in \mathbb{Z}^{n,p}$  and let 
\begin{align*} %\label{MT3}
   &\Phi_1 \xrightarrow{\text{$M$}} \Phi_2 \xrightarrow{\text{\circled{$T^{k}_2$}}} \Phi_3
\end{align*}
be a sequence where $T^{k}_2$ is invalid for $\Phi_2$ and where $M$ is a valid move of type 1, 2 or 3  different from $T^{ik}_3$. If   we  swap both moves then  it is not possible to obtain a sequence where both moves become invalid.
\end{corollary}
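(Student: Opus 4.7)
My plan is to prove the corollary by a direct case analysis on the type of the valid move $M$, invoking the appropriate one of Lemmas~\ref{T1validT2invalid},~\ref{T2validT2invalid} and~\ref{T3validT2invalid} in each case. Since each of those lemmas already establishes, for its particular class of $M$, that after swapping we cannot end up with two invalid moves, the corollary is essentially a packaging statement, and the main task is to verify that the hypotheses of the lemmas apply.

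First I would take $M = T_1^{(d)^\frown (d+1)}$. Lemma~\ref{T1validT2invalid} says that after the swap the type 1 move remains valid for $T_2^k(\Phi_1)$; so the first move of the swapped sequence is still valid, and in particular the two moves are not both invalid. Next I would take $M = T_2^i$ a valid type 2 move; Lemma~\ref{T2validT2invalid}, read with its second superscript equal to our $k$, gives the conclusion directly.

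Finally, I would handle the case $M = T_3^{ij}$. The hypothesis that $M \neq T_3^{ik}$ says precisely that $j \neq k$, which is exactly the condition required to invoke Lemma~\ref{T3validT2invalid}; that lemma then yields the conclusion immediately.

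The reason the exclusion of $T_3^{ik}$ is built into the statement is exactly the obstruction exhibited in the small example after Lemma~\ref{T3validT2invalid}, where $(0,1) \to (-1,2) \to (0,2)$ becomes a pair of two invalid moves after the swap; without the hypothesis $j \neq k$, the corollary would fail. Beyond checking that the exclusion of $T_3^{ik}$ in the corollary matches the hypothesis $k \neq j$ of Lemma~\ref{T3validT2invalid}, I anticipate no substantive obstacle: the proof is pure bookkeeping over three disjoint cases.
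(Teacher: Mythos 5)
Your proposal is correct and matches the paper exactly: the paper states this corollary as an immediate consequence of Lemmas~\ref{T1validT2invalid}, \ref{T2validT2invalid} and~\ref{T3validT2invalid}, with the same three-way case split on the type of $M$ and the same observation that excluding $T_3^{ik}$ supplies the hypothesis $k\neq j$ needed for Lemma~\ref{T3validT2invalid}. Nothing further is required.
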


\section{A technical lemma} \label{Technical_Lemmas_2}

This very technical lemma and its corollaries are presented here, instead of including them as part of the proof of  the main theorem, to streamline that proof.

\begin{lemma} \label{decrease-positive-increase-negative}
Let $\Phi$ and $\Phi'$ be two  states of $\mathbb{Z}^{n,d}$ that coincide (including their partitions into tuples) except at the $a^{th}$  position   where $\Phi[a] > \Phi'[a] \geq 0$ and at the $b^{th}$ position where  $\Phi[b] <  \Phi'[b] \leq  0$. If $$\Phi \xrightarrow{M}  \Psi$$ 
is a valid move  of type 1,  2 or  3 such that $\Phi[a]=\Psi[a]$,  then  
$$\Phi' \xrightarrow{M} \Psi'$$
is a valid move.
\end{lemma}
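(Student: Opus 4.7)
The plan is to do a case analysis on the type of the move $M$, and in each case verify the appropriate validity condition for $\Phi'$ using the hypotheses $\Phi[a]>\Phi'[a]\geq 0$ and $\Phi[b]<\Phi'[b]\leq 0$. Note first that $\Phi$ and $\Phi'$ share the same partition into tuples, and that the hypothesis $\Phi[a]=\Psi[a]$ forces $M$ to leave the $a^{\text{th}}$ position untouched.

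The type 1 case is immediate: $T_1^{(d)^\frown(d+1)}$ depends only on the number of tuples of the state, not on the values, and is always valid whenever admissible; since $\Phi'$ has the same tuple structure as $\Phi$, the move remains valid. For type 2, write $M=T_2^i$. Because $M$ does not change the $a^{\text{th}}$ position we have $i\neq a$. I would next rule out $i=b$: since the $i^{\text{th}}$ position is dominant in $\Phi$ we have $\Phi[i]\geq 0$, whereas the hypothesis $\Phi[b]<\Phi'[b]\leq 0$ forces $\Phi[b]<0$, so $i=b$ is impossible. Hence $\Phi'[i]=\Phi[i]$, and to conclude validity for $\Phi'$ I only need to verify dominance against the two positions where $\Phi$ and $\Phi'$ differ, namely $a$ and $b$ (and only when they lie in the same tuple as $i$). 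For position $a$, $|\Phi'[a]|=\Phi'[a]<\Phi[a]=|\Phi[a]|\leq\Phi[i]=\Phi'[i]$; for position $b$, $|\Phi'[b]|=-\Phi'[b]<-\Phi[b]=|\Phi[b]|\leq\Phi[i]=\Phi'[i]$. Both inequalities hold, so $T_2^i$ is valid for $\Phi'$.

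The type 3 case is analogous. Write $M=T_3^{ij}$; since $M$ fixes position $a$ we have $a\notin\{i,j\}$, and the same dominance argument as above rules out $i=b$ (but leaves open $j=b$). The validity conditions to check for $\Phi'$ are: (a) positions $i$ and $j$ lie in the same tuple (inherited from $\Phi$, since the partition is the same); (b) $\Phi'[j]\leq 0$, which holds because either $j\neq b$ and then $\Phi'[j]=\Phi[j]\leq 0$, or $j=b$ and then $\Phi'[j]=\Phi'[b]\leq 0$ by hypothesis; and (c) the $i^{\text{th}}$ position is dominant in $\Phi'$, which is verified exactly as in the type 2 case by comparing $\Phi'[i]=\Phi[i]$ against $|\Phi'[a]|$ and $|\Phi'[b]|$ using the strict inequalities $\Phi'[a]<\Phi[a]$ and $|\Phi'[b]|<|\Phi[b]|$. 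The main subtlety throughout is the exclusion of $i=b$ via the dominance hypothesis; once this is established the rest is a direct absolute-value comparison, and no ordering considerations among the other entries of the tuple are needed because those entries are identical in $\Phi$ and $\Phi'$.
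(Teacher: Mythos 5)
Your proof is correct and follows essentially the same route as the paper's: a case analysis on the move type, ruling out the index coincidences $i=a$, $i=b$ (and handling $j=a$, $j=b$), and then verifying dominance in $\Phi'$ via the observation that $|\Phi'[k]|\leq|\Phi[k]|$ at every position. The only cosmetic difference is that you spell out the absolute-value comparisons at positions $a$ and $b$ explicitly, where the paper states the uniform bound $0\leq|\Phi'[k]|\leq|\Phi[k]|$ for all $k$.
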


\begin{proof} For each type of move we will prove that $M$ is valid for $\Phi'$.

  \begin{enumerate}[(a)]
\item Suppose that $M$ is a type 1 move. 

As $\Phi$ and $\Phi'$ have equal number of tuples and $M$ is valid for $\Phi$, then $M$ is  valid for $\Phi'$.

\item Suppose that $M=T_2^i$. 

 As $\Psi[a]=\Phi[a]$ then $i\neq a$, and as $\Phi[b] < 0$ then $i\neq b$.

%Note that, by hypothesis, $i\neq a$ and $i\neq b$. 

As $T_2^i$ is valid for $\Phi$ then  the $i^{th}$ position is dominant  in $\Phi$. As   $\Phi'[i]=\Phi[i]$ and for each $k=1,\ldots,n$ we have that $0 \leq |\Phi'[k]|\leq  |\Phi[k]|$,  then  the $i^{th}$ position is also dominant  in $\Phi'$. So   $T_2^i$ is valid for $\Phi'$.

\item Suppose that $M=T_3^{ij}$. 

As $\Psi[a]=\Phi[a]$ then $i\neq a$, and as $\Phi[a] > 0$ then $j\neq a$.

If $T_3^{ij}$ is  valid for $\Phi$, then the $i^{th}$ and $j^{th}$ positions are in the same tuple, and the $i^{th}$ position is dominant  in $\Phi$. That $T_3^{ij}$ is  valid for $\Phi'$ follows from the  claims:
\begin{enumerate}[(i)]

    \item The $i^{th}$ and $j^{th}$ positions of $\Phi$, and therefore of $\Phi'$, are in the same tuple. 
    \item The $i^{th}$ position is dominant in $\Phi'$ (arguing as in (b)).
    \item $\Phi'[j]\leq 0$:  if $j=b$ then $\Phi'[b]\leq 0$, and  if $j\neq b$ then $\Phi'[j]=\Phi[j]\leq 0$.
\end{enumerate}

\end{enumerate}  
\end{proof}

We will write two versions of the previous Lemma where instead of two non-coincident positions in $\Phi$ and $\Phi'$ there is only one non-coincident position. The proofs are the same except that in the proof of Corollary~\ref{Lemma-Phi2PhiPrimaT4} we would omit the reference to the $b^{th}$ position, and in the proof of Corollary~\ref{Lemma-Phi2PhiPrima} we would omit the reference to the $a^{th}$ position.

\begin{corollary} \label{Lemma-Phi2PhiPrimaT4}
Let $\Phi$ and $\Phi'$ two states that  coincide (including their partitions into tuples)  except for the $a^{th}$  position where $\Phi[a]>\Phi'[a]\geq 0$. If $\Phi \xrightarrow{M}  \Psi$ is a valid move  of type 1,  2 or  3 such that $\Psi[a]=\Phi[a]$,   then  $\Phi' \xrightarrow{M} \Psi'$  is  valid.
\end{corollary}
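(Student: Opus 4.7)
The plan is to mirror the proof of Lemma~\ref{decrease-positive-increase-negative} verbatim, just dropping every reference to the $b^{\text{th}}$ position, since now only the $a^{\text{th}}$ position differs between $\Phi$ and $\Phi'$, and it differs in the same direction (a nonnegative value strictly decreased). I would do a case split on the type of $M$.

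First, if $M$ is a type 1 move, the argument is immediate: $\Phi$ and $\Phi'$ have the same partition into tuples, so $M=T_1^{(d)^\frown(d+1)}$ is admissible and hence valid for $\Phi'$ whenever it is valid for $\Phi$. If $M = T_2^i$, then the hypothesis $\Psi[a]=\Phi[a]$ forces $i \neq a$, whence $\Phi'[i] = \Phi[i]$. Since changing position $a$ from $\Phi[a]$ to the smaller nonnegative value $\Phi'[a]$ only shrinks the absolute value at that coordinate, we have $|\Phi'[k]| \leq |\Phi[k]|$ for every $k$, so the dominance of the $i^{\text{th}}$ position in $\Phi$ transfers to $\Phi'$, and $T_2^i$ is valid for $\Phi'$.

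If $M = T_3^{ij}$, the hypothesis $\Psi[a]=\Phi[a]$ forces both $i\neq a$ and $j\neq a$, so $\Phi'[i]=\Phi[i]$ and $\Phi'[j]=\Phi[j]\leq 0$. The $i^{\text{th}}$ and $j^{\text{th}}$ positions lie in the same tuple of $\Phi'$ because the tuple partition agrees with that of $\Phi$; the $i^{\text{th}}$ position is dominant in $\Phi'$ by the same $|\Phi'[k]|\leq|\Phi[k]|$ argument as in the type 2 case; and the sign condition $\Phi'[j]\leq 0$ is inherited directly from $\Phi[j]\leq 0$. All three validity conditions of a type 3 move hold, so $T_3^{ij}$ is valid for $\Phi'$.

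There is no real obstacle here: the whole point is that removing one of the two perturbed positions only removes hypotheses we never needed in the relevant cases. The only thing worth flagging is that in each of cases (b) and (c) one must justify why $a\notin\{i,j\}$, which is exactly where the assumption $\Psi[a]=\Phi[a]$ (equivalently, that $M$ does not touch the $a^{\text{th}}$ position) is used.
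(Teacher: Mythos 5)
Your proposal is correct and matches the paper's intended argument exactly: the paper explicitly states that the proof of Corollary~\ref{Lemma-Phi2PhiPrimaT4} is that of Lemma~\ref{decrease-positive-increase-negative} with all references to the $b^{th}$ position omitted, which is precisely what you carry out. Your justification that $a\notin\{i,j\}$ via $\Psi[a]=\Phi[a]$ is a harmless (and if anything cleaner) variant of the paper's use of $\Phi[a]>0$ to rule out $j=a$.
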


\begin{corollary} \label{Lemma-Phi2PhiPrima}
Let $\Phi$ and $\Phi'$ two states that coincide (including their partitions into tuples)  except for  the $b^{th}$  position where $\Phi[b]< \Phi'[b]\leq 0$. If $\Phi \xrightarrow{M}  \Psi$ is a valid move  of Type 1,  2 or  3,  then  $\Phi' \xrightarrow{M} \Psi'$ is  valid.
\end{corollary}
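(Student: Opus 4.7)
The plan is to mirror, case by case, the proof of Lemma~\ref{decrease-positive-increase-negative}, but this time tracking only the single discrepant position $b$ (there is no $a^{th}$ position to worry about, since $\Phi$ and $\Phi'$ now agree everywhere except at $b$). The key observations driving every case are that (i) $\Phi$ and $\Phi'$ have identical partitions into tuples, (ii) $\Phi'[b]\leq 0$, and (iii) $|\Phi'[b]|<|\Phi[b]|$, which together say that passing from $\Phi$ to $\Phi'$ cannot enlarge any absolute value, cannot turn a non-positive entry positive, and cannot modify tuple membership. These are exactly the features needed to preserve the three conditions that enter the definition of validity (same-tuple requirement, dominance of the $i^{th}$ position, and non-positivity of the $j^{th}$ position for a type 3 move).

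For $M$ of type 1, validity depends only on the tuple partition, which is common to $\Phi$ and $\Phi'$, so $M$ is automatically valid for $\Phi'$. For $M=T_2^i$ or $M=T_3^{ij}$, the first step is to observe that $i\neq b$: the $i^{th}$ position is dominant in $\Phi$, hence $\Phi[i]\geq 0$; if $i=b$ we would get $\Phi'[b]=\Phi'[i]>\Phi[i]\geq 0$, contradicting the hypothesis $\Phi'[b]\leq 0$. Consequently $\Phi'[i]=\Phi[i]$, and dominance of the $i^{th}$ position transfers from $\Phi$ to $\Phi'$ because for every position $k$ in the same tuple either $\Phi'[k]=\Phi[k]$ (if $k\neq b$) or $|\Phi'[b]|<|\Phi[b]|\leq \Phi[i]=\Phi'[i]$ (if $k=b$). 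For type 3 there is one extra condition to verify, namely $\Phi'[j]\leq 0$: this holds when $j\neq b$ since $\Phi'[j]=\Phi[j]\leq 0$, and holds when $j=b$ directly by hypothesis.

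I do not anticipate any genuine obstacle. The only subtle point is confirming $i\neq b$, which is where the sign constraint $\Phi'[b]\leq 0$ is actually used; the rest is routine bookkeeping and can be written in a few lines. As noted in the paper, the argument is essentially that of Lemma~\ref{decrease-positive-increase-negative} with the clauses involving the $a^{th}$ position simply deleted.
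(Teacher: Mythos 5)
Your proposal is correct and follows essentially the same route as the paper, which proves this corollary by repeating the proof of Lemma~\ref{decrease-positive-increase-negative} with all references to the $a^{th}$ position deleted; your case analysis (type 1 via the shared tuple partition, $i\neq b$ from dominance versus $\Phi[b]<\Phi'[b]\leq 0$, dominance transfer from $|\Phi'[b]|<|\Phi[b]|$, and the two subcases $j=b$, $j\neq b$ for the sign condition) matches the paper's argument point for point.
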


%%%%%%%%%%%%%%%%%%%%%%%%%%%%%%%%%%%%%%

\section{Main Theorem}{\label{MainTheorem}}

\begin{theorem}{\label{avoidingtype456}}
A multiset of integer numbers is $C_{\mathbb{Z},6}-$realizable if and only if it is $C_{\mathbb{Z},3}-$realizable. 
\end{theorem}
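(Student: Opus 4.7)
The implication from $C_{\mathbb{Z},3}$- to $C_{\mathbb{Z},6}$-realizability is immediate from the definitions, since the three rules of Definition~\ref{CZ_realizability_2} form a subset of the six rules. The substantive content is the converse, for which I would pass to the state framework via Lemmas~\ref{Crealiz_C6realiz} and~\ref{Crealiz_C3realiz}: it suffices to show that whenever some ordering $(\lambda_{\sigma(1)},\ldots,\lambda_{\sigma(n)})$ of the multiset is $C^*_{\mathbb{Z},6}$-realizable, there is a (possibly different) permutation $\tau$ for which $(\lambda_{\tau(1)},\ldots,\lambda_{\tau(n)})$ is $C^*_{\mathbb{Z},3}$-realizable.

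The natural plan is to fix a $C^*_{\mathbb{Z},6}$-realizing sequence $\mathcal{S}$ and induct on the number of moves of types 4, 5, and 6 (the ``bad'' moves) appearing in $\mathcal{S}$. The base case is immediate. For the inductive step I would select a distinguished bad move $M$ in $\mathcal{S}$ and construct a new sequence $\mathcal{S}'$, realizing the same multiset under some permutation $\tau$, with strictly fewer bad moves. Two toolkits drive the construction. First, the swap lemmas of Section~\ref{Technical_Lemmas} --- notably Corollary~\ref{MvalidT2invalid} --- let me reposition $M$ past neighbouring valid type 1--3 moves without losing admissibility, so that the position of $M$ inside $\mathcal{S}$ can be chosen conveniently (for instance, as close as possible to the end, or just after a prescribed block of type 2 moves in the spirit of Corollary~\ref{T2_begining}). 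Second, the technical Lemma~\ref{decrease-positive-increase-negative} together with Corollaries~\ref{Lemma-Phi2PhiPrimaT4} and~\ref{Lemma-Phi2PhiPrima} compare intermediate states of two different sequences: if an intermediate state is replaced by a ``less extreme'' one (a positive position becoming smaller or a negative position becoming closer to $0$), then every remaining valid move of type 1--3 continues to be valid.

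The elimination of $M$ would be handled case by case on its type. For a type 6 move $T^{ij}_6$ (both positions $+1$ with $\Phi[j]\ge 0$), the increment at $j$ can be produced earlier by a type 2 move while $j$ is still a singleton, and the increment at the Perron position $i$ is absorbed by an adjacent type 2 move already in $\mathcal{S}$ or by a symmetric prepayment. For a type 5 move $T^{ij}_5$ (Perron $+1$ and a negative $j$ moved towards $0$), the negativity of $j$ must trace back to an earlier valid type 3 move of the form $T^{*j}_3$; one cancels that earlier type 3 against $M$ (preserving the net action on $j$) and pays the two lost $+1$'s on the Perron positions by type 2 moves inserted at the start. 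For a type 4 move $T^{ij}_4$ (Perron $+1$ and a positive $j$ decremented), the positivity of $j$ originates either from a prior $T^j_2$ on a singleton or from a type 5 or 6 move that has already been eliminated by the induction hypothesis on a previous pass; removing that earlier increment of $j$ balances accounts with $M$. In every case the changes propagate through the rest of $\mathcal{S}$ and the technical lemma is invoked repeatedly to certify that the intervening type 1--3 moves remain valid on the perturbed intermediate states.

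The main obstacle is the validity bookkeeping under propagating modifications. Type 2 moves are the fragile ingredient because of the strict dominance requirement, and even the swap lemmas have genuine corner cases (such as the counterexample after Lemma~\ref{T3validT2invalid}) in which both moves become invalid after a swap. A related subtlety is that the permutation $\tau$ realized by $\mathcal{S}'$ generally differs from the $\sigma$ realized by $\mathcal{S}$, because rerouting joins and type 2 increments can shuffle where the multiset entries land in the final single tuple; tracking the induced permutation is part of the argument. Finally, the hypothesis $\mathbb{S}=\mathbb{Z}$ is essential here: the decomposition of rules \eqref{move-ii}--\eqref{move-vi} into unit steps in Theorem~\ref{guo-etc-2} is what allows replacing a single bad move by a handful of type 2 prepayments, and the authors point out in the introduction that precisely this decomposition is what fails over $\mathbb{Q}$ and $\mathbb{R}$.
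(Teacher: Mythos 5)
Your outline agrees with the paper's strategy in its broad strokes: reduce to states via Lemmas~\ref{Crealiz_C6realiz} and~\ref{Crealiz_C3realiz}, induct on the number of type 4--6 moves, eliminate the first bad move by tracing the sign of position $j$ back to its origin and rerouting unit increments, and use Lemma~\ref{decrease-positive-increase-negative} and its corollaries to certify that the intervening type 1--3 moves stay valid on the perturbed states. But the case analysis has two genuine gaps. For a type 4 move $T_4^{ij}$ you claim the positivity of $j$ comes from a prior $T_2^j$ ``on a singleton'' or from an already-eliminated type 5/6 move; in the inductive setting all earlier moves are of types 1--3, so the last increment of position $j$ is either a $T_2^j$ or a $T_3^{jg}$ (position $j$ acting as the dominant index of a type 3 move). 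You miss the second possibility, and it cannot be folded into the first: cancelling $T_3^{jg}$ against $T_4^{ij}$ leaves an unpaid $-1$ at position $g$, which the paper recovers by replacing $T_4^{ij}$ with $T_3^{ig}$ rather than $T_2^i$; this in turn forces one to check that $i$ and $g$ lie in the same tuple and that $g$ is still non-positive at the end, neither of which your sketch contemplates.

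The larger gap is the type 6 case with $\Phi_k[i]>\Phi_k[j]\ge 0$, which is where most of the paper's work lies. Your ``prepay the increment at $j$ earlier by a type 2 move'' only works when $\Phi_k[i]=\Phi_k[j]$, i.e., when both positions are dominant and their values have been frozen since the type 1 move that merged their tuples. When $\Phi_k[i]>\Phi_k[j]$, position $j$ is not dominant at the end and there is no earlier moment at which a valid $T_2^j$ obviously exists; the paper must split $T_6^{ij}$ into an invalid $T_2^j$ followed by a valid $T_2^i$, push the invalid $T_2^j$ backwards by Corollary~\ref{MvalidT2invalid} until it either turns valid or collides with a $T_2^g$ or a $T_3^{gh}$ that turns invalid, and in the collision cases merge the two moves into a valid $T_6^{jg}$ with $\Phi_p[j]=\Phi_p[g]$ (rerouting $T_3^{gh}\to T_2^g$ and $T_2^i\to T_3^{ih}$ in the second case) before recursing into the equal-values subcase on a shorter sequence. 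Nothing in your proposal anticipates this collision analysis or the recursion, and the counterexample you yourself cite after Lemma~\ref{T3validT2invalid} is precisely the phenomenon that makes it unavoidable. (Two smaller points: paying the lost $+1$'s of a type 5 move ``at the start'' perturbs the intermediate states at the dominant positions $g$ and $i$ as well as at $j$, which is outside the scope of Corollary~\ref{Lemma-Phi2PhiPrima}; the paper instead substitutes $T_2^g$ and $T_2^i$ in place, so only position $j$ changes. And the paper's rewrites all terminate at the identical state $\Lambda$, so the permutation never changes --- the bookkeeping of $\tau$ versus $\sigma$ you worry about does not arise.)
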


\begin{proof} 
By Lemma~\ref{Crealiz_C6realiz} and Corollary~\ref{Crealiz_C3realiz}, we only need to prove the statement of the theorem for states, namely, a tuple of integer numbers is $C^*_{\mathbb{Z},6}-$realizable if and only if it is $C^*_{\mathbb{Z},3}-$realizable.

Clearly, if a tuple of integer numbers is $C^*_{\mathbb{Z},3}-$realizable, then it is $C^*_{\mathbb{Z},6}-$realizable. 

The rest of the proof will be devoted to prove that whenever a tuple of integer numbers is $C^*_{\mathbb{Z},6}-$realizable, then it is also $C^*_{\mathbb{Z},3}-$realizable. And it is enough to prove this with any sequence 
\begin{align} \label{original}
    &\Phi_1 \xrightarrow{M_1} \Phi_2 \xrightarrow{M_2}  \cdots  \xrightarrow{M_{k-1}}\Phi_{k}  \xrightarrow{M_{k}} \Lambda
\end{align}
where $\Phi_1=\big((0),\ldots,(0)\big)$;   $M_1, \ldots, M_{k-1}$  are valid moves of type 1 to 3; and  $M_k$ is a valid move of type 4 to 6. We divide the proof according to the type of move that $M_k$ is:

\begin{description} %[Type 1.]

 \item \framebox{$\mathbf{Type \ 4: \ M_k=T^{ij}_4}$}
  The sequence~(\ref{original})  becomes  
\begin{align} \label{original_T4ij}
    &\Phi_1 \xrightarrow{M_1} \Phi_2\xrightarrow{M_2} \cdots  \xrightarrow{M_{k-1}}\Phi_{k}  \xrightarrow{T^{ij}_4} \Lambda.
\end{align}

      Therefore the $i^{th}$ position is dominant in $\Phi_k$ with  $ \Phi_k[i]\geq \Phi_k[j]> 0$. And as $M_1,\ldots,M_k$ are of type 1 to 3 then, by the increasing-decreasing law, the values on the  $j^{th}$ position of states along the sequence $\Phi_1 \rightarrow  \cdots \rightarrow \Phi_{k}$  are non-decreasing. 
      
      Let $h$, with $1\leq h< k$, be the  unique integer such that 
      \begin{align}\label{Phi_kj=Phi_hj}
       \Phi_{h}[j]+1=\Phi_{h+1}[j]=\cdots=\Phi_{k}[j]>0.    
      \end{align}

     We analyze the only two  possibilities for $M_h$:

\begin{description}
    
\item \framebox{%$\mathbf{M_k=T^{ij}_4}$ and 
   $\mathbf{4A: \ M_h=T_2^j}$}
   The  sequence~(\ref{original_T4ij})  becomes 
    \begin{equation} \label{seq_4A}
    \Phi_1 \xrightarrow{M_1} \cdots \xrightarrow{M_{h-1}} \Phi_{h} \xrightarrow{T^j_2} \Phi_{h+1} \xrightarrow{M_{h+1}} \Phi_{h+2} \rightarrow\cdots\rightarrow  \Phi_{k-1} \xrightarrow{M_{k-1}} \Phi_{k} \xrightarrow{T^{ij}_4} \Lambda.  
    \end{equation}

    Deleting $T_2^j$  and replacing  $T^{ij}_4$ by $T^{i}_2$ we obtain the sequence of moves
    \begin{equation} \label{seq_4A'}
        \Phi_1 \xrightarrow{M_1} \cdots \xrightarrow{M_{h-1}} \Phi_{h} =\Phi'_{h+1} \xrightarrow{\text{\circledD{$M_{h+1}$}}} \Phi'_{h+2} \rightarrow\cdots\rightarrow  \Phi'_{k-1}\xrightarrow{\text{\circledD{$M_{k-1}$}}} \Phi'_{k} \xrightarrow{\text{\circledD{$T_2^i$}}} \Lambda
    \end{equation}
    that only contains moves of types 1 to 3.  By convenience with the notation, in \eqref{seq_4A'} we have also named the state $\Phi_h$ by $\Phi'_{h+1}$. The sequences~(\ref{seq_4A}) and~(\ref{seq_4A'}) end in the same state $\Lambda$ since the only position that is affected by the changes is the $j^{th}$ position. In (\ref{seq_4A}) the $j^{th}$ position is increased by one with $T_2^j$ and then decreased by one with $T_4^{ij}$. We cancel out this increase-decrease in~(\ref{seq_4A'}) by deleting $T_2^j$  and replacing  $T^{ij}_4$ by $T^{i}_2$.
    
    That $\Lambda$ is $C^*_{\mathbb{Z},3}-$realizable follows if all the moves of~(\ref{seq_4A'}) are valid:

    \begin{enumerate}

    \item \emph{$M_{1},\ldots,M_{h-1}$ are valid  in~(\ref{seq_4A'}). }

    These moves are equal in~(\ref{seq_4A}) and in~(\ref{seq_4A'}).

\item \emph{$M_{h+1},\ldots,M_{k-1}$ are valid  in~(\ref{seq_4A'}). }

The elimination of $T_2^j$ in sequence \eqref{seq_4A} makes that in sequence \eqref{seq_4A'}  the states $\Phi'_{h+1},\ldots \Phi'_k$  are obtained, respectively, from $\Phi_{h+1},\ldots \Phi_k$ just by subtracting  one on the $j^{th}$ position. So for $t=h+1,\ldots,k-1$ the states  $\Phi_{t}$ and $\Phi'_{t}$ are equal  except that  $\Phi'_{t}[j]=\Phi_{t}[j]-1$. From  \eqref{Phi_kj=Phi_hj} it follows that $\Phi_{t}[j]>\Phi'_{t}[j]\geq 0$. As $\Phi_{t} \xrightarrow{M_{t}} \Phi_{t+1}$  is a valid move of type 1 to 3 with  $\Phi_{t+1}[j]=\Phi_{t}[j]$ then,  by Corollary~\ref{Lemma-Phi2PhiPrimaT4}, $\Phi'_{t} \xrightarrow{M_{t}} \Phi'_{t+1}$ is also valid.
 
\item \emph{$T_2^i$ is  valid  in~(\ref{seq_4A'}). }

    As   $\Phi_k \xrightarrow{T_4^{ij}}  \Lambda$ is  valid  \eqref{seq_4A}, then the $i^{th}$ position is dominant  in $\Phi_k$. Note that the $i^{th}$ position is also dominant in $\Phi'_k$: the $j^{th}$ position (the only difference between $\Phi_k$ and $\Phi'_k$) is not a problem since $\Phi_{k}[j]>\Phi'_{k}[j]\geq 0$. So $\Phi'_k \xrightarrow{T_2^i}  \Lambda$ is  valid. 

    \end{enumerate}

   \item \framebox{$\mathbf{4B : \ M_h=T^{jg}_3}$}
      The  sequence~(\ref{original})  becomes 
    \begin{equation} \label{seq_4B}
    \Phi_1 \xrightarrow{M_1} \cdots \xrightarrow{M_{h-1}} \Phi_{h} \xrightarrow{T^{jg}_3} \Phi_{h+1} \xrightarrow{M_{h+1}} \Phi_{h+2} \rightarrow \cdots \rightarrow \Phi_{k-1} \xrightarrow{M_{k-1}} \Phi_{k} \xrightarrow{T^{ij}_4} \Lambda.
    \end{equation}
Let us first see that $g\neq i$. Note that  $\Phi_{h+1}[g]<0$ and that, by the increasing-decreasing law,  $\Phi_k[g]<0$. Then $g=i$ would imply that $T_4^{ij}$  is invalid for $\Phi_k$. A contradiction.

Deleting  $T_3^{jg}$  and replacing  $T^{ij}_4$ by $T^{ig}_3$ we obtain
    \begin{equation} \label{seq_4B'}
        \Phi_1 \xrightarrow{M_1} \cdots \xrightarrow{M_{h-1}} \Phi_{h} =\Phi'_{h+1}\xrightarrow{\text{\circledD{$M_{h+1}$}}}\Phi'_{h+2} \rightarrow \cdots  \rightarrow\Phi'_{k-1}\xrightarrow{\text{\circledD{$M_{k-1}$}}} \Phi'_{k} \xrightarrow{\text{\circledD{$T^{ig}_3$}}} \Lambda
    \end{equation}
    that only contains moves of types 1 to 3. By convenience with the notation, in \eqref{seq_4B'} we have also named the state $\Phi_h$ by $\Phi'_{h+1}$. The sequences~(\ref{seq_4B}) and~(\ref{seq_4B'}) end in the same state $\Lambda$ since the only positions that are affected by the changes are the $j^{th}$ and $g^{th}$ positions. In (\ref{seq_4B}) the $j^{th}$ position is increased by one with $M_h=T_3^{jg}$ and then decreased by one with $M_k=T_4^{ij}$, and  we cancel out this increase-decrease in~(\ref{seq_4B'}) by deleting $T_3^{jg}$  and replacing  $T^{ij}_4$ by $T^{ig}_3$. In (\ref{seq_4B}) the $g^{th}$ position is decreased by one with $M_h=T_3^{jg}$, and although we delete $T_3^{jg}$ in (\ref{seq_4B'}) we obtain this decrease  at the end of the sequence with $T_3^{ig}$.

Note that $\Phi'_{h+1}[j]=\Phi_{h}[j]=\Phi_{h+1}[j]-1 \geq 0$, then from  \eqref{Phi_kj=Phi_hj} it follows that 
     \begin{align}\label{j-increase-prime-1}
     &\ \Phi_{h+1}[j]= \Phi_{h+2}[j]=\cdots= \Phi_{k-1}[j] =\Phi_{k}[j]\nonumber\\
      & \quad \ \ \vee \\
     &\ \Phi'_{h+1}[j]= \Phi'_{h+2}[j] = \cdots=\Phi'_{k-1}[j] =\Phi'_{k}[j] \geq 0.\nonumber
     \end{align}
Also note that  $\Phi'_{h+1}[g]=\Phi_h[g]=\Phi_{h+1}[g]+1 \leq 0$ and  that  $\Phi'_{t}[g]=\Phi_{t}[g]+1$ for $t=h+2,\ldots,k$. Therefore
     \begin{align} \label{k-decrease-prime-2}
     & 0 >  \ \Phi_{h+1}[g] \geq \Phi_{h+2}[g]\geq\cdots\geq  \Phi_{k-1}[g] \geq \Phi_{k}[g] \nonumber \\
      & \qquad \quad   \wedge \qquad \qquad  \wedge \qquad \;  \cdots \qquad \; \wedge \qquad \quad \wedge \\
     &0 \geq \ \Phi'_{h+1}[g]\geq  \Phi'_{h+2}[g] \geq  \cdots\geq \Phi'_{k-1}[g] \geq \Phi'_{k}[g]. \nonumber
     \end{align}

%It remains to prove that the moves of sequence~(\ref{seq_4B'}) are valid: 
    That $\Lambda$ is $C^*_{\mathbb{Z},3}-$realizable follows if all the moves of~(\ref{seq_4B'}) are valid:

    \begin{enumerate}

    \item \emph{$M_{1},\ldots,M_{h-1}$  are valid in~(\ref{seq_4B'}). }

    These moves are equal in~(\ref{seq_4B}) and in~(\ref{seq_4B'}).

 \item \emph{$M_{h+1},\ldots,M_{k-1}$ are valid  in~(\ref{seq_4B'}). }

For $t=h+1,\ldots,k-1$ the states  $\Phi_{t}$ and $\Phi'_{t}$ are equal  except that   $\Phi_{t}[j]>\Phi'_{t}[j]\geq 0$ (see  \eqref{j-increase-prime-1}) and $\Phi_{t}[g] < \Phi'_{t}[g]\leq 0$ (see   \eqref{k-decrease-prime-2}). As $\Phi_{t} \xrightarrow{M_{t}} \Phi_{t+1}$  is  a valid move of type 1 to 3 with  $\Phi_{t+1}[j]=\Phi_{t}[j]$ then,  by Lemma~\ref{decrease-positive-increase-negative}, $\Phi'_{t} \xrightarrow{M_{t}} \Phi'_{t+1}$ is also valid.

    \item \emph{$T_3^{ig}$ is valid  in~(\ref{seq_4B'}). }

    First note that the states  $\Phi_{k}$ and $\Phi'_{k}$ are equal  except that   $\Phi_{k}[j]>\Phi'_{k}[j]\geq 0$ (see \eqref{j-increase-prime-1}) and $\Phi_{k}[g] < \Phi'_{k}[g]\leq 0$ (see \eqref{k-decrease-prime-2}. Now we verify the three conditions for $T_3^{ig}$ to be valid:
\begin{enumerate}[(a)]
    \item The $i^{th}$ position is dominant in $\Phi'_k$.
    
    As   $\Phi_k \xrightarrow{T_4^{ij}}  \Lambda$ is valid  in \eqref{seq_4B}, then the $i^{th}$ position is dominant  in $\Phi_k$. Note that the $i^{th}$ position, which verifies $j\neq i\neq g$, is also dominant in $\Phi'_k$: 
the $j^{th}$ and $g^{th}$ positions (the only difference between $\Phi_k$ and $\Phi'_k$) are not a problem since $|\Phi_k[j]|>|\Phi'_k[j]| \geq  0$ and   $|\Phi_k[g]|>|\Phi'_k[g]|\geq  0$.
  
    \item The $i^{th}$ and $g^{th}$ positions of state $\Phi'_k$ are in the same tuple:  
    \begin{itemize}
    \item The $i^{th}$ and $j^{th}$ positions   are in the same tuple of $\Phi_k$ since  $T_4^{ij}$ is  valid in sequence \eqref{seq_4B}. 
    
    \item The $j^{th}$ and $g^{th}$ positions  are in the same tuple of  $\Phi_h$ since  $T_3^{jg}$ is valid in sequence \eqref{seq_4B}. And, by the permanence law, they are also  in the same tuple of $\Phi_k$.
    
    \item By transitivity the $i^{th}$ and $g^{th}$ positions are in the same tuple of $\Phi_k$.

\item The only moves that alter the structure of the tuples in a state are type 1 moves. So,  $\Phi'_{k}$ and $\Phi_{k}$ have the same structure of tuples since the only difference between both states comes from deleting the move  $T_3^{jg}$ in sequence \eqref{seq_4B}. So the $i^{th}$ and $g^{th}$ positions are in the same tuple of $\Phi'_k$.

    \end{itemize}

    \item $\Phi'_k[g]\leq 0$, as can be seen in \eqref{k-decrease-prime-2}.
\end{enumerate}    

\end{enumerate}

\end{description}

    \item \framebox{$\mathbf{Type \ 5: \ M_k=T_5^{ij}}$}
        The  sequence~(\ref{original})  becomes 
\begin{align} \label{original_T5ij}
    &\Phi_1 \xrightarrow{M_1} \Phi_2 \xrightarrow{M_2}  \cdots  \xrightarrow{M_{k-1}}\Phi_{k}  \xrightarrow{T^{ij}_5} \Lambda.
\end{align}
    Therefore the $i^{th}$ position is dominant in $\Phi_k$, with $\Phi_k[j]< 0$ and $\Phi_k[i]\geq |\Phi_k[j]|> 0$.

As $M_1,\ldots,M_{k-1}$ are of type 1 to 3 then, by the increasing-decreasing law, the values on the  $j^{th}$ position of states along the sequence $\Phi_1 \rightarrow  \cdots \rightarrow \Phi_{k}$ are non-increasing. Let $h$, with $1\leq h< k$, be the  unique integer such that 
      \begin{align*}\label{}
       \Phi_{h}[j]-1=\Phi_{h+1}[j]=\cdots=\Phi_{k}[j]<0.    
      \end{align*}
 As $M_h$ is of type 1 to 3, then the only possibility is that $M_h=T_3^{gj}$ for some $g$. The  sequence~(\ref{original_T5ij})  becomes 
    \begin{equation} \label{seq_5A}
    \Phi_1 \xrightarrow{M_1} \cdots \xrightarrow{M_{h-1}} \Phi_{h} \xrightarrow{T^{gj}_3} \Phi_{h+1} \xrightarrow{M_{h+1}} \Phi_{h+2} \rightarrow\cdots\rightarrow  \Phi_{k-1} \xrightarrow{M_{k-1}} \Phi_{k} \xrightarrow{T^{ij}_5} \Lambda.  
    \end{equation}
Replacing $T^{gj}_3$ by $T_2^g$  and   $T^{ij}_5$ by $T^{i}_2$ we obtain the  sequence  
    \begin{equation} \label{seq_5A'}
    \Phi_1 \xrightarrow{M_1} \cdots \xrightarrow{M_{h-1}} \Phi_{h} \xrightarrow{\text{\circledD{$T_2^g$}}} \Phi'_{h+1} \xrightarrow{\text{\circledD{$M_{h+1}$}}} \Phi'_{h+2} \rightarrow\cdots\rightarrow  \Phi'_{k-1} \xrightarrow{\text{\circledD{$M_{k-1}$}}} \Phi'_{k} \xrightarrow{\text{\circledD{$T_2^i$}}} \Lambda
    \end{equation}
that only contains moves of types 1 to 3.     The sequences~(\ref{seq_5A}) and~(\ref{seq_5A'}) end in the same state $\Lambda$ since the only position that is affected by the changes is the $j^{th}$ position. In (\ref{seq_5A}) the $j^{th}$ position is decreased by one with $T_3^{gj}$ and then increased by one with $T_5^{ij}$. We cancel out this decrease-increase in~(\ref{seq_5A'}) by replacing $T^{gj}_3$ by $T_2^g$  and   $T^{ij}_5$ by $T^{i}_2$.   

Note that      
     \begin{align}\label{j-increaseT5-prime}
     \Phi_{h}[j]-1 = &\ \Phi_{h+1}[j]= \Phi_{h+2}[j]=\cdots= \Phi_{k-1}[j] =\Phi_{k}[j] \nonumber \\
    &\quad \wedge  \\
     \Phi_h[j] =&\ \Phi'_{h+1}[j]= \Phi'_{h+2}[j]=\cdots= \Phi'_{k-1}[j] =\Phi'_{k}[j] \leq 0. \nonumber
     \end{align}

That $\Lambda$ is $C^*_{\mathbb{Z},3}-$realizable follows if all the moves of~(\ref{seq_5A'}) are valid:

\begin{enumerate}

\item \emph{$M_{1},\ldots,M_{h-1}$  are valid  in~(\ref{seq_5A'}). }

These moves are equal in~(\ref{seq_5A}) and in~(\ref{seq_5A'}).

\item \emph{$T_2^g$ is valid in~(\ref{seq_5A'}).}

This is so because $T_3^{gj}$ is valid in~(\ref{seq_5A}), and so the $g^{th}$ position is  dominant in $\Phi_h$.

 \item \emph{$M_{h+1},\ldots,M_{k-1}$ are valid  in~(\ref{seq_5A'}). }

For $t=h+1,\ldots,k-1$ the states  $\Phi_{t}$ and $\Phi'_{t}$ are equal  except that   $\Phi_{t}[j]<\Phi'_{t}[j]\leq 0$  (see \eqref{j-increaseT5-prime}). As $\Phi_{t} \xrightarrow{M_{t}} \Phi_{t+1}$  is  a valid move of type 1 to 3  then,  by Corollary~\ref{Lemma-Phi2PhiPrima}, $\Phi'_{t} \xrightarrow{M_{t}} \Phi'_{t+1}$ is also valid.

    \item \emph{$T_2^i$ is valid  in~(\ref{seq_5A'}). }

    As   $\Phi_k \xrightarrow{T_5^{ij}}  \Lambda$ is valid in \eqref{seq_5A}, then the $i^{th}$ position is dominant  in $\Phi_k$. Note that the $i^{th}$ position is also dominant in $\Phi'_k$: the $j^{th}$ position (the only difference between $\Phi_k$ and $\Phi'_k$) is not a problem since $\Phi_k [j]<\Phi'_k[j]\leq 0$ and so $|\Phi'_k [j]|<|\Phi_k[j]|$. Thus $\Phi'_k \xrightarrow{T_2^i}  \Lambda$ is  valid.
    \end{enumerate}

%%%%%%%%%%%%%%%%%%%%%%%%%%%%%%%%%%%%%%%%%%%%%%%%%%

\item \framebox{$\mathbf{Type \ 6: \ M_k=T_6^{ij}}$}   
The original sequence~(\ref{original})  becomes 
    \begin{equation} \label{T6}
    \Phi_1 \xrightarrow{M_1} \Phi_2 \xrightarrow{M_2}  \cdots   \xrightarrow{M_{k-1}} \Phi_{k} \xrightarrow{T^{ij}_6} \Lambda,
    \end{equation}
where $M_1,\ldots,M_{k-1}$  are of type 1 to 3, and different from $T_3^{gj}$ by the increasing-decreasing law.

    Note that $\Phi_k[i]\geq\Phi_k[j]\geq 0$, so we will consider two subcases:
\begin{description}

    \item \label{Phi_k[i]=Phi_k[j]} \framebox{$ \mathbf{6A: \ \Phi_k[i]=\Phi_k[j]\geq 0}$} All the following statements are true:

\begin{enumerate}[(i)] 

    \item     \emph{The  $i^{th}$ and $j^{th}$  positions are   dominant in $\Phi_k$.}
    
    Since $\Phi_k[i]=\Phi_k[j]$  and  $T_6^{ij}$ is valid.

    \item   \emph{The $i^{th}$ and $j^{th}$  positions are  dominant in  $\Phi_{1},\ldots, \Phi_{k-1}$.}
    
    By the dominance law. 

    \item  \emph{There is a move $M_{p}=T_1^{(h)^{\frown}(h+1)}$ in \eqref{T6} such that the $i^{th}$ and $j^{th}$ positions where in different tuples in $\Phi_p$, and $M_p$ puts them in the same tuple.}
    
    The $i^{th}$ and $j^{th}$ positions where in different tuples in $\Phi_1=((0),\ldots, (0))$, and they are in the same tuple in $\Phi_k$, so the claim must be true.  
    Note that \eqref{T6} becomes:
    \begin{equation} \label{T6a}
    \Phi_1 \xrightarrow{M_1} \cdots  \xrightarrow{M_{p-1}} \Phi_p  \xrightarrow{T_1^{(h)^{\frown}(h+1)}} \Phi_{p+1} \xrightarrow{M_{p+1}}  \cdots \xrightarrow{M_{k-1}} \Phi_{k} \xrightarrow{T^{ij}_6} \Lambda.
    \end{equation}

    \item  \emph{$\Phi_{p+1}[i]=\Phi_{p+1}[j]$, \ldots, $\Phi_{k}[i]=\Phi_{k}[j]$.}
    
    Since the $i^{th}$ and $j^{th}$ positions are dominant in $\Phi_{p+1}, \ldots, \Phi_k$ by (i) and (ii), and belong to the same tuple in all those states by (iii).

    \item \emph{The values at the $i^{th}$ and $j^{th}$ positions in  $\Phi_{p+1}, \ldots, \Phi_k$  remain unchanged.}
    
    Otherwise, by (iv), both values increase simultaneously  through moves of type 6. This is not possible  since  $M_{p+1},\ldots,M_{k-1}$ are moves of type 1 to 3. 
    
    \item  \emph{$M_{p+1}$ $, \ldots, M_{k-1}$ can not be moves of type 2 or 3 that involve the tuple to which the $i^{th}$ and $j^{th}$ positions belong.}
    
    By (i), (ii), and  (v),  the values in the tuple to which the $i^{th}$ and $j^{th}$ positions belong in states $\Phi_{p+1}, \ldots, \Phi_k$ remain unchanged. 

\end{enumerate}

Now we  modify sequence \eqref{T6a} to the sequence
    \begin{equation} \label{T6a2}
    \Phi_1 \xrightarrow{M_1} \cdots  \xrightarrow{M_{p-1}} \Phi_p \xrightarrow{\text{\circledD{$T_2^i$}}}  \xrightarrow{\text{\circledD{$T_2^j$}}} \Phi'_{p} \xrightarrow{T_1^{(h)^{\frown}(h+1)}}
     \Phi'_{p+1} \xrightarrow{\text{\circledD{$M_{p+1}$}}}   \cdots \xrightarrow{\text{\circledD{$M_{k-1}$}}} \Phi'_k=\Lambda.
    \end{equation}
The sequences~(\ref{T6a}) and~(\ref{T6a2}) end in the same state $\Lambda$ since the only positions that are affected by the changes are the $i^{th}$ and $j^{th}$ positions. In (\ref{T6a}) the $i^{th}$ and $j^{th}$ positions are increased by one with $T_6^{ij}$, and in \eqref{T6a2} the increase of the deleted $T_6^{ij}$ is realized by  $T_2^{i}$ and $T_2^{j}$. 

That $\Lambda$ is $C^*_{\mathbb{Z},3}-$realizable follows if all the moves of~(\ref{T6a2}) are valid: 

\begin{enumerate}
 \item   \emph{$M_1, \ldots, M_{p-1}$ are valid  in \eqref{T6a2}.} 

These moves are equal in~(\ref{T6a}) and in~(\ref{T6a2}).

\item \emph{$T_2^i$ and $T_2^j$ are  valid in \eqref{T6a2}.}  

This is so because the $i^{th}$ and $j^{th}$ positions are dominant in $\Phi_p$ and belong to different tuples  (see (ii) and (iii)).

\item \emph{$M_{p+1}, \ldots, M_{k-1}$ are valid  in \eqref{T6a2}.}

The only differences of $\Phi_{p+1}$ and $\Phi'_{p+1}$ are on the $i^{th}$ and $j^{th}$ positions with $\Phi'_{p+1}[i]=\Phi_{p+1}[i]+1$ and $\Phi'_{p+1}[j]=\Phi_{p+1}[j]+1$. So, by item (vi),  the moves  $M_{p+1}, \ldots, M_{k-1}$  are valid.
\end{enumerate}

\item \framebox{$\mathbf{6B. \ \Phi_k[i]>\Phi_k[j]\geq 0}$} \label{Phi_k[i]>Phi_k[j]}

Taking into account that $T_6^{ij}(\Phi_k)=T_2^i(T_2^j(\Phi_k))$ and that $\Phi_k[i]>\Phi_k[j]\geq 0$,  if in sequence~(\ref{T6}) we replace  $T^{ij}_6$ by two consecutive moves $T_2^j$ and $T_2^i$  we  obtain 
\begin{equation} \label{T6->T2T2}
   \Phi_1 \xrightarrow{M_1}   \cdots  \xrightarrow{M_{k-2}} \Phi_{k-1} \xrightarrow{M_{k-1}} \Phi_{k} \xrightarrow{\text{\circled{$T^{j}_2$}}} T_2^j(\Phi_k) \xrightarrow{T^{i}_2} \Lambda
\end{equation}
%where  $M_h$  for $1 \leq h \leq k-1$ is a valid move for $\Phi_h$ of type 1, 2 or 3 with $M_h\neq T_3^{gj}$. 
where $M_1,\ldots,M_{k-1}$  are of type 1 to 3 and different from $T_3^{gj}$. Note that $T^{j}_2$ is invalid for $\Phi_k$, and so $T_2^j(\Phi_k)$ might not be $C^*_{\mathbb{Z},3}-$realizable. So $T_2^i$ is a valid move applied to a non-realizable state, but this is not a contradiction since the validity of a move does not depend on the realizability of the state in which applies.

Now we start an algorithmic procedure, where the first step is to swap $M_{k-1}$ and $T_2^j$. According to Corollary~\ref{MvalidT2invalid}, we obtain one of the following possibilities:
\begin{enumerate}[(a)]
\item The sequence of valid moves
\begin{equation} \label{Free_Of_Invalid}
   \Phi_1 \xrightarrow{M_1}   \cdots  \xrightarrow{M_{k-2}} \Phi_{k-1} \xrightarrow{T^{j}_2} \Phi'_{k} \xrightarrow{M_{k-1}}  \Phi'_{k+1} \xrightarrow{T^{i}_2} \Lambda.
\end{equation}
If this was the case, the procedure stops. 
%since  we have reached the main goal:  the state $\Lambda$ is $C^*_{\mathbb{Z},3}-$realizable since it  is obtained starting with $\Phi_1=\big((0),\ldots,(0)\big)$ by means of a sequence valid moves of type 1 to 3.

\item  The sequence of  moves
    \begin{equation} \label{Swap_The_Invalid}
   \Phi_1 \xrightarrow{M_1}   \cdots  \xrightarrow{M_{k-2}} \Phi_{k-1} \xrightarrow{T^{j}_2} \Phi'_k \xrightarrow{\text{\circled{$M_{k-1}$}}}  \Phi'_{k+1} \xrightarrow{T^{i}_2} \Lambda.
\end{equation}
  If this is the case, then the procedure also stops.

\item  \label{itemCofProcedure} The sequence of  moves
    \begin{equation} \label{Avance_The_Invalid}
   \Phi_1 \xrightarrow{M_1}   \cdots  \xrightarrow{M_{k-2}} \Phi_{k-1} \xrightarrow{\text{\circled{$T^{j}_2$}}} \Phi'_{k} \xrightarrow{M_{k-1}}  \Phi'_{k+1} \xrightarrow{T^{i}_2} \Lambda,
\end{equation}
where the situation is similar to the one in sequence~\eqref{T6->T2T2} with the invalid move $T_2^j$ in a previous position in the sequence of moves.  Then the procedure continues recursively by swapping $M_{k-2}$ and $T_2^j$ in \eqref{Avance_The_Invalid} obtaining again three possibilities: a sequence similar to \eqref{Free_Of_Invalid}, to \eqref{Swap_The_Invalid} or to \eqref{Avance_The_Invalid}.  We repeat this argument recursively each time that we achieve a situation similar to  the one in sequence~\eqref{Avance_The_Invalid}. 

\end{enumerate}

 Note that the procedure does not finish in a situation similar to~\eqref{itemCofProcedure} since  if we take $T_2^j$ all the way back to the beginning of the sequence it becomes valid since $T_2^j$ is valid for $\Phi_1=\big((0),\ldots,(0)\big)$. So the procedure would finish in one of the following situations:
\begin{enumerate}[(a')]
    \item A sequence of valid moves:
\begin{equation} \label{Free_Of_Invalid_Finishitem }
   \Phi_1 \xrightarrow{M_1}  \cdots\xrightarrow{M_{p-1}} \Phi_p \xrightarrow{T^{j}_2}\xrightarrow{M_p}\xrightarrow{M_{p+1}}  \cdots\xrightarrow{M_{k-1}}   \xrightarrow{T^{i}_2} \Lambda
\end{equation}
If this was the case, then we have reached the main goal:  the state $\Lambda$ is $C^*_{\mathbb{Z},3}-$realizable since it  is obtained starting with $\Phi_1=\big((0),\ldots,(0)\big)$ by means of a sequence valid moves of type 1 to 3.

\item A sequence 
\begin{equation} \label{after_last_swap}
   \Phi_1 \xrightarrow{M_1}  \cdots\xrightarrow{M_{p-1}} \Phi_p \xrightarrow{T^{j}_2}\xrightarrow{\text{\circled{$M_p$}}}\xrightarrow{M_{p+1}}  \cdots\xrightarrow{M_{k-1}}   \xrightarrow{T^{i}_2} \Lambda.
\end{equation}
where all moves are of type 1 to 3.

\end{enumerate}

Now, in the next pages, we will analyse thoroughly   what to do when we arrive to sequence~(\ref{after_last_swap}) depending on what type of move is $M_p$. Note that  $M_p$ is not of type 1 due to  Lemma~\ref{T1validT2invalid} and so we will analyze the other two possibilities:

%{\colb First note that if we take $T_2^j$ all the way back to the beginning of the sequence it becomes valid since $\Phi_1=\big((0),\ldots,(0)\big)$, that is, we cannot end the process with a sequence similar to~\eqref{Avance_The_Invalid}. So the final result is either a sequence similar to~\eqref{Free_Of_Invalid} of valid moves of type 1 to 3 in which case we have finished, or a sequence similar to the one in sequence (\ref{Swap_The_Invalid}) as follows (for convenience we will skip intermediate states when we are not interested on them):
%\begin{equation} \label{after_last_swap_bis}
%   \Phi_1 \xrightarrow{M_1}  \cdots\xrightarrow{M_{p-1}} \Phi_p \xrightarrow{T^{j}_2}\xrightarrow{\text{\circled{$M_p$}}}\xrightarrow{M_{p+1}}  \cdots\xrightarrow{M_{k-1}}   \xrightarrow{T^{i}_2} \Lambda
%\end{equation}
%where all moves of~(\ref{after_last_swap}) are of type 1 to 3. On the other hand, $M_p$ is not of type 1 due to  Lemma~\ref{T1validT2invalid} and so we will analyze the other two possibilities: }  

\begin{description}

    \item  $\mathbf{6B.I.}$ \ $M_p=T_2^g$.  
    
    Then sequence~(\ref{after_last_swap}) is
\begin{equation*} %\label{invalidWithoutGap}
   \Phi_1 \xrightarrow{M_1}  \cdots\xrightarrow{M_{p-1}} \Phi_p \xrightarrow{T^{j}_2}\xrightarrow{\text{\circled{$T_2^g$}}}\xrightarrow{M_{p+1}}  \cdots\xrightarrow{M_{k-1}}   \xrightarrow{T^{i}_2} \Lambda. 
\end{equation*}
Note that   the last of the swaps of $T_2^j$ is as follows:
$$
\Phi_p \xrightarrow{T_2^{g}} \xrightarrow{\text{\circled{$T^{j}_2$}}}\qquad \Rightarrow \qquad  \Phi_p \xrightarrow{T^{j}_2} \xrightarrow{\text{\circled{$T_2^{g}$}}}.
$$
This happens, according to  item \eqref{iia} of the proof of Lemma~\ref{T2validT2invalid},   when the $j^{th}$ and  $g^{th}$ positions on $\Phi_p$ belong to the same tuple and  $\Phi_p[j]=\Phi_p[g]$. Then   we can merge $T_2^j$ and $T_2^g$  into $T_6^{jg}$ to obtain a new sequence:
\begin{equation*} %\label{T6-jr}
   \Phi_1 \xrightarrow{M_1}  \cdots\xrightarrow{M_{p-1}} \Phi_p \xrightarrow{T^{jg}_6}\xrightarrow{M_{p+1}}  \cdots\xrightarrow{M_{k-1}}   \xrightarrow{T^{i}_2} \Lambda. 
\end{equation*}
 where $T_6^{jg}$ is a valid move. And since $\Phi_p[j]=\Phi_p[g]$ then we can proceed as in the subcase 6A
 %\ref{Phi_k[i]=Phi_k[j]} 
 to prove that $T_6^{jg}(\Phi_p)$ is $C^*_{\mathbb{Z},3}-$realizable, and so $\Lambda$ is also $C^*_{\mathbb{Z},3}-$realizable.
 
 %{\colb to get rid of the type 6 move so that we will attain the same final state only by means of type 1 to 3 moves.} 

%%%%%%%%%%%%%%%%%%%%%%%%%%%%%%

\item $\mathbf{6B.II.}$ \ $M_p=T_3^{gh}$.  

The original sequence \eqref{T6}  is then 
\begin{align*} 
  \Phi_1 \xrightarrow{M_1} \cdots \xrightarrow{M_{p-1}}  
  \Phi_p \xrightarrow{T_3^{gh}} \Phi_{p+1} \xrightarrow{M_{p+1}}\cdots \xrightarrow{M_{k-1}}
  \Phi_k \xrightarrow{T^{ij}_6}   \Lambda.
 \end{align*}
Note that $h\neq j$ by the increasing-decreasing law. As the first part of the sequence 
 \begin{align*} 
  \Phi_1 \xrightarrow{M_1} \cdots \xrightarrow{M_{p-1}}  \Phi_p 
 \end{align*}
composed of valid moves of type 1 to 3 will not change in our arguments below, we consider only part of the sequence, namely from  $\Phi_p$ ahead. That is, 
\begin{align} \label{original2}
  \Phi_p \xrightarrow{T_3^{gh}} \Phi_{p+1} \xrightarrow{M_{p+1}}\cdots \xrightarrow{M_{k-1}}
  \Phi_k \xrightarrow{T^{ij}_6}   \Lambda
 \end{align}
that after the swaps of $T_2^j$  becomes 
\begin{align} \label{invalidWithoutGap}
   \Phi_p \xrightarrow{T^{j}_2} \Psi_{p+1}\xrightarrow{\text{\circled{$T_3^{gh}$}}}\Psi_{p+2}\xrightarrow{M_{p+1}} \cdots \xrightarrow{M_{k-1}}   \Psi_{k+1}\xrightarrow{T^{i}_2} \Lambda.
\end{align}
 Note that in  the last of the swaps of $T_2^j$ we have
$$
\Phi_p \xrightarrow{T_3^{gh}} \Phi_{p+1}\xrightarrow{\text{\circled{$T^{j}_2$}}}\Psi_{p+2}\quad \Rightarrow \quad  \Phi_p \xrightarrow{T^{j}_2} \Psi_{p+1}\xrightarrow{\text{\circled{$T_3^{gh}$}}}\Psi_{p+2}
$$
 that corresponds to item \eqref{L8-valid-invalid-case} of the proof of Lemma~\ref{T3validT2invalid}. And  so $j \neq g$, the $j^{th}$ and   $g^{th}$ positions in $\Phi_p$ belong to the same tuple, and  $\Phi_p[j]=\Phi_p[g]$.

Consider the modification of~(\ref{invalidWithoutGap}) given by 
\begin{equation} \label{invalidWithoutGap2}
   \Phi_p 
   \xrightarrow{T^{j}_2} \Psi_{p+1}\xrightarrow{\text{\circledD{$T_2^{g}$}}}  \Psi'_{p+2} \xrightarrow{\text{\circledD{$M_{p+1}$}}} \Psi'_{p+3}\xrightarrow{}  \cdots \xrightarrow{\text{\circledD{$M_{k-1}$}}}   \Psi'_{k+1} \xrightarrow{\text{\circledD{$T^{ih}_3$}}}  \Lambda, 
\end{equation}

where $T_3^{gh}$ and $T_2^{i}$ in~\eqref{invalidWithoutGap} are  replaced, respectively, by $T_2^{g}$ and $T_3^{ih}$ in~\eqref{invalidWithoutGap2}. The only position  affected by the changes is the $h^{th}$  position and, clearly, the final state in both sequences is $\Lambda$. 
 
In what follows, we will show that sequence~\eqref{invalidWithoutGap2} is composed of valid moves of type 1 to 3 except the move $T_2^{g}$ that is invalid.  For this we will use the information of the following diagram: 
\begin{align} \label{new-h}
     0>  & \Psi_{p+2}[h]\geq \Psi_{p+3}[h]\geq \cdots\geq  \Psi_{k}[h] \geq\Psi_{k+1}[h] \nonumber \\
      & \quad\;    \wedge \qquad \quad\;  \wedge \qquad \;\;  \cdots \qquad \; \wedge \quad \qquad \wedge \\
    0 \geq & \Psi'_{p+2}[h]\geq \Psi'_{p+3}[h]\geq\cdots\geq \Psi'_{k}[h] \geq\Psi'_{k+1}[h]  \nonumber
     \end{align}     
that we  obtain from the three following items:
\begin{enumerate}[(i)]
\item Let us see first that $0>\Psi_{p+2}[h]$. Note that   
\begin{equation*}\label{Psi_p+2>0}
\Psi_{p+2}[h]=\Psi_{p+1}[h]-1=\Phi_{p}[h]-1<0
\end{equation*}
where the  first equality follows from~\eqref{invalidWithoutGap}, the second equality follows from~\eqref{invalidWithoutGap} since $h\neq j$, and the third inequality follows from~\eqref{original2} since $T_3^{gh}$ is valid for $\Phi_p$ which implies that $\Phi_{p}[h]\leq 0$.
\item After that, as $M_{p+1},\ldots, M_{k-1}$ are valid moves in~\eqref{invalidWithoutGap}, then by the increasing-decreasing law we have that  
\begin{equation*}\label{Psi's}
\Psi_{p+2}[h]\geq \Psi_{p+3}[h]\geq \cdots\geq \Psi_{k+1}[h].
\end{equation*}
    \item And to finish, as $T_3^{gh}$ in~\eqref{invalidWithoutGap} becomes $T_2^g$ in~\eqref{invalidWithoutGap2} then 
\begin{equation*}\label{Psi'=Psi+1}
\Psi'_{t}[h]=\Psi_{t}[h]+1\quad \text{ for }t=p+2,\ldots,k+1.
\end{equation*}

\end{enumerate}

Now we have the  tools for determining  the validity of the moves in sequence~\eqref{invalidWithoutGap2}:

\begin{enumerate}[1.]
\item \emph{$T_2^{j}$ is valid in ~\eqref{invalidWithoutGap2}.}

This is so by hypothesis.

\item \label{T2g-invalid} \emph{$T_2^{g}$ is invalid   in ~\eqref{invalidWithoutGap2}.}

This is so because the $g^{th}$ position is not dominant in $\Psi_{p+1}$. And this follows from the following facts:   $j\neq g$,    the $j^{th}$ and   $g^{th}$ positions in $\Phi_p$ belong to the same tuple, and  $\Phi_p[j]=\Phi_p[g]$. These claims were proved in the paragraph just after sequence~\eqref{invalidWithoutGap}.

\item \emph{$M_{p+1},\ldots,M_{k-1}$ are valid in ~\eqref{invalidWithoutGap2}.}

From diagram~\eqref{new-h} it follows that for $t=p+2,\ldots,k$ the states  $\Psi_{t}$ and $\Psi'_{t}$ are equal  except that   $\Psi_{t}[h]<\Psi'_{t}[h]\leq 0$.   On the other hand, sequence~\eqref{invalidWithoutGap}  contains the move $\Psi_{t} \xrightarrow{M_{t-1}} \Psi_{t+1}$ which is valid  of type 1 to 3. Then,  by Corollary~\ref{Lemma-Phi2PhiPrima}, $\Psi'_{t} \xrightarrow{M_{t-1}} \Psi'_{t+1}$ is also valid.

\item \emph{$T_3^{ih}$ is valid in~\eqref{invalidWithoutGap2}.}

Now we verify the three conditions for $T_3^{ih}$ to be valid:
\begin{itemize}
    \item The $i^{th}$ and $h^{th}$ positions of state $\Psi'_{k+1}$  are in the same tuple:  
    \begin{itemize}
    \item The last move in  sequence  \eqref{original2} is a valid $T_6^{ij}$. This implies that the $i^{th}$ and $j^{th}$ position of states $\Phi_k$ and $\Lambda$ are in the same tuple.
    
    \item As we showed in the paragraph after sequence~\eqref{invalidWithoutGap}, the $j^{th}$ and $g^{th}$ position of $\Phi_p$ are in the same tuple. By the permanence law they also are in the same tuple of $\Lambda$.
    
    \item From sequence~\eqref{original2} it follows that the $g^{th}$ and $h^{th}$ positions of $\Phi_p$ are in the same tuple. By the permanence law they are in the same tuple of $\Lambda$.    
    
    \item By transitivity 
    \begin{align} \label{transitivity-i-h}
    \text{the $i^{th}$ and $h^{th}$ positions of $\Lambda$ are in the same tuple.}
    \end{align}

    \item Independently of its validity, the move $T_3^{ih}$ in \eqref{invalidWithoutGap2} maintains the structure of tuples in $\Psi'_{k+1}$ and $\Lambda$. So the $i^{th}$ and $h^{th}$ positions are also in the same tuple of $\Psi'_{k+1}$.
    \end{itemize}
    
    \item The $i^{th}$ position is dominant in $\Psi'_{k+1}$.

    Comparing sequences~\eqref{original2} and~\eqref{invalidWithoutGap2}, as $j\neq h$,  we have that 
    \begin{align} \label{Phi_k&Delta}
        \begin{matrix}
            \Psi'_{k+1}[j]=\Phi_k[j]+1 \\
            \Psi'_{k+1}[h]=\Phi_k[h]+1 \\
            \Psi'_{k+1}[t]=\Phi_k[t] & \text{for each $t\neq j,h$.}
        \end{matrix}
    \end{align} 
In particular, $\Psi'_{k+1}[i]=\Phi_k[i]$ since $i\neq j,h$. 

The $i^{th}$ position is dominant in $\Phi_k$, see~\eqref{original2}, so the $j^{th}$ and $h^{th}$  are the only positions that could  change the dominance of the $i^{th}$ position in $\Psi'_{k+1}$. Let us see that this is not the case: 
    
    \begin{itemize}

        \item  $\Psi'_{k+1}[i]\geq |\Psi'_{k+1}[j]|$.

We will prove that 
\begin{align*} %\label{first_part}
\Psi'_{k+1}[i]=\Phi_k[i]\geq  |\Psi'_{k+1}[j]|.
\end{align*}
The first equality  follows from~\eqref{Phi_k&Delta},  for  the second inequality apply~\eqref{Phi_k&Delta} to  $\Phi_k[i]>\Phi_k[j]\geq 0$ which is  the  hypothesis of this case:~6B. %\ref{Phi_k[i]>Phi_k[j]}.

    \item  $\Psi'_{k+1}[i]> |\Psi'_{k+1}[h]|$.

We will prove that 
\begin{align*} %\label{first_part}
\Psi'_{k+1}[i]=\Phi_k[i]\geq |\Phi_k[h]| > |\Psi'_{k+1}[h]|.
\end{align*}
The first equality  follows from~\eqref{Phi_k&Delta}. 

Let us prove the   second inequality. Note that  the $i^{th}$ and $h^{th}$ positions belong to the same tuple in $\Lambda$, as we have seen in~\eqref{transitivity-i-h}. Since $\Phi_k \xrightarrow{T_6^{ij}}\Lambda$  maintains the structure of the tuples in $\Phi_k$ and $\Lambda$, then  the $i^{th}$ and $h^{th}$ positions also belong to same tuple in $\Phi_k$. And so the   second inequality follows from the dominance of  the $i^{th}$ position in $\Phi_k$.

Finally, let us see the third inequality. Note that $\Phi_p \xrightarrow{T_3^{gh}}\Phi_{p+1}$ is valid, which implies that $\Phi_{p+1}[h]<0$. By the increasing-decreasing law, $\Phi_k[h]<0$. And now  the third inequality follows  from~\eqref{Phi_k&Delta}.

\end{itemize}

    \item $\Psi'_{k+1}[h]\leq 0$.

   It follows from~\eqref{Phi_k&Delta} since $\Phi_{k}[h]<0$, as we have just seen above.

\end{itemize}

\end{enumerate}

So we have seen that all the moves of sequence~\eqref{invalidWithoutGap2} are valid  with the exception of $T_2^g$. On the other hand, as we pointed out in the paragraph just after sequence~\eqref{invalidWithoutGap}, we have that $j\neq g$,  the $j^{th}$ and the $g^{th}$ position in $\Phi_p$ belong to the same tuple, and $\Phi_p[j]=\Phi_p[g]$. This means that we can substitute  the pair of moves $$\Phi_p \xrightarrow{T^{j}_2}\Psi_{p+1}\xrightarrow{\text{\circled{$T_2^{g}$}}}  \Psi'_{p+2}$$ in  \eqref{invalidWithoutGap2} by the valid move  $T^{jg}_6$   to obtain the sequence of valid moves:
\begin{equation*} %\label{invalidWithoutGap3}
   \Phi_p \xrightarrow{T^{jg}_6} \Psi'_{p+2} \xrightarrow{M_{p+1}}\Psi'_{p+3} \xrightarrow{M_{p+2}}  \cdots\xrightarrow{M_{k-1}} \Psi'_{k+1}  \xrightarrow{T^{ih}_3} \Lambda. 
\end{equation*}

Then we need to repeat our analysis of the original sequence \eqref{T6} with the shorter sequence
\begin{equation*} %\label{invalidWithoutGap4}
   \Phi_1 \xrightarrow{M_1}  \cdots\xrightarrow{M_{p-1}} \Phi_p \xrightarrow{T^{jg}_6} \Psi'_{p+2}.
\end{equation*}
%that corresponds to the subcase 6A. 
Since $\Phi_p[j]=\Phi_p[g]$ then we can proceed as in the subcase 6A, 
 %\ref{Phi_k[i]=Phi_k[j]} 
 where we proved that $\Psi'_{p+2}$ is $C^*_{\mathbb{Z},3}-$realizable. And so $\Lambda$ is also $C^*_{\mathbb{Z},3}-$realizable.
%\ref{Phi_k[i]=Phi_k[j]} 

%{\colb In that way we will get rid off the type 6 move in such a way that we attain the same final state only by means of type 1 to 3 moves.}
\end{description}
\end{description}
\end{description}
\end{proof}

%%%%%%%%%%%%%%%%%%%%%%%%%%%%%%%%%%%%%%%%%%%%%%%%%

\section{Final remarks} \label{Final_remarks}

The main theorem of this work and the techniques developed, seem to open an array of interesting questions. The most obvious question is to decide if the main theorem can be extended to rationals and to reals. On the other hand,  the simplification of the $C-$realizability to just three moves, and the  technique of swapping moves that has been extensively used, 
poses the question of finding a  way of reorganizing the moves to obtain an order that can be considered canonical. In subsection~\ref{reordering-moves} we propose such a canonical order.  
%: Can a good reorganization of the moves have advantages?

Also, associated to the $C-$realizability we have a decision problem. That is, given a multiset, is it $C-$realizable? In~\cite{bc2017}, we studied this problem and concluded that it is NP-hard. But in this work we have considered states, or ordered lists. So, we might ask about the complexity of the decision problem of knowing if a given state is $C^*_{\mathbb{Z},3}-$realizable or not. After considering the canonical order that was mentioned in the previous paragraph, it might happen that the decision problem associated to states has polynomial complexity, and that the NP-hardness of the decision problem for multisets comes from factorial amount of ways to order a multiset.

To conclude, we will talk more about some of the questions raised.

%The main results of this work is, Theorem~\ref{avoidingtype456},  that $C_{\mathbb{Z},6}$ and $C_{\mathbb{Z},3}-$realizability are equivalent. 

\subsection{Extension of the main theorem to rationals and to reals} \label{extension-rationals-reals}

We would like to extend the main result of this work, Theorem~\ref{avoidingtype456}, to rationals. That is, to prove the equivalence between  $C_{\mathbb{Q},6}$ and $C_{\mathbb{Q},3}-$realizability. 
First, note that  $C_{\mathbb{Z},3}-$realizability implies  $C_{\mathbb{Q},3}-$realizability  in the following sense.

\begin{lemma} \label{CZ_realizability_to_CQ}
Let $\{\lambda_1,\ldots,\lambda_n\}$ be a multiset of integers numbers that is $C_{\mathbb{Z},3}-$realizable, then for any integer $q$ the multiset  $\{\frac{\lambda_1}{q},\ldots,\frac{\lambda_n}{q}\}$  is $C_{\mathbb{Q},3}-$realizable.
\end{lemma}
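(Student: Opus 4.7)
The plan is to show that any $C_{\mathbb{Z},3}$-derivation of $\{\lambda_1,\ldots,\lambda_n\}$ from the basic multisets $\{0\},\ldots,\{0\}$ can be rescaled by the factor $1/q$ to produce a $C_{\mathbb{Q},3}$-derivation of $\{\lambda_1/q,\ldots,\lambda_n/q\}$. I will take $q$ to be a positive integer (for $q<0$ the Perron root of the scaled multiset need not agree with the image of the Perron root of the original, so the statement should be read with $q>0$).

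Concretely, suppose we have a sequence of multisets
\[
\{0\},\ldots,\{0\} = \Omega_0 \longrightarrow \Omega_1 \longrightarrow \cdots \longrightarrow \Omega_s = \{\lambda_1,\ldots,\lambda_n\}
\]
where each step is obtained by one of the three rules \eqref{move-i}--\eqref{move-iii} of Theorem~\ref{guo-etc-2} (so $\epsilon = 1$ throughout). For every intermediate multiset $\Omega_t$ I define its rescaled version $\tfrac{1}{q}\Omega_t := \{\mu/q : \mu \in \Omega_t\}$. The starting multisets $\{0\}$ are fixed by this rescaling, and the final multiset becomes $\{\lambda_1/q,\ldots,\lambda_n/q\}$, so the rescaled sequence starts and ends in the right places.

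The remaining step is to verify that every transition $\tfrac{1}{q}\Omega_t \to \tfrac{1}{q}\Omega_{t+1}$ is a legitimate application of one of the three rules \eqref{move-i-guo}--\eqref{move-iii-guo} of Theorem~\ref{guo-etc} with $\mathbb{S}=\mathbb{Q}$ and $\epsilon = 1/q$. Rule \eqref{move-i-guo} (joining two multisets) commutes with rescaling trivially, since it does not involve $\epsilon$. For rule \eqref{move-ii-guo}, multiplication by the positive scalar $1/q$ preserves which element is the Perron root, and shifting that element by $+1$ in the integer world becomes a shift by $+1/q$ in the rational world, which is exactly rule \eqref{move-ii-guo} with $\epsilon = 1/q$. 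For rule \eqref{move-iii-guo}, the hypothesis $\lambda_2 \leq 0$ is preserved by dividing by the positive number $q$, and the simultaneous shifts $\pm 1$ become simultaneous shifts $\pm 1/q$, which matches rule \eqref{move-iii-guo} with $\epsilon = 1/q$.

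This is a routine rescaling argument and I do not anticipate any obstacle: the only thing to watch is the sign of $q$, which is why the proof should assume $q>0$. Once this is verified step by step, the scaled sequence is a $C_{\mathbb{Q},3}$-derivation of $\{\lambda_1/q,\ldots,\lambda_n/q\}$, completing the proof.
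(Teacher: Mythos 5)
Your proposal is correct and is essentially the paper's own proof: the paper dispatches the lemma in one sentence by observing that the entire derivation can be reproduced with a factor $\frac{1}{q}$ inserted into every rule of type~\eqref{move-ii} and~\eqref{move-iii}, which is exactly your rescaling argument spelled out step by step. Your remark that $q$ must be taken positive is a fair observation about the wording of the statement (the paper says ``any integer $q$'' but clearly intends $q>0$), and does not affect the substance.
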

\begin{proof}
   Is  trivial, since all the process that start with $\{0\},\ldots,\{0\}$ and finish with $\{\lambda_1,\ldots,\lambda_n\}$ by applying a sequence of rules of type~\eqref{move-i}--\eqref{move-iii} of Theorem~\ref{guo-etc-2} can be reproduced introducing a factor $\frac1q$ in all the rules of type~\eqref{move-ii} and \eqref{move-iii}. 
\end{proof}

 On the other hand, if a multiset  $\Lambda$ is  $C_{\mathbb{Q},3}-$realizable by a a sequence of rules of type~\eqref{move-i-guo}--\eqref{move-iii-guo} of Theorem~\ref{guo-etc}   then there exists  an integer $q$ that depends on the entries of $\Lambda$ and on the rules applied such that $q \cdot \Lambda$ is $C_{\mathbb{Z},3}-$realizable.
 
\begin{lemma} \label{CQ_realizability_Q}
Let $\Lambda=\{\frac{\lambda_1}{\alpha_1},\ldots,\frac{\lambda_n}{\alpha_n}\}$ be a multiset of rational numbers that is $C_{\mathbb{Q},3}-$realizable by the sequence $M_1,\ldots,M_r$ of  rules of type~\eqref{move-i-guo}--\eqref{move-iii-guo} of Theorem \ref{guo-etc}. Let $M_{i_1}, \ldots, M_{i_s}$ be the subsequence of all the rules of types \eqref{move-ii-guo} or \eqref{move-iii-guo}, and let $\frac{\mu_1}{\beta_1},\ldots,\frac{\mu_s}{\beta_s}$ be the $\epsilon$'s involved in these rules. 
If $q=lcm(\alpha_1,\ldots,\alpha_n,\beta_1,\ldots,\beta_s)$ then $\Lambda'=\{q\cdot \frac{\lambda_1}{\alpha_1},\ldots,q\cdot\frac{\lambda_n}{\alpha_n}\}$ is $C_{\mathbb{Z},3}-$realizable.
\end{lemma}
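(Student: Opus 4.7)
The plan is to mirror, step by step, the given $C_{\mathbb{Q},3}$-realization of $\Lambda$ by its scaled version over the integers. Explicitly, let
$\Lambda^{(0)}=\{0\},\{0\},\ldots,\{0\}$ and, for $t=1,\ldots,r$, let $\Lambda^{(t)}=M_t(\Lambda^{(t-1)})$ be the intermediate rational multiset configuration after applying the first $t$ rules, so that $\Lambda^{(r)}=\Lambda$. I would then define the integer configuration $\widetilde{\Lambda}^{(t)}:=q\cdot\Lambda^{(t)}$ (scale every entry of every multiset by $q$) and prove by induction on $t$ that $\widetilde{\Lambda}^{(t)}$ is $C_{\mathbb{Z},3}$-realizable, with $\widetilde{\Lambda}^{(r)}=\Lambda'$ as required.

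The induction base is immediate since $q\cdot\{0\}=\{0\}$. For the inductive step, the plan is to transform each rule $M_t$ into a finite block of rules of Theorem~\ref{guo-etc-2} according to its type. If $M_t$ is a join (rule~\eqref{move-i-guo}), the scaled operation is literally the same join between two multisets of $\mathbb{Z}$, which is rule~\eqref{move-i} of Theorem~\ref{guo-etc-2}. If $M_t$ is of type~\eqref{move-ii-guo} or~\eqref{move-iii-guo} with $\epsilon_t=\mu_j/\beta_j$, then on the scaled side the Perron root must be increased by the positive integer $q\epsilon_t=q\mu_j/\beta_j\in\mathbb{Z}$ (because $\beta_j\mid q$), while in the type~\eqref{move-iii-guo} case a non-positive entry must be simultaneously decreased by $q\epsilon_t$. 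I would realize each such bulk change as a string of $q\epsilon_t$ consecutive unit-step applications of rule~\eqref{move-ii} or rule~\eqref{move-iii} of Theorem~\ref{guo-etc-2}, exactly as described in the paragraph preceding Theorem~\ref{guo-etc-2} (iterating the integer unit rule $\epsilon$ times).

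Two routine checks need to be dispatched along the way. First, one must confirm that $\widetilde{\Lambda}^{(t)}$ genuinely consists of integers at every step: since every entry of $\Lambda^{(t)}$ is obtained from $0$ by adding or subtracting some of the $\epsilon$'s $\mu_1/\beta_1,\ldots,\mu_s/\beta_s$ (joins never create new denominators), every entry of $\Lambda^{(t)}$ lies in $\tfrac{1}{q}\mathbb{Z}$, and so $\widetilde{\Lambda}^{(t)}$ is integer. Second, one must check that the hypotheses of each scaled rule are preserved: since $q>0$, an entry $\lambda_2\le 0$ stays $\le 0$ after multiplication by $q$, and the Perron root of the scaled multiset is $q$ times the Perron root of the original (any nonnegative matrix realizing a multiset yields, after multiplication by $q$, a nonnegative matrix realizing the scaled multiset). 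For the unit decompositions of a single rule~\eqref{move-iii-guo} into $q\epsilon_t$ steps, one must also verify that the intermediate multisets obtained at each unit step satisfy $\lambda_2\le 0$: this is fine because rule~\eqref{move-iii} decreases a non-positive value, so it stays non-positive throughout.

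There is no deep obstacle here; the lemma is essentially a bookkeeping result about how denominators propagate through the three rules. The only point that needs care is making sure the $\epsilon$'s used in the unit decompositions really are $1$ in all the intermediate steps, which follows from the observation above. Once this is in place, the full integer sequence thus produced realizes $\Lambda'$ from $\{0\},\ldots,\{0\}$ using only rules~\eqref{move-i}--\eqref{move-iii} of Theorem~\ref{guo-etc-2}, proving $\Lambda'$ is $C_{\mathbb{Z},3}$-realizable.
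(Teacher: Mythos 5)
Your proposal is correct and is essentially the paper's argument: the paper simply replaces each $\epsilon_{i_k}$ by $q\cdot\epsilon_{i_k}$ (an integer since $\beta_k\mid q$) and reuses the same sequence of rules of Theorem~\ref{guo-etc} over $\mathbb{Z}$. Your additional decomposition of each scaled rule into $q\epsilon_t$ unit steps of Theorem~\ref{guo-etc-2}, and the explicit checks of integrality and of the sign/dominance hypotheses, are harmless extra bookkeeping that the paper leaves implicit.
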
 

\begin{proof}
We will construct a sequence of rules $M'_1,\ldots, M'_r$ of type~\eqref{move-i-guo}--\eqref{move-iii-guo} of Theorem \ref{guo-etc} for integers based on the given sequence.  If $M_j$ is a rule of type~\eqref{move-i-guo}, let $M'_j=M_j$. For $k=1,\ldots,s$ substitute  the rule $M_{i_k}$  by  rule $M'_{i_k}$ where $\epsilon_{i_k}$ becomes $q\cdot \epsilon_{i_k}$. Then $\Lambda'$ is   $C_{\mathbb{Z},3}-$realizable by the sequence of rules $M'_1,\ldots,M'_r$.
\end{proof}

In the premises of Lemma~\ref{CQ_realizability_Q} we know that  $\{\frac{\lambda_1}{\alpha_1},\ldots,\frac{\lambda_n}{\alpha_n}\}$ is $C_{\mathbb{Q},3}-$realizable  by means of the concrete rules $M_1,\ldots,M_r$ and so we know the $\epsilon$'s involved in the rules. These   $\epsilon$'s  have an important role in the conclusion, and this is what we want to avoid.
That is, the only premise we want to start from is that $\{\frac{\lambda_1}{\alpha_1},\ldots,\frac{\lambda_n}{\alpha_n}\}$ is $C_{\mathbb{Q},3}-$realizable, and the  optimal result that we wish to achieve  is:

\begin{conjecture} \label{Conj_CQ_realizability}
Let $\{\frac{\lambda_1}{\alpha_1},\ldots,\frac{\lambda_n}{\alpha_n}\}$ be a multiset  $C_{\mathbb{Q},3}-$realizable and let $p=lcm(\alpha_1,\ldots,\alpha_n)$. Then $\{\frac{\lambda_1}{\alpha_1},\ldots,\frac{\lambda_n}{\alpha_n}\}$  is $C_{\mathbb{Q},3}-$realizable if and only if $\{p\cdot \frac{\lambda_1}{\alpha_1},\ldots,p\cdot\frac{\lambda_n}{\alpha_n}\}$ is $C_{\mathbb{Z},3}-$realizable.
\end{conjecture}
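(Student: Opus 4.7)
The backward implication $(\Leftarrow)$ is immediate: Lemma~\ref{CZ_realizability_to_CQ} with $q = p$ converts any $C_{\mathbb{Z},3}-$realization of $\{p\lambda_i/\alpha_i\}$ into a $C_{\mathbb{Q},3}-$realization of $\{\lambda_i/\alpha_i\}$ by inserting a factor $1/p$ in every rule of types~\eqref{move-ii-guo} and~\eqref{move-iii-guo} of Theorem~\ref{guo-etc}.

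For the forward implication $(\Rightarrow)$, I would first apply Lemma~\ref{CQ_realizability_Q} to some fixed $C_{\mathbb{Q},3}-$realization, obtaining an integer $q$ divisible by $p$ such that $\{q\lambda_i/\alpha_i\}$ is $C_{\mathbb{Z},3}-$realizable. Writing $q = pk$, the conjecture reduces to the following auxiliary claim: if $\{km_1,\ldots,km_n\}$ is $C_{\mathbb{Z},3}-$realizable for some positive integer $k$ and integers $m_i$, then $\{m_1,\ldots,m_n\}$ is also $C_{\mathbb{Z},3}-$realizable. In effect, one must show that any rational multiset which is $C_{\mathbb{Q},3}-$realizable using some ``large'' common denominator can already be realized using only $\epsilon$'s whose denominator divides $p$.

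My approach to the auxiliary claim would go through the state formulation. By Lemma~\ref{Crealiz_C3realiz}, fix a sequence $\sigma$ of valid moves of types~1 to~3 taking $((0),\ldots,(0))$ to some ordering of $(km_1,\ldots,km_n)$. Using Corollary~\ref{T2_begining}, push all type-2 moves to the beginning of $\sigma$. I would then try to show that the remaining moves can be rearranged so that type-2 and type-3 moves appear in consecutive blocks of $k$ identical copies (with type-1 joins distributed compatibly), and that collapsing each such block to a single move produces a valid sequence terminating at $(m_1,\ldots,m_n)$. The validity of the collapsed sequence would be checked using arguments parallel to the technical machinery of Section~\ref{Technical_Lemmas_2}, exploiting that dominance conditions in the original sequence persist after the collapse since the final state has all entries divisible by $k$.

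The principal obstacle is that intermediate states along a $C_{\mathbb{Z},3}-$realization of $\{km_i\}$ are generally not divisible by $k$, so no naive ``pointwise division by $k$'' is available. Producing the uniform grouping into $k$-blocks of identical moves appears to require a much richer swap calculus than the one developed in Section~\ref{Technical_Lemmas}; for instance, one would need to analyse swaps between two type-3 moves whose index pairs share a position, and swaps past type-1 joins that can destroy admissibility (as noted in the counterexample after Lemma~\ref{T3validT2invalid}). Absent some new structural insight, I do not see how to carry this grouping out, and the authors' own remark that they ``have not been able to prove'' the $\mathbb{Q}$ analogue strongly suggests that this combinatorial obstruction is genuine. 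A successful proof may instead require a completely different strategy, such as leveraging the Ellard-\v{S}migoc equivalence or a direct matrix-theoretic scaling argument that tracks the $p$-adic structure of a concrete nonnegative realization.
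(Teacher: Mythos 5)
The statement you have been asked to prove is labelled a \emph{conjecture} in the paper: the authors do not prove it, and in Section~\ref{extension-rationals-reals} they explicitly state that they have been unable to. So there is no proof in the paper to compare yours against and, as you yourself acknowledge, your proposal is not a proof either.

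The rigorous part of your argument coincides exactly with what the paper does establish. The backward implication via Lemma~\ref{CZ_realizability_to_CQ} with $q=p$ is correct. For the forward implication, your use of Lemma~\ref{CQ_realizability_Q} to produce a multiple $q=pk$ of $p$ such that $\{q\lambda_i/\alpha_i\}$ is $C_{\mathbb{Z},3}-$realizable, followed by the reduction to the auxiliary claim that $C_{\mathbb{Z},3}-$realizability of $\{km_1,\ldots,km_n\}$ implies that of $\{m_1,\ldots,m_n\}$, is precisely the reduction the authors record immediately after stating the conjecture; they then write that they ``have not been able to prove this statement.'' Your diagnosis of the obstruction --- that intermediate states along a realization of $\{km_i\}$ need not have entries divisible by $k$, so no pointwise division is available, and that forcing the moves into blocks of $k$ identical copies would require a swap calculus well beyond that of Section~\ref{Technical_Lemmas} --- is a sensible articulation of why the naive attack fails, and your suggested alternatives (the Ellard--\v{S}migoc equivalence, a direct matrix-theoretic scaling argument) are reasonable directions to explore. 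But none of this closes the gap: the auxiliary claim is exactly the open step, and your proposal leaves it open in the same place the paper does. In short, you have correctly reproduced the known reduction and correctly located the missing ingredient, but the statement remains unproven.
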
 

Lemma~\ref{CQ_realizability_Q} says that $\{q\cdot \frac{\lambda_1}{\alpha_1},\ldots,q\cdot\frac{\lambda_n}{\alpha_n}\}$ is $C_{\mathbb{Z},3}-$realizable being $q$ a multiple of $lcm(\alpha_1,\ldots,\alpha_n)$. 
So to prove the conjecture, we would only need to prove that for any integer $m>0$: 
\[\text{if $\{m \lambda_1,\ldots,m \lambda_n\}$  is  $C_{\mathbb{Z},3}-$realizable then $\{\lambda_1,\ldots, \lambda_n\}$ is $C_{\mathbb{Z},3}-$realizable.}\]
We have not been able to prove this statement.

We also haven't been able to extend the main theorem to reals. We have tried to convert the reasoning in the proof of Theorem~\ref{avoidingtype456} to the rules of type~\eqref{move-i-guo}--\eqref{move-vi-guo} of Theorem~\ref{guo-etc} with general $\epsilon$, but we encountered technical difficulties.

\subsection{Reordering the moves} \label{reordering-moves}

As it was explained in Lemma~\ref{well_defined_swap} and the example after the Lemma, reordering moves of type 1 is troublesome. So these moves should remain fixed as much as possible.

Given a sequence of valid moves that $C^*_{\mathbb{Z},3}-$realize a certain state,  Corollary~\ref{T2_begining} tells us that we can take all the moves of type 2 to the beginning so that the moves of the whole sequence remain valid, and the final state will still be the same.

Once we put all the type 2 moves at the beginning of the sequence we obtain
\begin{align*} \label{orig-sequence-2Lambda-withT2-beginning}
    ((0),\ldots,(0)) \underbrace{\xrightarrow{T_2^{1}} \cdots \xrightarrow{T_2^{1}}}_{a_1} \underbrace{\xrightarrow{T_2^{2}} \cdots \xrightarrow{T_2^{2}}}_{a_2} \cdots  \underbrace{\xrightarrow{T_2^{n}} \cdots \xrightarrow{T_2^{n}}}_{a_n}\ \Big((a_1), (a_2), \ldots, (a_n)\Big) \longrightarrow  \cdots  \longrightarrow  \Lambda.
\end{align*}
Now we can pull towards the beginning each move of type 3 as much as possible. Note  that we can swap any two consecutive valid moves of type 3, just arguing as we did in Lemma~\ref{swapT2} with valid moves of type 2. When we have a valid move of type 1 followed by a valid move $T_3^{ij}$ of type 3 we can swap them so that both remain valid with one exception: if the type 1 move joins two tuples, one containing the $i^{th}-$position and the other one the $j^{th}-$position.

%The final structure of the sequence  after all these swaps is a new sequence of valid moves where first appears the moves of type 2, then a move $M_1$ of type 1, then any move of type 3 whose  super-indexes corresponds to two positions each one belonging  to one of the  tuples joined by $M_1$, then a move $M_2$ of type 1, then any move of type 3 whose  super-indexes corresponds to two positions each one belonging  to one of the  tuples joined by $M_2$, and so on. The final state would remain being $\Lambda$.

The final structure of the sequence  after all these swaps is a new sequence of valid moves where first appears the moves of type 2. Then a move $M_1$ of type 1 that joins for instance tuples $A_1$ and $B_1$, then any moves $T_3^{ij}$ for which the $i^{th}-$position belongs $A_1$ and the $j^{th}-$position belongs to $B_1$, or vice-versa. Then another move $M_2$ of type 1 that joins for instance tuples $A_2$ and $B_2$, then any move $T_3^{ij}$ for which the $i^{th}-$position belongs to  $A_2$ and the $j^{th}-$position belongs to $B_2$, or vice-versa. And so on. The final state would remain being $\Lambda$.

We believe that this order of moves for $C^*_{\mathbb{Z},3}-$realizable states can simplify the search of an algorithm that solves the decision problem of knowing if a given state is $C^*_{\mathbb{Z},3}-$realizable or not.

%Moreover, now that we have all the moves of type 2 at the beginning, we can also reorganize valid moves of type 3 somewhat. Namely, if $M_1,\ldots,M_n$ is the sequence of valid moves with $M_r=T_3^{ij}$  and  $M_{r-1}$ is of type 1 move whenever it does not   join  two tuples where the $i^{th}-$position is in the first and the  $j^{th}-$position is in the second. If we swap $M_{r-1}$ and $M_r$ all the moves will remain valid, and the final state is the same. 

\bigskip

{\bf Acknowledgements:} This research has been funded by the Agencia Estatal de Investigaci\'on of Spain through
Grant PID2019-106362GB-I00/AEI/10.13039/501100011033.

\end{document}